\newcommand{\N}{\ensuremath{\mathbb{N}}}
\newcommand{\Z}{\ensuremath{\mathbb{Z}}}
\DeclareMathOperator{\dd}{\mathrm{d}}
\newcommand{\F}{\ensuremath{\mathcal{F}}}
\newcommand{\I}{\ensuremath{\mathcal{I}}}
\newcommand{\n}{\ensuremath{\infty}}
\newcommand{\resp}{{\it resp. }}
\renewcommand{\phi}{\varphi}
\renewcommand{\cong}{\simeq}
\renewcommand{\resp}{{\it resp. }}
 \newtheorem{thm}{Theorem}[section]
 \newtheorem{defn}[thm]{Definition}
 \newtheorem{prop}[thm]{Proposition}
 \newtheorem{lemma}[thm]{Lemma}
 \newtheorem{rem}[thm]{Remark}
\date{\today}
\title{Kato-Milne cohomology group over rational function fields in characteristic  2, II}
\author{Ahmed Laghribi}
\author{Trisha Maiti}
\address{Univ. Artois, UR 2462, Laboratoire de Math{\'e}matiques de Lens (LML), F-62300 Lens, France}
\email{ahmed.laghribi@univ-artois.fr, trisha.maiti@univ-artois.fr}
\date{\today}
\keywords{Quadratic form, Kato-Milne cohomology, Milnor-Scharlau exact sequence.}
\subjclass{11E04, 11E81, 13N05}
\begin{document}

\maketitle
\begin{abstract} 
Our aim in this paper is to prove in the setting of Kato-Milne cohomology in characteristic $2$ an exact sequence which is analogue to  the Milnor-Scharlau  sequence 
\cite[Theorem 6.2]{Sc}.  This is an extension of the Milnor exact sequence proved in \cite{ex}. 
\end{abstract}

\tableofcontents

\begin{sloppypar}
\section{Introduction}\label{S1}

In \cite[Thoerem 6.2]{AJ} Aravire and Jacob  determined  the Witt group of  nonsingular quadratic forms $W_q( \F(x))$, where $\F$ is a field of characteristic $2$ and $x$ is an indeterminate over $\F$. To this end, they proved the analogue of the Milnor-Scharlau exact sequence \cite[Theorem 6.2]{Sc} known in characteristic not $2$. Since nonsingular quadratic forms in characteristic $2$ are related to differential forms by Kato's isomorphism (\ref{Ke}), it is natural to ask for an analogue of the Milnor-Scharlau exact sequence in the setting of Kato-Milne cohomology. This is what we will accomplish in this paper by following the same process developed in \cite{AJ}. In \cite{ex} we already proved, in the setting of Kato-Milne cohomology, the analogue of the Milnor exact sequence \cite{Mi}. So in this paper, we extend that short exact sequence using the transfers described in Definition \ref {dt}.  To state our main result, we need to recall some backgrounds on quadratic forms, differential forms and Teichm\"uller lifting.

Let $F$ be a field of characteristic $2$.  Up to isometry, any quadratic form $\phi$ over $F$  can be written up to isometry as follows:\[\phi\cong [a_1,b_1]\perp \cdots \perp [a_r, b_r]\perp \langle c_1\rangle\perp \cdots \perp \langle c_s\rangle,\] where $[a,b]$ and $\langle c\rangle$ denote the quadratic forms $aX^2+XY+bY^2$ and $\langle c\rangle$, respectively. The quadratic form  $\phi$ is said to be nonsingular if $s=0$.
 
For $a_1, \cdots, a_n\in F^*=:F\setminus\{0\}$, let $\left<a_1, \cdots, a_n\right>_b$ denote the diagonal bilinear form given as follows:$$((x_1, \cdots, x_n), (y_1, \cdots, y_n))\mapsto \sum_{i=1}^na_ix_iy_i.$$ 

Let $W(F)$ be the Witt ring of nondegenerate symmetric bilinear forms over $F$, and $W_q(F)$ the Witt group of nonsingular quadratic forms over $F$.  It is known that $W_q(F)$ is endowed with a $W(F)$-module structure induced by tensor product \cite{El}.  Let $I(F)$ be the fundamental ideal of $W(F)$. For $m\in \N_0$, let $I^m(F)$ be the $m$-th power of $I(F)$. This ideal is additively generated by the $m$-fold bilinear  Pfister forms $ \langle\langle a_1,\ldots,a_m\rangle\rangle_b:= \langle 1,a_1\rangle_b\otimes\ldots\otimes \langle 1,a_m\rangle_b$ for $a_i\in F^*$ (we take $I^0(F)=W(F)$).  Similarly, the subgroup $ I_q^{m+1}(F)=I^m(F)\otimes W_q(F)$ is additively generated by the $(m+1)$-fold quadratic Pfister forms $\langle\langle a_1,\ldots,a_m;b]]:=\langle \langle a_1,\ldots,a_m\rangle\rangle_b\otimes[1,a]$, for $a_i\in F^*$ and $b\in F$. Note that $I_q^1(F)= W_q(F)$.  The notation $\overline{I}_q^{m+1}(F)$ means the quotient $I_q^{m+1}(F)/I_q^{m+2}(F)$.

For any integer $m\geq 1$, let $\Omega_F^m$ be the space of $m$-differential forms over $F$. We also take $\Omega_F^0=F$ and $\Omega_F^m=0$ for $m<0$.  Let  $ H_2^{m+1}(F)$ be the  Kato-Milne cohomology group of degree $m$, defined by \[ H_2^{m+1}(F)= \Omega_F^m/(\dd \Omega_F^{m-1}+ \wp(\Omega_F^m)),\] where $\dd$ and $\wp$ denote the differential and the Artin-Schreier operators, respectively. We refer to \cite[Introduction]{ex} for definitions. For each  $m\in \N$, we have Kato's isomorphism $f(F)_{m+1}: H_2^{m+1}(F)\xrightarrow{} \overline{I}_q^{m+1}(F)$ defined by 
  \begin{equation}\label{Ke}
      f(F)_{m+1}\left(\overline{x\frac{\dd x_1}{x_1}\wedge \ldots\wedge \frac{\dd x_m}{x_m}}\right)=\overline{\langle\langle x_1,\ldots,x_m;x]]}.
  \end{equation}
  Let  $v_p: F\xrightarrow{}\Z$ be a  discrete valuation of $F$ with $p$ as  an uniformizer.  We use  $\overline{F}_p$ and $F_p$ to denote  the residue field and the completion of $F$, respectively. Now we define the group $W_1(H_2^{m+1}(F_p))$ as the cokernel\[\text{coker} (\alpha': H_2^{m+1}(\overline{F}_p)\xrightarrow{} H_2^{m+1}(F_p)),\] where $\alpha'$ is  naturally induced by a Teichm\"uller lifting\footnote{We refer to  \cite[Appendix]{ex} for an explicit description of  Teichm\"uller lifting.} $\alpha: \overline{F}_p\xrightarrow{} F_p$.  Moreover, the group $W_1(H_2^{m+1}(F_p))$ is independent from the choice of  a Teichm\"uller lifting \cite[Corollary 2.6]{ex} .   The inclusion map $ F\xrightarrow{} F_p$ induces a homomorphism $ \beta_1:H_2^{m+1}(F)\xrightarrow{} H_2^{m+1}(F_p)$. Let   $\beta_2: H_2^{m+1}(F_p)\xrightarrow{} W_1(H_2^{m+1}(F_p))$ be  the homomorphism induced by projection.  Then, we have a (residue) map $\partial_p: H_2^{m+1}(F)\xrightarrow{} W_1(H_2^{m+1}(F_p))$ defined by $$\partial_p=\beta_2\circ \beta_1.$$ 

Now consider $F=\F(x)$ the rational function field in one variable $x$ over a field $\F$ of characteristic $2$.  Let $p\in \F[x]$ be a monic irreducible polynomial or $p=\frac{1}{x}$.  Let $F_p$ (\resp $\overline{F}_p$) denote the completion (\resp the residue field) of $F$ with respect to the $p$-adic valuation.  By  \cite[Theorem 4.2]{ex},  $H_2^m(\overline{F_p})$ is isomorphic to a quotient group of $W_1(H_2^{m+1}(F_p))$.  Moreover, the Scharlau's transfer $s_p': W_q(\overline{F_p})\xrightarrow{} W_q(\F)$, along with Kato's isomorphism (\ref{Ke}) induce  the homomorphism $s_p'': H_2^{m}(\overline{F_p})\xrightarrow{} H_2^m(\F)$. Consequently, we have the transfer map $s_p^*: W_1(H_2^{m+1}(F_p))\xrightarrow{} H_2^m(\F)$, which is the composition map $ W_1(H_2^{m+1}(F_p))\xrightarrow{} H_2^m(\overline{F_p})\xrightarrow{} H_2^m(\F)$ (we refer to Definition \ref{dt} for details).  Our main result is 
 the following theorem:
 \vspace{2mm}
 
\begin{thm}
    Let $\F$ be a field of characteristic $2$ and $F=\F(x)$ the rational function field in one variable $x$ over $\F$. Then, the following sequence in  exact:
    \[ 0\xrightarrow{} H_2^{m+1}(\F)\xrightarrow{\textcolor{black}{i}} H_2^{m+1}(F)\xrightarrow{ \bigoplus \partial_p} \bigoplus_{p} W_1(H_2^{m+1}(F_p))\xrightarrow{\bigoplus s^*_p} H_2^m(\F)\xrightarrow{}0, \]
     \textcolor{black}{ where i} is induced by the inclusion $\F\xrightarrow{} F$ and $p$ varies over $\frac{1}{x}$ and all monic irreducible polynomials  of $ \F[x]$. 
\end{thm}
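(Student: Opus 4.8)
The plan is to build on the Milnor exact sequence of \cite{ex}, which treats the finite places (the monic irreducible polynomials $p\in\F[x]$) and already provides the short exact sequence
\[ 0\xrightarrow{} H_2^{m+1}(\F)\xrightarrow{i} H_2^{m+1}(F)\xrightarrow{\bigoplus_{p\neq 1/x}\partial_p}\bigoplus_{p\neq 1/x}W_1(H_2^{m+1}(F_p))\xrightarrow{}0. \]
Relative to this, the content of the theorem is to adjoin the place $p=1/x$ at infinity together with the transfer maps $s_p^*$ and to show that the resulting four-term sequence stays exact. Everything rests on two pillars: a reciprocity law $\sum_p s_p^*\circ\partial_p=0$ (sum over all $p$, including $1/x$), and the good behaviour of the degree-one place at infinity. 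I would organise the four exactness statements around these.

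First I would establish reciprocity. Since $\partial_p$ corresponds under Kato's isomorphism (\ref{Ke}) to the second residue on nonsingular quadratic forms and $s_p^*$ is induced by the Scharlau transfer $s_p'$, the relation $\sum_p s_p^*(\partial_p(\xi))=0$ for every $\xi\in H_2^{m+1}(F)$ is the cohomological shadow of Scharlau's reciprocity law used in \cite{AJ}; I would transport it through (\ref{Ke}), the one nontrivial point being to check that (\ref{Ke}) intertwines the differential-form residues with the quadratic second residues and the degree drop from $m+1$ to $m$. Reciprocity yields at once the inclusion $\operatorname{im}(\bigoplus\partial_p)\subseteq\ker(\bigoplus s_p^*)$.

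Next I would dispose of the two outer exactness statements together with surjectivity. Injectivity of $i$ and exactness at $H_2^{m+1}(F)$ follow from the displayed Milnor sequence once I note that a class in $i(H_2^{m+1}(\F))$ is unramified at $1/x$, so that adjoining $\partial_{1/x}$ does not shrink the kernel below $i(H_2^{m+1}(\F))$. Surjectivity of $\bigoplus s_p^*$ I would obtain from a single degree-one place, for instance $p=x$: there $\overline{F}_p=\F$, the transfer $H_2^m(\overline{F}_p)\xrightarrow{}H_2^m(\F)$ is the identity map, and $W_1(H_2^{m+1}(F_p))\xrightarrow{}H_2^m(\overline{F}_p)$ is surjective by \cite[Theorem 4.2]{ex}, so $s_x^*$ alone already hits all of $H_2^m(\F)$.

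The heart of the matter is exactness at $\bigoplus_p W_1(H_2^{m+1}(F_p))$. Using that the finite residues are surjective, the image of the full residue map equals the graph of a map $\delta\colon\bigoplus_{p\neq 1/x}W_1(H_2^{m+1}(F_p))\xrightarrow{}W_1(H_2^{m+1}(F_{1/x}))$ (the infinite residue descends to the quotient by $i(H_2^{m+1}(\F))$ because constants are unramified at $1/x$). Chasing an element $(\xi_p)_p\in\ker(\bigoplus s_p^*)$, reciprocity pins down its finite part and forces the remaining infinite component into $\ker(s_{1/x}^*)$; the sequence therefore closes up precisely when $s_{1/x}^*$ is injective, i.e. when the surjection $W_1(H_2^{m+1}(F_{1/x}))\xrightarrow{}H_2^m(\overline{F}_{1/x})$ of \cite[Theorem 4.2]{ex} has trivial kernel at the degree-one place $1/x$. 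I expect this to be the main obstacle: it is the genuinely characteristic-$2$ phenomenon, the possible gap between $W_1(H_2^{m+1}(F_p))$ and $H_2^m(\overline{F}_p)$, and showing that this gap vanishes at the infinite place (or, more robustly, that whatever survives in it already lies in the image of the residues) is where the real work, following the computations of \cite{AJ}, will be concentrated.
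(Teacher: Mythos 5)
Your skeleton matches the paper's: reciprocity plus the Milnor sequence of \cite{ex} plus an analysis at the place $\frac{1}{x}$, and your final diagram chase (lift the finite part by surjectivity of the finite residues, use reciprocity to push the discrepancy into $\ker(s_{1/x}^*)$, then absorb it) is essentially the paper's proof of $\ker(\oplus s_p^*)\subseteq{\rm Im}(\oplus\partial_p)$. But there are two genuine gaps. The first is that you treat the reciprocity law as a transport of Scharlau's reciprocity from \cite{AJ} through Kato's isomorphism, with ``the one nontrivial point'' being a compatibility check. This presupposes that the transfer maps $s_p^*$ on $W_1(H_2^{m+1}(F_p))$ are already defined and agree with the transported Scharlau transfers, which is where almost all of the work actually sits: \cite{AJ} works with the full Witt group $W_q$, whereas here everything lives in the graded pieces $\overline{I}_q^{m+1}\cong H_2^{m+1}$, so one must first show the transfer respects the filtration ($t_p'(I_q^m(\F(p)))\subseteq I_q^m(\F)$, Lemma \ref{L41}, which requires an explicit computation for inseparable $p$), and one must define $s_p^*$ on the cokernel $W_1(H_2^{m+1}(F_p))$ at all. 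For inseparable $p$ the group $W_1(H_2^{m+1}(F_p))$ is strictly larger than $H_2^m(\overline{F}_p)\wedge\overline{\frac{\dd p}{p}}$, and $s_p^*$ carries an extra component (Definition \ref{dt}(ii)) built from the decomposition of Theorem \ref{T23} and the subgroups $U_p$, $U_p^0$; this component is not the image of anything in \cite{AJ} under Kato's isomorphism in any obvious way, and the reciprocity law for it (Theorem \ref{T51}, cases (iii) and (iv)) is verified by direct computation on generators. So the reciprocity law cannot be reduced to a transport argument without essentially redoing Sections 2--4 of the paper.

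The second gap is your claim that exactness at $\bigoplus_p W_1(H_2^{m+1}(F_p))$ ``closes up precisely when $s_{1/x}^*$ is injective.'' It is not injective: $s_{1/x}^*=\eta$, and by \cite[Theorem 4.9]{ex} its kernel is the image of $\partial_{1/x}$ restricted to $L_0$, which is large (it contains the images of all the subgroups $S'_{1/x,r}$). The gap between $W_1(H_2^{m+1}(F_{1/x}))$ and $H_2^m(\F)$ does not vanish at the infinite place. What saves the argument --- and what the paper actually uses --- is your parenthetical alternative: every element of $\ker(\eta)$ is $\partial_{1/x}(\phi')$ for some $\phi'\in L_0$, and elements of $L_0$ have trivial residues at every finite place, so correcting by $\phi'$ does not disturb the finite components. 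You should promote that parenthesis to the main line of the argument and discard the injectivity claim; as stated, the reduction to ``trivial kernel at $\frac{1}{x}$'' would fail.
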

\medskip

Now we  briefly summarize the proof of this theorem, \textcolor{black}{for which we will adapt some arguments from \cite{AJ}}. By \cite[Theorems 4.9, 4.10]{ex}, it  suffices to prove that \textcolor{black}{${\rm Im}(\oplus\partial_p)=  {\rm ker}(\oplus s^*_p)$}.  In Section \ref{S2}, we prove  a decomposition of $W_1(H_2^{m+1}(F_p))$ which will be needed in Section \ref{S3} to describe the transfer maps $s_p^*$. In Section \ref{S4}, 
we prove that $\oplus (s_q^*\circ \partial_q)$ vanishes on $ L_0+ L_0\wedge \overline{\frac{\dd p}{p}}$ for any fixed monic irreducible polynomial $p$. In fact, the map $ \oplus (s_q^*\circ  \partial_q)$ vanishes on $ H_2^{m+1}(\F(x))$ \textcolor{black}{(Theorem \ref{T51})},  which  proves that \textcolor{black}{${\rm Im}(\oplus \partial_q) \subset {\rm ker}(\oplus s^*_q)$}. Then, by some calculations we get  the other inclusion, which completes the proof of the theorem.
  
\section{A decomposition  of $W_1(H_2^{m+1}(F_p))$ }\label{S2}
  Let  $\F$ be a field of characteristic $2$.  A subset $\{t_i\hspace{1mm}|\hspace{1mm} i\in \I\}\subset \F$ is said to be a 
$2$-basis of $\F$ if $ \{\prod_{i\in \I}t_i^{\epsilon_i} \hspace{1mm} |\hspace{1mm} \epsilon_i\in\{0,1\}$ and $\epsilon_i=0$ for almost all $i\in \I\}$ is  an $\F^2$-basis of $\F$, where $\I$ is an ordered set.   

When $F=\F(x)$ and $p$ a monic  irreducible polynomial, the group $W_1(H_2^{m+1}(F_p))$ is given in  \cite[Theorem 4.2]{ex}.
 In this section we prove another  decomposition of $W_1(H_2^{m+1}(F_p))$, which will be needed in  Section \ref{S3} to introduce  the transfer maps $s_p^*$ that we talked about before.  

Let   $L_p$ be a   field of characteristic $2$, complete  with respect to a discrete valuation   $v_p:L_p\xrightarrow{} \Z$ and $p$ an uniformizer. Let $\overline{L}_p$ denote the residue field of $L_p$, and $ \overline{A}_p=\{s_i\hspace{1mm}|\hspace{1mm} i\in \I'\}$ a $2$-basis of $\overline{L}_p$, where $\I'$ is an ordered set.  Set  $ \I=\I'\cup \{\n\}$, where $s_{\n}$ denotes $p$. Now we extend the ordering of  $\I'$ to an ordering of $\I$ by taking $i<\infty$ for all $i\in \I'$.   Since $L_p$ is complete, we may consider $\overline{L}_p\subset L_p$ and  ${A}_p= \{s_i\hspace{1mm}|\hspace{1mm} i\in \I\}$ as  a $2$-basis of $L_p$.  
 
We define two groups  $ \overline{\mathcal{T}}_p= \bigoplus\limits_{|\I'|}(\Z/{2\Z})$ and ${\mathcal{T}}_p=\bigoplus\limits_{|\I|}(\Z/{2\Z})$.   Now $\mathcal{T}_p$ has \label{lp}lexicographically ordering with respect to $0<1$.  Namely, for any $I=(I_i)_{i\in \I}$ and $J=(J_i)_{i\in\I}\in  \mathcal{T}_p$ we have $I<J$ if there exists an element $k\in \I$ such that $I_k<J_k$ and $I_i=J_i$ for all $i>k$ {(we consider the ordering from right to left)}. Since $\overline{\mathcal{T}}_p$ can be considered  as a subgroup of  ${\mathcal{T}}_p$, the group $\overline{\mathcal{T}}_p$ is also lexicographically ordered. Let $L_p'=\{ l\in L_p \hspace{1mm}|\hspace{1mm} v_p(l)=0\}$ and for $I=(I_i)_{i\in \I}\in {{\mathcal{T}}_p}$,  let take $\tilde{I}= \{i\in \I\hspace{1mm}|\hspace{1mm}  I_i\neq 0\}$. For each  $m\in \N$, we define $ (\mathcal{T}_p)_m=\{ I\in {\mathcal{T}}_p\hspace{1mm}|\hspace{1mm} |\tilde{I}|=m \}$. Similarly, we define $(\overline{\mathcal{T}}_p)_m$.
For any positive integer $m$ and $I\in ({\mathcal{T}}_p)_m$, let $\langle\langle  s_{{I}}\rangle\rangle$   denote the $m$-fold bilinear Pfister form $ \bigotimes\limits_{i\in \tilde{I}} \langle\langle s_i\rangle\rangle_b$. For $J\in {\mathcal{T}}_p$, let $s^J$ denote the element  $\prod\limits_{i\in \tilde{J}}s_i$ of $L_p$. Now for each $m\in \N$, we have   \begin{equation*} ({\mathcal{T}}_p)_m\setminus (\overline{\mathcal{T}}_p)_m:=\{ I\in {\mathcal{T}}_p\hspace{1mm}|\hspace{1mm} |\tilde{I}|=m\hspace{1mm} \text{and}\hspace{1mm} I_{\n}=1\},\end{equation*} that is $p$ appears in $s^I$ for $I\in  (\mathcal{T}_p)_m\setminus (\overline{\mathcal{T}}_p)_m$. For any element $l\in L_p'$, let $\overline{l}$ denote the unique element of $\overline{L}_p$  such that $l\equiv \overline{l}\pmod{p}$. 
\begin{rem}\label{R21}
{Let us note that if $a\in L_p$ satisfies $v_p(a)>0$, then $a\in \wp(L_p):=\{x^2+x\mid x\in L_p\}$ (this is a direct application of the Hensel lemma).}
\end{rem}
Using these notations, our next result is the following proposition.

 \begin{prop}\label{P22}
      Let $p$, $L_p$, $L_p'$, $ \overline{L}_p$, $\mathcal{T}_p$, $\overline{\mathcal{T}}_p$ be as defined above. Suppose that the quadratic form  \[{Q}= \sum_{K\in ({\mathcal{T}}_p)_m\setminus (\overline{\mathcal{T}}_p)_m} \langle\langle s_{{K}}\rangle\rangle\otimes [1,a_{{K}}]+\sum_{L\in(\overline{\mathcal{T}}_p)_m}\langle\langle s_{{L}}\rangle\rangle\otimes[1,a_{{L}}] \in I_q^{m+2}(L_p),\] where
\begin{itemize} 
\item For $K\in ({\mathcal{T}}_p)_m\setminus (\overline{\mathcal{T}}_p)_m$, we have ${a_{{K}}= \sum\limits_{\{J\in \overline{\mathcal{T}}_p\hspace{1mm}\mid\hspace{1mm} J+K> K\}}\sum\limits_{r\geq 1}\frac{\overline{s^Jv_{r,K,J}^2}}{p^{2r}}}$ for 
$v_{r,K,J}\in L_p'$.
\item For $L\in (\overline{\mathcal{T}}_p)_m$, we have$$a_{{L}}=\sum\limits_{\{J\in \mathcal{T}_p\hspace{1mm}|\hspace{1mm} s^J=s^{J'}p\}} \sum\limits_{r\geq 0}\frac{\overline{ s^{J'} u_{r,L,J}^2}} {p^{2r+1}}+\sum\limits_{\{J\in \overline{\mathcal{T}}_p\mid J+L> L\}}\sum\limits_{r\geq 1}\frac{\overline{s^Jv_{r,L,J}^2}}{p^{2r}}$$for $ u_{r,L,J}$ and $v_{r,L,J}\in L_p'$.
\end{itemize}
Moreover,  $u_{r,I,J}=v_{r,I,J}=0$ for {almost} all $r,I,J$. Then, each $ a_{{I}}=0\in L_p$.  Consequently, $u_{r,I,J}=v_{r,I,J}=0$ for all $r,I,J$.
     
 \end{prop}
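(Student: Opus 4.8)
The plan is to translate the statement into Kato--Milne cohomology by means of Kato's isomorphism (\ref{Ke}) and then to reduce it to a linear--independence assertion inside $H_2^{m+1}(L_p)$. First I would note that each summand $\langle\langle s_K\rangle\rangle\otimes[1,a_K]$ (\resp $\langle\langle s_L\rangle\rangle\otimes[1,a_L]$) is an $(m+1)$-fold quadratic Pfister form, so $Q\in I_q^{m+1}(L_p)$, and that under the inverse of $f(L_p)_{m+1}$ the class $\overline{Q}\in\overline{I}_q^{m+1}(L_p)$ corresponds to the $m$-form
\[
\omega=\sum_{I\in(\mathcal{T}_p)_m} a_{I}\,\frac{\dd s_{I}}{s_{I}}\in\Omega_{L_p}^m,\qquad \frac{\dd s_{I}}{s_{I}}:=\frac{\dd s_{i_1}}{s_{i_1}}\wedge\cdots\wedge\frac{\dd s_{i_m}}{s_{i_m}},
\]
with $\tilde I=\{i_1<\cdots<i_m\}$. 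Since $f(L_p)_{m+1}$ is an isomorphism, the hypothesis $Q\in I_q^{m+2}(L_p)$ is equivalent to $\omega=0$ in $H_2^{m+1}(L_p)$, that is to $\omega\in\dd\Omega_{L_p}^{m-1}+\wp(\Omega_{L_p}^m)$. As the $u$'s and $v$'s vanish for almost all indices, $\omega$ is a \emph{finite} $L_p$-combination of the forms $\frac{\dd s_{I}}{s_{I}}$, which constitute an $L_p$-basis of $\Omega_{L_p}^m$ indexed by $(\mathcal{T}_p)_m$. Hence it will suffice to prove that this particular $\omega$ is already $0$ in $\Omega_{L_p}^m$, for then each $a_{I}=0$ by $L_p$-linear independence of the $\frac{\dd s_{I}}{s_{I}}$.

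To establish that, I would invoke the standard $2$-basis normal form for Kato--Milne cohomology attached to $A_p=\{s_i\mid i\in\I\}$: recall that $\{s^J\mid J\in\mathcal{T}_p\}$ is an $L_p^2$-basis of $L_p$, and that there are exactly two reductions available on $\Omega_{L_p}^m$, namely the Artin--Schreier relation $x^2\,\frac{\dd s_{I}}{s_{I}}\equiv x\,\frac{\dd s_{I}}{s_{I}}$ coming from $\wp$, and the exactness relations coming from $\dd$. Completeness enters through Remark \ref{R21}: any coefficient of positive $p$-valuation lies in $\wp(L_p)$ and may be discarded, so only principal parts with strictly negative powers of $p$ survive, which is exactly the shape of the $a_{I}$. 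Because $\wp$ squares coefficients it doubles $p$-valuations; thus the part of $\omega$ carried by even powers $p^{-2r}$ is governed by $\wp$, whereas the odd powers $p^{-(2r+1)}$ can only be produced by $\dd$. Matching these two mechanisms against the $2$-basis, one checks that the monomials $\overline{s^Jv^2}\,p^{-2r}\frac{\dd s_{I}}{s_{I}}$ (with $J\in\overline{\mathcal{T}}_p$, $J+I>I$) and $\overline{s^{J'}u^2}\,p^{-(2r+1)}\frac{\dd s_{I}}{s_{I}}$ (with $s^J=s^{J'}p$) occurring in the $a_{I}$ are pairwise distinct \emph{reduced} (admissible) monomials to which no further $\dd$- or $\wp$-reduction applies; the conditions $J+I>I$ and $s^J=s^{J'}p$, together with the parity of the $p$-power, are precisely what guarantees this. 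The $L_p^2$-linear independence of admissible monomials in $H_2^{m+1}(L_p)$ then makes the projection $\Omega_{L_p}^m\to H_2^{m+1}(L_p)$ injective on the span of these special forms, and so $\omega\in\dd\Omega_{L_p}^{m-1}+\wp(\Omega_{L_p}^m)$ forces $\omega=0$ in $\Omega_{L_p}^m$, i.e. every $a_{I}=0$.

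It then remains to deduce $u_{r,I,J}=v_{r,I,J}=0$, which is elementary linear algebra over the $L_p^2$-basis. For fixed $I$, grouping the terms of $a_{I}$ and using that in characteristic $2$ the Frobenius is additive, I would write in the even case $\sum_r\overline{s^Jv_{r,I,J}^2}\,p^{-2r}=s^J\big(\sum_r\overline{v_{r,I,J}}\,p^{-r}\big)^2$ (recall $s^J\in\overline{L}_p\subset L_p$ for $J\in\overline{\mathcal{T}}_p$), and in the odd case $\overline{s^{J'}u_{r,I,J}^2}\,p^{-(2r+1)}=s^{J}\big(\overline{u_{r,I,J}}\,p^{-r-1}\big)^2$ with $s^J=s^{J'}p$; thus each term lies in a distinct $L_p^2$-basis direction $s^J$ (with $\n\notin\tilde J$, \resp $\n\in\tilde J$). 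Hence $a_{I}=0$ forces each $\big(\sum_r\overline{v_{r,I,J}}\,p^{-r}\big)^2=0$ and each $\big(\overline{u_{r,I,J}}\,p^{-r-1}\big)^2=0$; by uniqueness of the $p$-adic expansion every $\overline{u_{r,I,J}}$ and $\overline{v_{r,I,J}}$ vanishes, and since $u_{r,I,J},v_{r,I,J}\in L_p'$ are units unless they are $0$, we conclude $u_{r,I,J}=v_{r,I,J}=0$ for all $r,I,J$.

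The genuine difficulty is concentrated in the second paragraph: verifying, through careful bookkeeping with the $2$-basis, that the listed combinatorial conditions single out genuine reduced basis monomials of $H_2^{m+1}(L_p)$ with no hidden cancellation between the $\dd$- and $\wp$-contributions. Concretely, the delicate point is the exact compatibility of the even/odd parity conventions ($p^{2r}$ versus $p^{2r+1}$) with the lexicographic constraints $J+I>I$ and $s^J=s^{J'}p$, since it is this compatibility that guarantees the required $L_p^2$-linear independence in cohomology.
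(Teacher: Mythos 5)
Your reduction of the statement to differential forms via Kato's isomorphism is correct, and your final paragraph (deducing $u_{r,I,J}=v_{r,I,J}=0$ from $a_I=0$ using the $L_p^2$-basis $\{s^J\}$ of $L_p$ and the uniqueness of $p$-adic expansions) is sound. The problem is that the entire mathematical content of the proposition sits in your second paragraph, and there it is asserted rather than proved. The claim that the monomials $\overline{s^Jv^2}\,p^{-2r}\frac{\dd s_I}{s_I}$ and $\overline{s^{J'}u^2}\,p^{-(2r+1)}\frac{\dd s_I}{s_I}$, subject to the constraints $J+I>I$ and $s^J=s^{J'}p$, are ``reduced monomials to which no further $\dd$- or $\wp$-reduction applies,'' and that such monomials are linearly independent modulo $\dd\Omega_{L_p}^{m-1}+\wp(\Omega_{L_p}^m)$, is exactly the statement to be established; ``one checks'' and ``matching these two mechanisms against the $2$-basis'' is not an argument. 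In particular, the heuristic that even powers of $p^{-1}$ are governed by $\wp$ while odd powers can only come from $\dd$ does not by itself exclude cancellation: $\wp(x)=x^2+x$ contributes terms of both parities of valuation, and the relations coming from $\dd$ (Leibniz rule plus rewriting coefficients along the $2$-basis) mix the two mechanisms. You flag this yourself as ``the delicate point,'' which is an honest admission that the proof is incomplete precisely where the proposition has content.

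For comparison, the paper does not reprove this independence from scratch either; instead it rewrites each $a_I$ into the normal form $\sum_{J+I>I}s^Jb_{I,J}^2$ with $b_{I,J}\in p^{-1}\overline{L}_p[p^{-1}]$ (using $\overline{s^Jx^2}=s^J(\overline{x})^2$ and the Teichm\"uller embedding $\overline{L}_p\subset L_p$, and observing that the odd-valuation terms $A_{I,J}$ occur only for $I\in(\overline{\mathcal{T}}_p)_m$ and $J\in\mathcal{T}_p\setminus\overline{\mathcal{T}}_p$, in which case $J+I>I$ is automatic), and then invokes \cite[Proposition 1.5]{AJ}, which is precisely the quadratic-form version of the independence statement you need. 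To repair your proof you must either supply the valuation/filtration argument behind that independence or cite a result that contains it; as written, the step from $\omega=0$ in $H_2^{m+1}(L_p)$ to $\omega=0$ in $\Omega_{L_p}^m$ is a gap, not a proof.
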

 \begin{proof} Let $R= \overline{L}_p[p^{-1}]$. Now $ \overline{L}_p\cong \alpha(\overline{L}_p)\subset L_p$, where $\alpha: \overline{L}_p\xrightarrow{} L_p$ is a\break Teichm\"uller lifting. Then $ R\cong \alpha(\overline{L}_p)[p^{-1}]\subset L_p$. Now we prove that  each $a_I$ is an  element  of the form $\sum\limits_{J\in \mathcal{T}_p,\,\textcolor{black}{J +I> I}} s^Jb^2_{I,J}\in R$, where  $b_{I,J}\in p^{-1}R$.   For any $I\in (\mathcal{T}_p)_m$, $a_I$ can be written as follows:
 \begin{equation*}
     a_I=\sum_{J\in \mathcal{T}_p\setminus \overline{\mathcal{T}}_p} A_{I,J}+ \sum_{J\in \overline{\mathcal{T}}_p,\, J+I>I} B_{I,J},
 \end{equation*} where $ A_{I,J}= \sum\limits _{r\geq 0} \frac{\overline{ s^{J'} u^2_{r,I,J}}}{p^{2r+1}}$ such that $ s^J= s^{J'}p$ and \textcolor{black}{ $ B_{I,J}= \sum\limits_{r\geq 1} \frac{\overline{s^J v^2_{r,I,J}}}{p^{2r}}$} for $ u_{r,I,J}$ and $v_{r,I,J}\in L_p'$.  \textcolor{black}{ If $J\in \overline{\mathcal{T}}_p$ and $x\in L_p'$, then we have  \begin{equation}\label{ep2}\overline{ s^J x^2}= s^J (\overline{x})^2\in \overline{L}_p.\end{equation}  Therefore,   $ B_{I,J}= \sum\limits_{r\geq 1} \frac{\overline{s^J v^2_{r,I,J}}}{p^{2r}}= \sum\limits_{r\geq 1} \frac{s^J (\overline{v_{r,I,J}})^2}{p^{2r}}= s^J b^2_{I,J}$, where $b_{I,J}= \sum\limits_{r\geq 1} \frac{\overline{v_{r,I,J}}}{p^r}\in p^{-1}R$. Thus, $B_{I,J}$ is in our required form.}   Now  $A_{I,J}$ can be expressed as follows:     \begin{align*}
      A_{I,J}&= \sum_{r\geq 0} \frac{\overline{s^{J'} u^2_{r,I,J}}}{p^{2r+1}}\quad (\text{where } s^J=  s^{J'}p)\\&= \sum_{r\geq 0} \frac{p(\overline{ s^{J'} u^2_{r,I,J}})}{p^{2r+2}} \\&\textcolor{black}{= \sum_{r\geq 0}\frac{  p s^{J'} (\overline{u_{r,I,J}})^2}{p^{2r+2}} \quad (\text{by (\ref{ep2})})}\\&= s^J b_{I,J}^2\in R,\quad (\text{where } b_{I,J}=\sum_{r\geq 0} \frac{\overline{u_{r,I,J}}}{p^{r+1}}\in p^{-1}R).
 \end{align*}  Note that  if $ I\in (\mathcal{T}_p)_m\setminus (\overline{\mathcal{T}}_p)_m$, then  $A_{I,J}=0$  for all \textcolor{black}{$ J\in \mathcal{T}_p\setminus \overline{\mathcal{T}}_p$}.  Therefore, $ A_{I,J}$ is nonzero only if $ I\in (\overline{\mathcal{T}}_p)_m$ and $J\in \mathcal{T}_p\setminus \overline{\mathcal{T}}_p$, and  in this case $J+I>I$. Hence, each $a_I$ belongs to $\sum\limits_{J\in \mathcal{T}_p,\, J+I>I} s^J (p^{-1} R)^2$.   Now \cite[Proposition 1.5]{AJ} implies that each $a_I=0\in L_p$. Thus, the proposition is proved. 
\end{proof}
\medskip
 
\textcolor{black}{ Let $\F$ be a field of characteristic $2$ with the  fixed $2$-basis  ${B}:= \{ t_i\hspace{1mm}|\hspace{1mm} i\in \I\}$ and $F=\F(x)$.   Let $p\in \F[x]$ be a monic irreducible polynomial of degree $d$ or $p=\frac{1}{x}$. Let us recall some notations from  \cite{ex}, which will be used repeatedly in the sequel. }
 \medskip 
  
 \textcolor{black}{ \noindent{\textbf{Notations:}} (1) For $d\geq 1$,  we define the sets $ \F[x]_{\leq d}:=\{ f(x)\in \F[x]\mid \deg f(x)\leq d\}$ and $ \F[x]_{<d}:=\{ f(x)\in \F[x]\mid \deg f(x)<d\}$.}
\medskip

\textcolor{black}{\noindent{(2)}  We denote by $F_p$ and $\overline{F}_p$ the completion  and the residue field of $F$ with respect to  the $p$-adic valuation, respectively. Recall that  we can see $\overline{F}_p$
  as a subfield of $F_p$.}
\medskip

\textcolor{black}{\noindent{(3)} Let $p$ be separable or $p=\frac{1}{x}$. Then, by \cite[Lemma 3.1]{AJ}, $B$ is also a $2$-basis of $\overline{F}_p$. Consequently, $B\cup\{p\}$ is a $2$-basis of $F_p$.}
\medskip

\textcolor{black}{\noindent{(4)} If $p$ is inseparable, then there exists some $i'\in \I$ such that  $t_{i'}\in \F(p)^2(B\setminus\{t_{i'}\})$. Then, $  (B\setminus  \{t_{i'}\})\cup\{\overline{x}\}$ is a $2$-basis of $\overline{F}_p$.  Therefore, $( B\setminus \{t_{i'}\})\cup \{p,x\}$ is a $2$-basis of $F_p$.}
\medskip
  
  \textcolor{black}{From now on, these fixed $2$-bases will be used for $\overline{F}_p$ and $F_p$ depending on the separability of $p$. }
 
{Let the group $T=\bigoplus_{|\I|} (\Z/2\Z)$ endowed with the lexicographic} ordering as defined in page \pageref{lp}.  For $I=(I_i)_{i\in \I}\in T$, we define  $\tilde{I}= \{ i\in \I \mid  I_i\neq 0\}$ and $ (T)_m=\{ I\in T\hspace{1mm} |\hspace{1mm} |\tilde{I}|=m\}$ {for any} $m\geq 1$. Moreover,   $ t^I$ denotes the element $ \prod_{i\in \tilde{I}}t_i\in \F$.   Hence $\{ t^I\hspace{1mm}|\hspace{1mm} I\in T\}$ is an $\F^2$-basis of $\F$.      Similarly,  we  define $\overline{T}_p=\oplus_{|\I_1|} (\Z/2\Z)$ and $T_p=\oplus_{|\I_2|} (\Z/2\Z)$ for two ordered sets $\I_1$ and $\I_2$.  If $p$ is separable, then $\I_1= \I$ and $\I_2=\I\cup\{\infty\}$, where $i<\infty$ for all $i\in \I$ and  $t_{\infty}=p$. If $p$ is inseparable, then $\I_1=\I\setminus \{i'\}\cup\{i_x\}$ and $ \I_2=\I_1\cup\{\infty\}$, where $i<i_x<\infty$ for all $i\in \I\setminus\{i'\}$ and $t_{i_x}=x$, $t_{\infty}=p$. \textcolor{black}{Here  $T_p$ and $\overline{T}_p$ are also lexicographically ordered}. For  $m\in \N$, we define $ (T_p)_m=\{I\in T_p\hspace{1mm}|\hspace{1mm} |\tilde{I}|=m\}$ and  similarly the set $(\overline{T}_p)_m$.  Note that $\{ t^I\hspace{1mm} |\hspace{1mm} I\in \overline{T}_p\}$ is an  $\F(p)^2$-basis of $\F(p)$ and $\{t^I\hspace{1mm} |\hspace{1mm}  I\in T_p\}$ is  an $F_p^2$-basis of $F_p$ for each $p$.  For $I\in ( T_p)_m$, ${\frac{\dd t_I}{t_I}}$ denotes the element $ {\frac{\dd t_{i_1}}{t_{i_1}}\wedge \ldots\wedge \frac{\dd t_{i_m}}{t_{i_m}}}$, where $\tilde{I}=\{i_1,\ldots,i_m\}$ with $i_1<i_2<\ldots<i_m$. Now $ T,\overline{T}_p$ can be considered as  subgroups of $T_p$, and for each $m\in \N$ we have: 

 \begin{equation*} (T_p)_m\setminus (\overline{T}_p)_m:=\{I\in T_p\mid |\tilde{I}|=m \;\text{and}\; I_{\infty}=1\}.\end{equation*}
 For any $g\in \F[x]$, {let $\overline{g}$ denote} the unique polynomial of degree {$\leq d-1$} such that $g\equiv \overline{g}\pmod{p}$, {where $d=\deg p$}. Using  these notations and Proposition \ref{P22}, we derive the following decomposition of $W_1(H_2^{m+1}(F_p))$. 
 \vspace{2mm}
 
\begin{thm}\label{T23}
Let $p\in \F[x]$ be a monic irreducible polynomial of degree $d\geq 1$. Then, any element $\phi\in W_1(H_2^{m+1}(F_p))$ can be expressed as follows:$$\phi= \sum\limits_{K\in ({T}_p)_m\setminus (\overline{T}_p)_m}\overline{ a_K\frac{\dd t_K}{t_K}}+ \sum\limits_{L\in (\overline{T}_p)_m} \overline{a_L \frac{\dd t_L}{t_L}}+ \phi'\wedge\overline{\frac{\dd p}{p}},$$where
\begin{itemize}
\item $\phi'\in H_2^{m}(\overline{F}_p)$,
\item For all $K\in ({T}_p)_m\setminus (\overline{T}_p)_m$, we have $a_{{K}}= \sum\limits_{\{J\in \overline{T}_p\mid J+K> K\}}\sum\limits_{r\geq 1}\frac{\overline{t^Jv_{r,K,J}^2}}{p^{2r}}$ for $ v_{r,K,J}\in \F[x]_{< d}$,
\item For $L\in (\overline{T}_p)_m$, we have$$a_{{L}}=\sum\limits_{\{J\in {T}_p\mid t^J=t^{J'}p\}} \sum\limits_{r\geq 0}\frac{\overline{ t^{J'} u_{r,L,J}^2}} {p^{2r+1}}+\sum\limits_{\{J\in \overline{T}_p \mid J+L> L\}}\sum\limits_{r\geq 1}\frac{\overline{t^Jv_{r,L,J}^2}}{p^{2r}},$$for $u_{r,L,J}, v_{r,L,J}\in \F[x]_{< d}$,  
\end{itemize}
such that $u_{r,I,J}=v_{r,I,J}=0$ for {almost} all $r,I,J$. Moreover, $\phi'$ and $u_{r,I,J}$, $v_{r,I,J}$ are unique for each $ r,I,J$.
\end{thm}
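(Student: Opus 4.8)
The plan is to establish existence and uniqueness of the displayed normal form separately, handling the separable and inseparable cases uniformly by means of the $2$-bases of $\overline{F}_p$ and $F_p$ fixed above. Since \cite[Theorem 4.2]{ex} already supplies one description of $W_1(H_2^{m+1}(F_p))$, for existence I would either re-derive the decomposition directly from the $2$-basis of $F_p$ or, more economically, rewrite the coefficients of that known description in the present normal form. Concretely, every class in $H_2^{m+1}(F_p)$ is represented by a sum $\sum_{I\in(T_p)_m}\overline{a_I\frac{\dd t_I}{t_I}}$ with $a_I\in F_p$, and $W_1(H_2^{m+1}(F_p))$ is the quotient obtained by killing the image of $H_2^{m+1}(\overline{F}_p)$. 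The core of the existence argument is to reduce each coefficient $a_I$ to the stated shape: expanding $a_I$ in the uniformizer $p$ over $\overline{F}_p$ and writing each Laurent coefficient in the $\overline{F}_p^2$-basis $\{t^J\}$ produces a sum of terms $\overline{t^Jw^2}/p^n$. By Remark \ref{R21} every summand of non-negative $p$-valuation lies in $\wp(F_p)$ and may be discarded, so only principal parts survive; and because each $t^J$ is square-free in the $2$-basis, a surviving term $\overline{t^Jw^2}/p^n$ cannot be reduced further by $\wp$, since $t^J$ has no square root in $F_p$. The relations coming from $\dd\Omega_{F_p}^{m-1}$ then account for the lexicographic constraint $J+I>I$, which picks one monomial $t^J$ per class and removes the remaining redundancy, just as in the bookkeeping of \cite{AJ}. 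Sorting the surviving terms by whether the uniformizer $p=t_\infty$ occurs in the logarithmic factor $\frac{\dd t_I}{t_I}$ yields the $K$-sum over $(T_p)_m\setminus(\overline{T}_p)_m$ and the $L$-sum over $(\overline{T}_p)_m$, while the $p$-logarithmic forms carrying an $\overline{F}_p$-constant coefficient assemble, after a Teichm\"uller lift, into the single term $\phi'\wedge\overline{\frac{\dd p}{p}}$ with $\phi'\in H_2^m(\overline{F}_p)$.

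For uniqueness I would argue in two steps. First, the canonical surjection $W_1(H_2^{m+1}(F_p))\to H_2^m(\overline{F}_p)$ furnished by \cite[Theorem 4.2]{ex} kills the principal-part summands (the $K$- and $L$-terms, whose coefficients have strictly negative $p$-valuation) and sends $\phi'\wedge\overline{\frac{\dd p}{p}}$ to $\phi'$; hence $\phi'$ is recovered from $\phi$ and is unique. Second, after subtracting $\phi'\wedge\overline{\frac{\dd p}{p}}$ we are left with the pure principal part $\sum_K\overline{a_K\frac{\dd t_K}{t_K}}+\sum_L\overline{a_L\frac{\dd t_L}{t_L}}$; as this has no unramified (pole-free) component, its vanishing in $W_1(H_2^{m+1}(F_p))$ forces it to vanish already in $H_2^{m+1}(F_p)$, so that under Kato's isomorphism (\ref{Ke}) the associated quadratic form $Q$, built from the Pfister forms $\langle\langle t_K\rangle\rangle\otimes[1,a_K]$ and $\langle\langle t_L\rangle\rangle\otimes[1,a_L]$, lies in $I_q^{m+2}(F_p)$. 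This $Q$ is precisely the form treated in Proposition \ref{P22}, applied with $L_p=F_p$ and the identification $\overline{F}_p\cong\F[x]_{<d}$; hence all $u_{r,I,J}=v_{r,I,J}=0$, and $(\phi',\{u_{r,I,J}\},\{v_{r,I,J}\})$ is uniquely determined.

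The step I expect to be the main obstacle is existence, and inside it the bookkeeping of the $\wp$- and $\dd$-reductions: one has to verify that imposing the lexicographic constraint $J+I>I$ genuinely selects a complete and non-redundant family of representatives, so that nothing is lost and no further cancellation is possible, and that the residual $p$-logarithmic part can be isolated cleanly as $\phi'\wedge\overline{\frac{\dd p}{p}}$ without interfering with the $K$- and $L$-sums. A secondary point requiring care in the uniqueness argument is the claim that a pure principal part vanishing in $W_1(H_2^{m+1}(F_p))$ already vanishes in $H_2^{m+1}(F_p)$; this uses that the principal parts meet the unramified subgroup ${\rm Im}(\alpha')$ trivially. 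Once the normal form is reached, the remaining verifications are formal consequences of Proposition \ref{P22} and Kato's isomorphism.
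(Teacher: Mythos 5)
Your overall strategy matches the paper's: for existence, reduce a known description of $W_1(H_2^{m+1}(F_p))$ to the stated normal form using Remark \ref{R21} and the lexicographic constraint; for uniqueness, transport the principal part to a quadratic form via Kato's isomorphism and invoke Proposition \ref{P22}. However, there is a genuine gap in your uniqueness argument, precisely at the point you flag as ``a secondary point requiring care.'' You assert that a pure principal part which vanishes in $W_1(H_2^{m+1}(F_p))$ must already vanish in $H_2^{m+1}(F_p)$, i.e.\ that the principal parts meet ${\rm Im}(\alpha')$ trivially. This cannot be assumed: it is essentially a piece of the uniqueness statement itself, and without it you cannot place the associated form $Q$ in $I_q^{m+2}(F_p)$, so you cannot apply Proposition \ref{P22} with $L_p=F_p$ as you propose. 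The paper sidesteps this entirely: it writes the vanishing relation as $\sum_K\overline{a_K\frac{\dd t_K}{t_K}}+\sum_L\overline{a_L\frac{\dd t_L}{t_L}}+\phi'\wedge\overline{\frac{\dd p}{p}}+\phi_1=0$ in $H_2^{m+1}(F_p)$ with $\phi_1$ unramified, then passes to a finite \emph{separable} extension $L_p/F_p$ chosen to kill both $\phi_1$ and $\phi'$ (separability preserving the $2$-bases and the complete discrete valuation), and only then applies Proposition \ref{P22} --- which is exactly why that proposition is formulated for an arbitrary complete field $L_p$ rather than for $F_p$. Note also that the paper recovers $\phi'$ last, via the residue map $\zeta$ of \cite[Definition 6.12]{ex}, after the $a_K$, $a_L$ have been shown to vanish; your plan of extracting $\phi'$ first from the surjection onto $H_2^m(\overline{F}_p)$ is plausible but still leaves you facing the same unresolved intersection problem for the principal part.

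On existence, your sketch is in the right spirit but elides the main technical content. The paper starts from the decomposition of \cite[Theorem 2.3(1)]{ex}, whose numerators are Teichm\"uller lifts $\alpha(\overline{g})$; converting these into genuine polynomial numerators $\overline{t^Jv^2}$ with $v\in\F[x]_{<d}$ requires an induction on the pole order $l$ (equation (\ref{fpl,equation})), splitting $t^Jf_J^2=ph_{1,J}+h_{2,J}$ with $\deg h_{i,J}\leq d-1$ at each step and treating even and odd $l$ separately. The point is that $\overline{t^Jv^2}$, the reduction mod $p$ of a polynomial of degree up to $2d-1$, is not of the form $t^J\cdot(\text{square})$ in $\F[x]$, so the rewriting does not terminate in one pass; your phrase about writing each Laurent coefficient in the $\overline{F}_p^2$-basis hides this iteration. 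That omission is fixable at the level of bookkeeping, but the separable-extension step in the uniqueness part is an idea your argument is genuinely missing.
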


\begin{proof} \noindent{\bf 1. Existence of the decomposition.}
\medskip

Let $\phi\in W_1(H_2^{m+1}(F_p))$, and $\alpha: \overline{F}_p \xrightarrow{} F_p$ the Teichm\"uller lifting given as follows:
\begin{itemize}
\item for $p$ is separable: $\alpha( t_i)=t_i$ for all $i\in \I$. 
\item for $p$ is inseparable: $\alpha( t_i)=t_i$ for all $i\in \I\setminus\{i'\}$ and $\alpha(\overline{x})=x$.  
\end{itemize}

Since $\overline{F}_p\cong \alpha(\overline{F}_p)$, we consider $ \alpha(\overline{F}_p)\subset F_p$ as the residue field of $F_p$.  Therefore,  \cite[Theorem 2.3(1)]{ex} implies that  each element $\phi\in W_1(H_2^{m+1}(F_p))$ can be uniquely written as  ${\phi=\psi+\phi_2\wedge \overline{\frac{\dd p}{p}}}$, where $ \phi_2\in H_2^m(\alpha(\F(p)))$ and ${\psi= \sum\limits_{I\in (T_p)_m}\sum\limits_{\{J\in T_p\mid  J+I>I\}}\overline{t^J r^2_{I,J}\frac{\dd t_I}{t_I}}}$ for $r_{I,J}\in p^{-1}\alpha(\overline{F}_p)[p^{-1}] $. Hence, each $t^Jr^2_{I,J}$ can be written as ${t^Jr^2_{I,J}= \sum\limits_{i=1}^{l} \frac{\alpha( \overline{g}_i)}{p^i}}$ {for some $l\geq 1$}, where $g_i\in \F[x]$.  Since $\alpha(\overline{g}_i)=\sum\limits_{j\geq 0} g_{i,j}p^j$, with $g_{i,j}\in \F[x]_{<d}$,  we have  \[ \phi= \psi+\phi_2\wedge \overline{\frac{\dd p}{p}}\in W_1(H_2^{m+1}(F_p)),\] where    $\phi_2\in H_2^{m}(\alpha(\F(p)))$  and ${ \psi=\sum\limits_{I\in ({T}_p)_m}\sum\limits_{i\in \Z}\sum\limits_{l\leq i}\overline{\frac{ g_{i,l,I}}{p^l}\frac{\dd t_I}{t_I}}}$ with $ g_{i,l,I}\in \F[x]_{< d}$. We get the required  decomposition in two steps.\\

\noindent{\bf Step 1:} Here we prove that for any $f\in \F[x]_{< d}$, $I\in ({T}_p)_m$ and $l\in \Z$, we have in $H_2^{m+1}(F)$:\begin{equation}\label{fpl,equation}\overline{\frac{f}{p^l}\frac{\dd t_I}{t_I}}= \sum_{K\in ({T}_p)_m\setminus (\overline{T}_p)_m}\overline{ a_K\frac{\dd t_K}{t_K}}+ \sum_{L\in (\overline{T}_p)_m} \overline{ a_L \frac{\dd t_L}{t_L}}+ \sum_{M\in ({T}_p)_m}\overline{f_M\frac{\dd t_M}{t_M}}, \end{equation} where $f_M\in \F[x]$ and $a_K,a_L$ are as defined in Theorem \ref{T23}. If $l\leq 0$, then it is obvious. Now we {suppose that $l\geq 1$ and we proceed by induction on $l$.} For $ f\in \F[x]_{< d}$, using the $2$-bases of $\overline{F}_p$, we \textcolor{black}{find $ f_J,k\in \F[x]_{<d}$ for any $J\in \overline{T}_p$ such that} :\begin{equation}\label{de}
                f=\sum_{J\in \overline{T}_p} t^J f_J^2+pk.
            \end{equation} For $l=1,$ we have :\begin{align*}
                \overline{ \frac{f}{p}\frac{\dd t_I}{t_I}}&= \sum_{J\in \overline{T}_p} \overline{\frac{ t^J f_J^2}{p}\frac{\dd t_I}{t_I}}+\overline{k\frac{\dd t_I}{t_I}}\\&=\sum_{\{J'\in {T}_p\mid J'_{\n}=1\}} \overline{ \frac{t^{J'} f_J^2}{p^2}\frac{\dd t_I}{t_I}}+\overline{k\frac{\dd t_I}{t_I}}\quad \textcolor{black}{(\text{{where }}  t^{J'}=t^J)}.\end{align*} 

Since $J'\neq 0$, it follows from \cite[Remark 2.4]{ex} that we can replace $I$ by $I'$ such that $ J'+I'> I'$, keeping the separable part same. Since $J'_{\n}=1$ and $J'+I'>I'$, it follows that $I'_{\n}=0$. Therefore, we have: \begin{align*}
\overline{ \frac{f}{p}\frac{\dd t_I}{t_I}}&=\sum_{I'\in (\overline{T}_p)_m}\sum_{\{ J'\in {T}_p\mid J'_{\n}=1,\, J'+I'> I'\}} \overline{ \frac{t^{J'}f^2_{J}}{p^2}\frac{\dd t_{I'}}{t_{I'}}}+\overline{k\frac{\dd t_I}{t_I}}\\& = \sum_{I'\in (\overline{T}_p)_m}\sum_{{J\in \overline{T}_p}} \overline{\frac{t^Jf_J^2}{p}\frac{\dd t_{I'}}{t_{I'}}}+\overline{k\frac{\dd t_I}{t_I}}\\& = \sum_{I'\in (\overline{T}_p)_m}\sum_{J\in \overline{T}_p}\overline{ \frac{\overline{ t^Jf^2_J}}{p}\frac{\dd t_{I'}}{\dd t_{I'}}}+ \sum_{I'\in (\overline{T}_p)_m}\sum_{J\in \overline{T}_p}\overline{{h_{J}} \frac{\dd t_{I'}}{t_{I'}}}+\overline{k\frac{\dd t_I}{t_I}},
                \end{align*} where $ h_{J}\in \F[x]$.
                 Now the sum in the right side is in  our required form as in (\ref{fpl,equation}). 
               
\noindent{Suppose} that for some $r\geq 1$, we may write  $ {\overline{ \frac{f}{p^l}\frac{\dd t_I}{t_I}}} $ as the sum in (\ref{fpl,equation}) for any $l\leq r$ and $f\in \F[x]_{< d}$. We have to prove the same for $ {\overline{ \frac{f}{p^{r+1}}\frac{\dd t_I}{t_I}}} $. We treat two cases  depending on {$r$ is even or odd.}
\medskip

\noindent{(i)} {Suppose that $r$ is odd and put $r+1=2e$.} Then, using the value of $f$ from (\ref{de}), we get: 
\begin{align*}
\overline{\frac{f}{p^{r+1}}\frac{\dd t_I}{t_I}}&= \sum_{J\in \overline{T}_p} \overline{ \frac{t^Jf_J^2}{p^{2e}}\frac{\dd t_I}{t_I}}+\overline{\frac{k}{p^r}\frac{\dd t_I}{t_I}} =\sum_{0\neq J\in \overline{T}_p} \overline{ \frac{t^Jf_J^2}{p^{2e}}\frac{\dd t_I}{t_I}}+\overline{ \frac{f_0^2}{p^{2e}}\frac{\dd t_I}{t_I}}+\overline{\frac{k}{p^r}\frac{\dd t_I}{t_I}}. \end{align*} { Since in the first sum $J\neq 0$, {it follows from \cite[Remark 2.4]{ex} that} we can change $I$ to $I'$ such that $J+I'> I'$ keeping the separable part same. Therefore, we get} \begin{align*} \overline{\frac{f}{p^{r+1}}\frac{\dd t_I}{t_I}}&= \sum_{I'\in ({T}_p)_m}\sum_{\{J\in \overline{T}_p\mid J+I'> I'\}} \overline{ \frac{ t^J f^2_J}{p^{2e}}\frac{\dd t_{I'}}{t_{I'}}} + \overline{ \frac{f_0}{p^e}\frac{\dd t_I}{t_I}}+\overline{\frac{k}{p^r}\frac{\dd t_I}{t_I}}.\end{align*}{Since for any $J\in \overline{T}_p$, $\deg( t^Jf_J^2)\leq 2d-1$, we can write $ t^Jf_J^2= ph_{1,J}+h_{2,J}$ with $\deg h_{i,J}\leq d-1$. Thus, we obtain:}\begin{align*} \overline{\frac{f}{p^{r+1}}\frac{\dd t_I}{t_I}}&= \sum_{I'\in ({T}_p)_m}\sum_{\{J\in \overline{T}_p\mid J+I'> I'\}} \overline{ \left(\frac{ h_{2,J}}{p^{2e}}+\frac{h_{1,J}}{p^{r}}\right)\frac{\dd t_{I'}}{t_{I'}}} + \overline{ \frac{f_0}{p^e}\frac{\dd t_I}{t_I}}+\overline{\frac{k}{p^r}\frac{\dd t_I}{t_I}}\\&= \sum_{I'\in ({T}_p)_m, J\in \overline{T}_p\atop J+I'> I'} \overline{ \frac{ \overline{ t^J f_J^2}}{p^{2e}}\frac{\dd t_{I'}}{t_{I'}}} + \sum_{I'\in ({T}_p)_m, J\in \overline{T}_p\atop J+I'> I'} \overline{ \frac{ h_{1,J}}{p^{r}}\frac{\dd t_{I'}}{t_{I'}}} +\overline{ \frac{f_0}{p^e}\frac{\dd t_I}{t_I}}+\overline{\frac{k}{p^r}\frac{\dd t_I}{t_I}}\\&= \sum_{K\in ({T}_p)_m\setminus (\overline{T}_p)_m, J\in \overline{T}_p \atop J+K> K}\overline{\frac{\overline{t^J f^2_J}}{p^{2e}} \frac{\dd t_{K}}{t_{K}}}+ \sum_{L\in (\overline{T}_p)_m, J\in \overline{T}_p\atop J+L> L}\overline{\frac{\overline{t^J f^2_J}}{p^{2e}} \frac{\dd t_{L}}{t_{L}}}+B,
\end{align*}where ${B= \sum\limits_{I'\in ({T}_p)_m, J\in \overline{T}_p\atop J+I'> I'} \overline{ \frac{ h_{1,J}}{p^{r}}\frac{\dd t_{I'}}{t_{I'}}} +\overline{ \frac{f_0}{p^e}\frac{\dd t_I}{t_I}}+\overline{\frac{k}{p^r}\frac{\dd t_I}{t_I}}}$. Now {the first and the second sums} of the right  side are in required form, and $B $ is in required form by induction hypothesis. So we are done for {$r$ odd}.\\

\noindent{(ii)} {Suppose that $r$ is even. Then, $r+1$ is odd and this case will be similar to  the case $l=1$. }We will also have to use induction hypothesis. 
\medskip
            
Finally, we can say that for any $f\in \F[x]_{< d}, I\in ({T}_p)_m$ and $l\in \Z$,  the equation (\ref{fpl,equation}) holds in $H_2^{m+1}(F)$.  Now using Remark \ref{R21} in $W_1 (H_2^{m+1}(F_p))$, we get$$\phi= \sum_{K\in ({T}_p)_m\setminus (\overline{T}_p)_m}\overline{ a_K\frac{\dd t_K}{t_K}}+ \sum_{L\in (\overline{T}_p)_m} \overline{ a_L \frac{\dd t_L}{t_L}}+ \sum_{M\in ({T}_p)_m}\overline{f_M\frac{\dd t_M}{t_M}}+\phi_2\wedge\overline{\frac{\dd p}{p}},$$  where $ f_M\in \F[x]_{<d}, \phi_2\in H_2^m(\alpha(\F(p)))$ and $a_K, a_L$ are as defined in the statement of {the theorem}.\\
            
\noindent{\bf Step 2:} Let ${ \phi_2= \sum\limits_{I\in (\overline{T}_p)_{m-1}}\overline{ h_I\frac{\dd t_I}{t_I} }},$ where $ h_I\in \alpha( \F(p))$. Therefore, $ h_I= h'_I+h''_I$, where $h'_I\in \F[x]_{<d}$ and $v_p(h''_I)>0$.  Hence, we have    ${\phi_2=  \sum\limits_{I\in (\overline{T}_p)_{m-1}}  \overline{h_I'\frac{\dd t_I}{t_I}}}\in H_2^{m+1}(F_p)$. Moreover, $ {\overline{ f_M\frac{\dd t_M}{t_M}}}\in  H_2^{m+1}(\F(p)) $ or ${H_2^{m}(\F(p))\wedge \overline{\frac{\dd p}{p}}}$ according as $M_{\n}=0$ or $1$.
            Therefore, we  can express 
             $\phi\in W_1(H_2^{m+1}(F_p))$ as follows:
\begin{equation}\label{phi} \phi=\sum_{K\in ({T}_p)_m\setminus (\overline{T}_p)_m}\overline{ a_K\frac{\dd t_K}{t_K}}+ \sum_{L\in (\overline{T}_p)_m} \overline{ a_L \frac{\dd t_L}{t_L}}+ \phi'\wedge\overline{\frac{\dd p}{p}},\end{equation} where $\phi'\in H_2^m(\F(p))$.
\newpage

\noindent{\bf 2. Uniqueness of the decomposition:}  
\medskip

Let $ \phi=\sum\limits_{K\in ({T}_p)_m\setminus (\overline{T}_p)_m}\overline{ a_K\frac{\dd t_K}{t_K}}+ \sum\limits_{L\in (\overline{T}_p)_m} \overline{ a_L \frac{\dd t_L}{t_L}}+ \phi'\wedge\overline{\frac{\dd p}{p}}= 0\in W_1(H_2^{m+1}(F_p))$.  Then, in $H_2^{m+1}(F_p)$, {we have $\phi= \phi_1$ for some $\phi_1\in H_2^{m+1}(\alpha(\overline{F}_p))$.} Therefore, we get
\begin{equation}  
\sum_{K\in ({T}_p)_m\setminus (\overline{T}_p)_m}\overline{ a_K\frac{\dd t_K}{t_K}}+ \sum_{L\in (\overline{T}_p)_m} \overline{ a_L \frac{\dd t_L}{t_L}}+ \phi'\wedge\overline{\frac{\dd p}{p}}+\phi_1=0\in H_2^{m+1}(F_p).
\label{uniq}
\end{equation}

Now we take a suitable finite separable extension $L_p$ of $F_p$ such that $(\phi_1)_{L_p}=0$ and $(\phi')_{L_p}=0$. The $2$-basis of $F_p$ remains a $2$-basis of $L_p.$ The 
$p$-adic valuation of $F_p$ can be {uniquely extended to $L_p$, which} will be complete with respect to this valuation.  \textcolor{black}{Let $\overline{L}_p$ be the residue field  of $L_p$. Then, $\overline{L}_p$ is a separable extension of $\overline{F}_p$, and thus the $2$-basis of $\overline{F}_p$ remains a $2$-basis of $ \overline{L}_p$.} By (\ref{uniq}), we have $${{(\phi)_L}_p=\sum\limits_{K\in ({T}_p)_m\setminus (\overline{T}_p)_m}\overline{ a_K\frac{\dd t_K}{t_K}}+ \sum\limits_{L\in (\overline{T}_p)_m} \overline{ a_L \frac{\dd t_L}{t_L}}=0\in H_2^{m+1}(L_p)}.$$ 

Using the Kato's isomorphism (\ref{Ke}), it follows from Proposition \ref{P22} that $a_K=a_L=0\in L_p$, and thus $a_K=a_L=0\in F_p$. 

Recall that ${a_{{K}}= \sum\limits_{\{J\in \overline{T}_p \mid J+K> K\}}\sum\limits_{r\geq 1}\frac{\overline{t^Jv_{r,K,J}^2}}{p^{2r}}}$, where $ v_{r,K,J}\in \F[x]_{< d}$ for all $K\in ({T}_p)_m\setminus (\overline{T}_p)_m$.  Therefore, for each $r\geq 1$, we have $$\sum\limits_{\{J\in \overline{T}_p\hspace{1mm}|\hspace{1mm} J+K> K\}}\overline{ t^J v^2_{r,K,J}}=0\in \overline{F}_p.$$

Since  $ \{t^J \mid J\in \overline{T}_p\}$ is an $\overline{F}^2_p$-basis of $\overline{F}_p$, and $\deg v_{r,K,J}\leq d-1$, it follows that each $v_{r,K,J}=0$. 

Again ${ a_{{L}}=\sum\limits_{\{J\in {T}_p\mid t^J=t^{J'}p\}} \textcolor{black}{\sum\limits_{r\geq 0}\frac{\overline{ t^{J'} u_{r,L,J}^2}} {p^{2r+1}}}+\sum\limits_{\{J\in \overline{T}_p \mid J+L> L\}}\sum\limits_{r\geq 1}\frac{\overline{t^Jv_{r,L,J}^2}}{p^{2r}}}$,  where   $ u_{r,L,J}, v_{r,L,J}\in \F[x]_{< d}$ and $L\in (\overline{T}_p)_m$. Here also we get $ u_{r,L,J}=v_{r,L,J}=0$. Finally, we have  ${\phi'\wedge \overline{\frac{\dd p}{p} }+\phi_1=0\in H_2^{m+1}(F_p)} $. Applying the  residue map $\zeta$ of \cite[Definition 6.12]{ex}, we get $\zeta\left( \phi_1+ \phi'\wedge\overline{\frac{\dd p}{p}}\right)=\phi'=0\in H_2^m(\F(p))$. Thus, the uniqueness holds.
\end{proof}

\begin{rem} {Theorem  \ref{T23} and \cite[Theorem 4.2]{ex} give two different decompositions of $W_1(H_2^{m+1}(F_p))$.  From now on, we will use both decompositions as needed.}
\end{rem}
      
\section{Introducing the  transfer maps $s_p^{*}$}\label{S3}
{Here} also we  use the same $2$-bases for $ \F$, $\overline{F}_p$ and $F_p$, where  $F=\F(x)$ and  $p$  is a monic irreducible polynomial or  {$p=\frac{1}{x}$}, as defined in Section \ref{S2}. We also use  the notations $T,\overline{T}_p$ and ${T}_p$ as before.  Note that if $I\in {T}_p$, then $I_{\infty}$ denotes the $p$-th component of $I$. If  $p$ is {inseparable} and $I\in {T}_p$, then $I_x$ denotes the $x$-th component of $I$. Therefore, $I_x=1$ implies that $ t^I$ is a multiple of $x.$ Now we recall some notations from \cite{ex}.
\medskip
 
 \noindent{\textbf{Notations:}}
 \noindent{(1)} For $d\geq 1$, $L_d$ is the subgroup of $H_2^{m+1}(F)$ generated by the elements  $ {\overline{ \frac{h}{u^e} \frac{\dd f_1}{f_1} \wedge \ldots\wedge \frac{\dd f_m}{f_m}}}$, where $h\in \F[x]$, $ e\geq 0$ and {$0\neq f_i\in \F[x]_{\leq d}$.}
\medskip
 
\noindent{(2)} $L_0$ is the subgroup  of $H_2^{m+1}(F)$ generated by  the elements $ {\overline{ h\frac{\dd c_1}{c_1}\wedge \ldots\wedge\frac{\dd c_{m-1}}{c_{m-1}} \wedge \frac{\dd f_m}{f_m}}}$, where $c_i\in \F^*$, $f_m\in \F^*\cup\{x\}$ and $ h\in \F[x]$ ({\resp} $h\in x\F[x] $) if $f_m\in \F^* $ ({\resp}  if $f_m=x$).
\medskip
 
\noindent{(3)} For {$p\in \F[x]$} monic irreducible, $S_p$ denotes the subgroup of $ H_2^{m+1}(F)$ generated by the elements $ {\overline{\frac{h}{p^e} \frac{\dd t_I}{t_I}}}$, where $h\in \F[x]$, $e\geq 0$ and $I\in (T)_m$. 
\medskip
 
\noindent{(4)}  Similarly, $S_{\frac{1}{x}}$ denotes the subgroup of $ H_2^{m+1}(F)$ generated by  the elements $ {\overline{{xh}\frac{\dd t_I}{t_I}}}$, where $h\in \F[x]$ and $I\in (T)_m$.
\medskip
 
\noindent{(5)} If $p$ is inseparable, then $\tilde{S}_p$ denotes the subgroup of $ H_2^{m+1}(F)$ generated by the elements  $ {\overline{\frac{h}{p^e} \frac{\dd t_I}{t_I}}}$, where $h\in \F[x]$, $e\geq 0$ and $I\in (\overline{T}_p)_m$.\qed

Whenever we use $S_p\wedge \overline{\frac{\dd p}{p}}$ or $\tilde{S}_p\wedge \overline{\frac{\dd p}{p}}$, then {$S_p$ and $\tilde{S}_p$ are considered as subgroups of $H_2^{m}(F)$} to balance the total wedge.  Moreover,  for any $g\in \F[x]$, let $\overline{g}$ denotes the unique polynomial of degree $<\deg p$ such that $g\equiv \overline{g}\pmod{p}$. Using these notations, we give some technical definitions. 
\medskip

\begin{defn}\label{Up} {\bf (The separable case).} Suppose that $p$ is separable.

\noindent{(i)} For $r$ odd, let ${S'_{p,r}\subseteq S_p+ S_p\wedge \overline{\frac{\dd p}{p}}}$ be the subgroup  generated by   the  forms ${\overline{ \frac{\overline{t^Js^2_{r,I,J}}}{p^r} \frac{\dd t_I}{t_I}}},$ where  $ I\in (\overline{T}_p)_m$, $J\in \overline{T}_p$ and $  s_{r,I,J}\in \F[x]_{<  \deg p}$ .\\

\noindent{(ii)} For $r$ even, let ${S'_{p,r}\subseteq S_p+S_p\wedge \overline{\frac{\dd p}{p}}}$ be the subgroup generated by the  forms $ { \overline{\frac{\overline{t^Js^2_{r,I,J}}}{p^r}\frac{\dd t_I}{t_I}}}$ and ${ \overline{\frac{\overline{t^{J'} s^2_{r,I',J'}}}{p^r} \frac{\dd t_{I'}}{t_{I'}}\wedge \frac{\dd p}{p}}}$,  where  $ I\in ({T})_m,I'\in ({T})_{m-1}$  and $ J,J'\in {T}$ such that $ J+I> I$, $J'+I'> I'$. Moreover, $ s_{r,I,J}$, $s_{r,I',J'}\in \F[x]_{< \deg p}$.

Recall that in this case $ \overline{T}_p=T$.
\end{defn}

\begin{defn} {\bf (The inseparable case).} Suppose that $p$ is inseparable.
\medskip

\noindent{(i)} For $r$ odd, we define:
\begin{itemize}
\item ${S'_{p,r}\subseteq  \tilde{S}_p+\tilde{S}_p\wedge \overline{\frac{\dd p}{p}}}$ as the subgroup generated by the forms ${\overline{ \frac{\overline{ t^Js^2_{r,I,J}}}{p^r} \frac{\dd t_I}{t_I}}},$ where  $ I\in (\overline{T}_p)_m, J\in \overline{T}_p$ and  $ s_{r,I,J}\in \F[x]_{<  \deg p}$.

\item $ S_{p,r}^0\subseteq S'_{p,r}$ as the subgroup generated by the forms   ${\overline{ \frac{\overline{t^Js^2_{r,I,J}}}{p^r} \frac{\dd t_I}{t_I}}},$ where  $ J\in \overline{T}_p$, $s_{r,I,J}\in \F[x]_{<\deg p}$ and $ I\in (\overline{T}_p)_m, $ with $I_x=0$.
\end{itemize}

\noindent{(ii)} For $r\geq 2$ even, we define:
\begin{itemize}
\item ${S'_{p,r}\subseteq \tilde{S}_p+\tilde{S}_p\wedge \overline{\frac{\dd p}{p}}}$ as the subgroup generated by the forms ${\overline{\frac{\overline{t^Js^2_{r,I,J}}}{p^r}\frac{\dd t_I}{t_I}}}$ and ${\overline{\frac{\overline{t^{J'}s^2_{r,I',J'}}}{p^r}\frac{\dd t_{I'}}{t_{I'}}\wedge \frac{\dd p}{p}}}$, where  $I\in (\overline{T}_p)_m$, $I'\in (\overline{T}_p)_{m-1}$ and   $J,J'\in \overline{T}_p$ such that  $ J+I> I, J'+I'> I'$. Moreover, $ s_{r,I,J},s_{r,I',J'}\in \F[x]_{<\deg p}$.

\item $ S_{p,r}^0\subseteq S'_{p,r}$ as the  subgroup 
generated by the elements $ {\overline{\frac{\overline{t^J s^2_{r,I,J}}}{p^r}\frac{\dd t_I}{t_I}}}$ and $ {\overline{\frac{\overline{t^{J'}s^2_{r,I',J'}}}{p^r}\frac{\dd t_{I'}}{t_{I'}}\wedge \frac{\dd p}{p}}}$, where $J\in \overline{T}_p, J'\in \overline{T}_p$ and $I\in (\overline{T}_p)_m, I'\in (\overline{T}_p)_{m-1}$, with $I_x=I'_x=0$ such that $ J+I> I, J'+I'> I'$. Moreover, $ s_{r,I,J}$, $s_{r,I',J'}\in \F[x]_{< \deg p}$.
\end{itemize}
\label{Up2}
\end{defn}
\begin{defn} {\bf (The case $p=1/x$).}\\
\noindent{(i)} For $r$ odd, let ${ S'_{\frac{1}{x},r}\subseteq S_{\frac{1}{x}}+ S_{\frac{1}{x}}\wedge \overline{\frac{\dd x}{x}}}$ be the subgroup generated by  the forms $ {\overline{ t^Jc_{r,I,J}^2 x^r \frac{\dd t_I}{t_I}}},$ where $I\in ({T})_m$, $J\in T$ and $c_{r,I,J}\in \F$.
\medskip

\noindent{(ii)} For $r$ even, let ${S'_{\frac{1}{x},r}\subseteq S_{\frac{1}{x}}+ S_{\frac{1}{x}}\wedge \overline{\frac{\dd x}{x}} }$ be the subgroup generated by the  following type of  forms ${\overline{ t^{J}c_{r,I,J}^2 x^r \frac{\dd t_I}{t_I}}} $ and $ {\overline{ t^{J'}c_{r,I',J'}^2 x^r \frac{\dd t_{I'}}{t_{I'}}\wedge \frac{\dd x}{x}}}$, where  $ I\in(T)_m,I'\in (T)_{m-1} $ and $J',J\in T$, such that $ J+I>I$ and $J'+I'>I'$. Moreover,   $ c_{r,I,J},c_{r,I',J'}\in \F$.
\medskip
         
\noindent{(iii)} Finally,  we define the subgroup  $ {U_{p}:= \sum\limits_{r\geq 1} S'_{p,r}}\subseteq H_2^{m+1}(F_p)$ for all $p$  monic irreducible and ${p=\frac{1}{x}}.$ If $p$ is inseparable,  then we define  ${U_{p}^0:= \sum\limits_{r \geq 1} S_{p,r}^0}$.
\end{defn}

In the following lemma we establish the  relations between  $U_p$ and   $ {S_p\wedge \overline{\frac{\dd x}{x}}}$, where $p$ is  monic irreducible.
 
 \begin{lemma}\label{L32} {Let \textcolor{black}{$p\in \F[x]$} be an irreducible monic polynomial or $p=\frac{1}{x}$.}\\
 
\noindent{(i)} If $p$ is separable, then $S_p+S_p\wedge \overline{\frac{\dd p}{p}}\subseteq U_p+ L_0+L_0\wedge \overline{\frac{\dd p}{p}}.$ \\
 
\noindent{(ii)} If $p$ is separable, then $ S_p+ S_p\wedge \overline{\frac{\dd x}{x}}\subseteq U_p+L_0+L_0\wedge \overline{\frac{\dd x}{x}}+L_0\wedge \overline{\frac{\dd p}{p}}.$\\

\noindent{(iii)} If $p$ is inseparable, then $ \tilde{S}_p+ \tilde{S}_p\wedge \overline{\frac{\dd p}{p}}\subseteq U_p+L_0+L_0\wedge \overline{\frac{\dd x}{x}}+ L_0\wedge \overline{ \frac{\dd p}{ p}}+L_0\wedge \overline{\frac{\dd x}{x}\wedge \frac{\dd p}{p}}.$\\

\noindent{(iv)} If $p$ is inseparable, then $ S_p+S_p\wedge \overline{\frac{\dd x}{x}}\subseteq  U_p+L_0+L_0\wedge \overline{\frac{\dd x}{x}}+ L_0\wedge \overline{ \frac{\dd p}{ p}}+L_0\wedge \overline{\frac{\dd x}{x}\wedge \frac{\dd p}{p}}.$\\

\noindent{(v)}  The groups  $U_p=\bigoplus\limits_{r\geq 1} S'_{p,r}$ and $U_p^0=\bigoplus\limits_{r\geq 1}S_{p,r}^0$ are direct sums.\\

\noindent{(vi)}  Let $\overline{U_p}$ and $\overline{U_0}$ be the image of $U_p$ and $U_0$ in $W_1(H_2^{m+1}(F_p))$, respectively. Then, for each $p$ monic irreducible or ${\frac{1}{x}}$, we have: $$W_1(H_2^{m+1}(F_p))/{\overline{U_p}}\cong H_2^m(\F(p))\wedge \overline{\frac{\dd p}{p}},$$ where ${ H_2^m(\F(p))\wedge \overline{\frac{\dd p}{p}}}$ is considered as a subgroup of $ H_2^{m+1}(F_p).$
\end{lemma}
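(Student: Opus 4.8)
The plan is to handle the four containment statements (i)--(iv) by one common reduction, to derive the directness (v) from Proposition \ref{P22}, and to read off the quotient (vi) directly from the unique decomposition of Theorem \ref{T23}. Throughout, the case of a monic irreducible $p$ and the case $p=\frac{1}{x}$ are treated in parallel, replacing $p$ by $\frac{1}{x}$ and $\overline{\frac{\dd p}{p}}$ by $\overline{\frac{\dd x}{x}}$.

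For (i)--(iv) I would argue on the generators of $S_p$ (\resp $\tilde S_p$, \resp $S_{\frac{1}{x}}$), which have the shape $\overline{\frac{h}{p^e}\frac{\dd t_I}{t_I}}$ with $h\in\F[x]$ and $e\geq 0$. The method is exactly the induction on the power $e$ carried out in Step 1 of the proof of Theorem \ref{T23} (equation (\ref{fpl,equation})), now keeping track of which summand each piece falls into. First I reduce $h$ modulo $p$, writing $h=\overline h+pk$, so that the term carrying $k$ has a strictly smaller $p$-power and is absorbed by the induction hypothesis; then I expand the degree-$<d$ part $\overline h$ in the chosen $2$-basis via (\ref{de}), $\overline h=\sum_{J\in\overline T_p}t^Jf_J^2+pk'$, splitting according to the parity of $e$ as in the two cases of that step. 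The genuinely fractional terms $\overline{\frac{\overline{t^Jf_J^2}}{p^e}\frac{\dd t_I}{t_I}}$ with $J+I>I$ are, by definition, generators of $S'_{p,e}$, hence lie in $U_p$; the residual terms of $p$-power zero have coefficient in $\F[x]$ and a logarithmic part $\frac{\dd t_I}{t_I}$ built only from the $t_i\in\F$, so they lie in $L_0$ (\resp in $L_0\wedge\overline{\frac{\dd p}{p}}$ when the extra factor $\overline{\frac{\dd p}{p}}$ was present). In the inseparable cases (iii) and (iv) the $2$-basis contains $x$, so the same reduction additionally manufactures terms involving $\frac{\dd x}{x}$; these account precisely for the extra summands $L_0\wedge\overline{\frac{\dd x}{x}}$ and $L_0\wedge\overline{\frac{\dd x}{x}\wedge\frac{\dd p}{p}}$, and sorting the output into these four groups is the main bookkeeping burden.

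For (v), suppose $\sum_{r\geq 1}\sigma_r=0$ in $H_2^{m+1}(F_p)$ with $\sigma_r\in S'_{p,r}$ (a finite sum). Writing each $\sigma_r$ in its defining form and collecting all contributions into the two sums $\sum_K\overline{a_K\frac{\dd t_K}{t_K}}+\sum_L\overline{a_L\frac{\dd t_L}{t_L}}$, the coefficients $a_K,a_L$ acquire exactly the shape prescribed in Proposition \ref{P22}, with the odd $r$ producing the $p^{2r+1}$-denominator terms and the even $r$ the $p^{2r}$-denominator terms. Via Kato's isomorphism (\ref{Ke}), the vanishing in cohomology says the associated quadratic form lies in $I_q^{m+2}(F_p)$, so Proposition \ref{P22} forces every coefficient $s_{r,I,J}$ to vanish, and hence each $\sigma_r=0$; this proves $U_p=\bigoplus_r S'_{p,r}$. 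Restricting the same argument to the generators with $I_x=I'_x=0$ gives the directness of $U_p^0=\bigoplus_r S_{p,r}^0$.

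Finally, (vi) is immediate from Theorem \ref{T23}. The first two sums in the decomposition of an arbitrary $\phi\in W_1(H_2^{m+1}(F_p))$ are sums of generators of the subgroups $S'_{p,r}$, hence constitute exactly the image $\overline{U_p}$, while the last summand $\phi'\wedge\overline{\frac{\dd p}{p}}$ runs over $H_2^m(\F(p))\wedge\overline{\frac{\dd p}{p}}$ with $\phi'$ unique. Thus $\phi\mapsto\phi'\wedge\overline{\frac{\dd p}{p}}$ is a well-defined surjection with kernel exactly $\overline{U_p}$, which yields the asserted isomorphism (the case $p=\frac{1}{x}$ being handled by the analogous decomposition). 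I expect the reduction in (i)--(iv) to be the crux: one must check that the induction terminates and that each residual term really lands in the claimed $L_0$-type subgroup without generating spurious denominators, the inseparable case being the delicate one because of the extra variable $x$.
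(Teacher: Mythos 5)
Your treatment of (i), (iii), (v) and (vi) matches the paper's: parts (i) and (iii) are exactly the paper's argument (it simply cites the identity (\ref{fpl,equation}) already established in Step~1 of Theorem \ref{T23} rather than re-running the induction, and then sorts the terms $\overline{a_K\frac{\dd t_K}{t_K}}$, $\overline{a_L\frac{\dd t_L}{t_L}}$, $\overline{f_M\frac{\dd t_M}{t_M}}$ into $U_p$ and the $L_0$-type summands); part (vi) is read off from Theorem \ref{T23} just as you describe; and for (v) the paper invokes the uniqueness statement of Theorem \ref{T23} rather than going back to Proposition \ref{P22} directly, which is only a cosmetic difference since that uniqueness is itself deduced from Proposition \ref{P22}.

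The genuine gap is in (ii) and (iv). Your ``common reduction'' operates on generators of the shape $\overline{\frac{h}{p^e}\frac{\dd t_I}{t_I}}$ with $t_I$ built from the $2$-basis of $F_p$, but the groups in (ii) and (iv) are $S_p+S_p\wedge\overline{\frac{\dd x}{x}}$, and neither the extra wedge factor nor the index set of $S_p$ fits that template. When $p$ is separable the $2$-basis of $F_p$ is $B\cup\{p\}$, so $\frac{\dd x}{x}$ is not a basis differential: wedging the output of (i) with $\overline{\frac{\dd x}{x}}$ produces terms such as $\overline{a_L\frac{\dd t_L}{t_L}\wedge\frac{\dd x}{x}}$, which are not generators of $S'_{p,r}$ (those carry either no extra wedge or a $\wedge\frac{\dd p}{p}$), so they do not visibly land in $U_p+L_0+L_0\wedge\overline{\frac{\dd x}{x}}+L_0\wedge\overline{\frac{\dd p}{p}}$. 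When $p$ is inseparable the generators of $S_p$ use $I\in(T)_m$, i.e.\ the original $2$-basis $B$ of $\F$ including $t_{i'}$, whereas the decomposition machinery and the definition of $U_p$ are tied to $\overline{T}_p$, in which $t_{i'}$ has been replaced by $x$; your proposal does not address this change of basis (indeed it never distinguishes $S_p$ from $\tilde S_p$ in (iv)). The paper closes both gaps by quoting the identities
$S_p+S_p\wedge\overline{\frac{\dd x}{x}}+L_0\wedge\overline{\frac{\dd x}{x}}=S_p+S_p\wedge\overline{\frac{\dd p}{p}}+L_0\wedge\overline{\frac{\dd x}{x}}$ (separable case) and
$S_p+S_p\wedge\overline{\frac{\dd x}{x}}+L_0+L_0\wedge\overline{\frac{\dd x}{x}}=\tilde S_p+\tilde S_p\wedge\overline{\frac{\dd p}{p}}+L_0+L_0\wedge\overline{\frac{\dd x}{x}}$ (inseparable case) from \cite[Proposition 3.8]{ex}, which reduce (ii) to (i) and (iv) to (iii). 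Some such conversion step (or an explicit computation trading $\frac{\dd x}{x}$ for $\frac{\dd p}{p}$ modulo $L_0$-terms, and $\frac{\dd t_{i'}}{t_{i'}}$ for $\frac{\dd x}{x}$ and $\frac{\dd p}{p}$) must be supplied; it is not produced automatically by the induction on the power of $p$.
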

 \begin{proof}
 (i) {Suppose that $p$ is separable of degree $d$.} The generators  of ${S_p+ S_p\wedge \overline{\frac{\dd p}{p}}}$ are  $ {\overline{\frac{h_l}{p^l}\frac{\dd t_I}{t_I}}}$, where  $I\in ({T}_p)_m$ and $l\geq 0$. Moreover, \textcolor{black}{we may suppose $h_l\in \F[x]_{< d}$ for all $l> 0$ and $h_0\in \F[x]$.}\\
{Now}  $ {\overline{h_0\frac{\dd t_I}{t_I}}}\in L_0$ or ${L_0\wedge \overline{\frac{\dd p}{p}}}$ according as $I_{\n}=0$ or $1$. For any $l\geq 1$ and $ h_l\in \F[x]_{< d}$, {we deduce from} (\ref{fpl,equation}) in $H_2^{m+1}(F)$:
              \begin{equation*}
                  \overline{ \frac{h_l}{p^l}\frac{\dd t_I}{t_I}}= \sum_{K\in ({T}_p)_m\setminus (\overline{T}_p)_m }\overline{a_K\frac{\dd t_K}{t_K}}+\sum_{L\in (\overline{T}_p)_m}\overline{a_L\frac{\dd t_L}{t_L}}+\sum_{M\in ({T}_p)_m}\overline{f_M\frac{\dd t_M}{t_M}},
              \end{equation*}
              where ${f_M}\in \F[x]$ and 
\[\begin{cases}a_K=\sum\limits_{\{J\in \overline{T}_p\mid J+K> K\}} \sum\limits_{r\geq 1}\frac{\overline{t^Jv^2_{r,K,J}}}{p^{2r}}\\a_L= \sum\limits_{\{ J\in {T}_p\mid t^J=t^{J'}p\}}\textcolor{black}{\sum\limits_{r\geq 0}\frac{\overline{t^{J'}u^2_{r,L,J}}}{p^{2r+1}}}+ \sum\limits_{\{ J\in \overline{T}_p\mid J+L> L\}}\sum\limits_{r\geq 1}\frac{\overline{ t^J v^2_{r,L,J}}}{p^{2r}}\end{cases}\]
              for $u_{r,I,J}$, $v_{r,I,J}\in \F[x]_{< d}$.  Now $\sum\limits_{M\in ({T}_p)_m}\overline{f_M\frac{\dd t_M}{t_M}}\in L_0+L_0\wedge \overline{\frac{\dd p}{p}}$ by the case $l=0$. \\
Clearly, by   Definition \ref{Up}(i), we have   $ {\overline{a_L\frac{\dd t_L}{t_L}}}\in U_p$ for  each  $L\in (\overline{T}_p)_m$. For any $K\in ({T}_p)_m\setminus(\overline{T}_p)_m$, we get  :\begin{align*}
    \overline{ a_K\frac{\dd t_K}{t_K}}&= \sum_{\{J\in \overline{T}_p\hspace{1mm}|\hspace{1mm} J+K> K\}} \sum_{r\geq 1} \overline{\frac{ \overline{t^J v^2_{r,K,J}}}{p^{2r}}\frac{\dd t_K}{t_K}}.\end{align*} Since $K_{\n}=1$, $ t^K =t^{K'}p$, for some $K'\in (\overline{T}_p)_{m-1}$. Therefore, by definition $ {\frac{\dd t_K}{t_K}=\frac{\dd t_{K'}}{t_{K'}}\wedge \frac{\dd p}{p}}$. Since $J\in \overline{T}_p$ and   $J+K>K$, it follows   that $J+K'>K'$. Hence, we get  \begin{align*}\overline{ a_K\frac{\dd t_K}{t_K}} &=\sum_{\{J\in \overline{T}_p \mid J+K'> K'\}} \sum_{r\geq 1} \overline{\frac{ \overline{t^J v^2_{r,K,J}}}{p^{2r}}\frac{\dd t_{K'}}{t_{K'}}\wedge \frac{\dd p}{p}}\in U_p.
\end{align*} 
Thus, for any $l\geq 1$, $h_l\in \F[x]_{< d}$ and $I\in ({T}_p)_m$,  we have: $ {\overline{\frac{h_l}{p^l}\frac{\dd t_I}{t_I}}}\in U_p+L_0+L_0\wedge \overline{\frac{\dd p}{p}}.$
         Therefore,   $ {S_p+S_p\wedge \overline{\frac{\dd p}{p}}\subseteq U_p+L_0+L_0\wedge \overline{\frac{\dd p}{p}}}$.

(ii) Suppose that $p$ is separable of degree $d$. By \cite[Proposition 3.8(2)]{ex},  we have the equality$${S_p+S_p\wedge\overline{\frac{\dd x}{x}}+L_0\wedge \overline{\frac{\dd x}{x}}= S_p+S_p\wedge \overline{\frac{\dd p}{p}}+L_0\wedge \overline{\frac{\dd x}{x}}}.$$By the previous part, $ S_p+S_p\wedge \overline{\frac{\dd p}{p}}\subseteq U_p+L_0+L_0\wedge \overline{\frac{\dd p}{p}}.$
Combining these two relations we have :\[S_p+S_p\wedge\overline{\frac{\dd x}{x}}\subseteq U_p+ L_0+ L_0\wedge \overline{\frac{\dd x}{x}}+L_0\wedge \overline{\frac{\dd p}{p}}.\]

(iii) Suppose that $p$ is inseparable of degree $d$. The generators of $\tilde{S}_p+\tilde{S}_p\wedge \overline{\frac{\dd p}{p}}$ are $ \overline{ \frac{h_l}{p^l}\frac{\dd t_I}{t_I}}$, where  $I\in ({T}_p)_m$ and $l\geq 0$. Moreover, $h_l\in \F[x]_{< d}$ for $l\geq 1$ and $h_0\in \F[x]$.
 Now for $l=0$, we have:
         \begin{equation*}
             \overline{ h_0\frac{\dd t_I}{t_I}}\in \begin{cases}
             L_0\wedge \overline{\frac{\dd x}{x}\wedge \frac{\dd p}{p}}& \text{if}\; I_x=1,I_{\n}=1,\\ L_0\wedge \overline{\frac{\dd p}{p}} & \text{if}\; I_x=0,I_{\n}=1,\\L_0\wedge \overline{\frac{\dd x}{x}} & \text{if}\; I_x=1, I_{\n}=0,\\
                 L_0 & \text{if}\; I_x=0,I_{\n}=0.
                 \end{cases}
         \end{equation*}
 By  (\ref{fpl,equation}), for any $l\geq 1$ and $h_l\in \F[x]_{< d}$, we have in $H_2^{m+1}(F)$: \begin{equation*}
                  \overline{ \frac{h_l}{p^l}\frac{\dd t_I}{t_I}}= \sum_{K\in ({T}_p)_m\setminus (\overline{T}_p)_m}\overline{a_K\frac{\dd t_K}{t_K}}+\sum_{L\in (\overline{T}_p)_m}\overline{a_L\frac{\dd t_L}{t_L}}+\sum_{M\in ({T}_p)_m}\overline{f_M\frac{\dd t_M}{t_M}},
              \end{equation*}
              where  $f_M\in \F[x]$ and
\[\begin{cases}a_K=\sum\limits_{\{J\in \overline{T}_p\mid J+K> K\}} \sum\limits_{r\geq 1}\frac{\overline{t^Jv^2_{r,K,J}}}{p^{2r}},\\a_L= \sum\limits_{\{ J\in {T}_p\mid t^J=t^{J'}p\}}\textcolor{black}{\sum\limits_{r\geq 0}\frac{\overline{t^{J'}u^2_{r,L,J}}}{p^{2r+1}}}+ \sum\limits_{\{ J\in \overline{T}_p\mid J+L> L\}}\sum\limits_{r\geq 1}\frac{\overline{ t^J v^2_{r,L,J}}}{p^{2r}},
\end{cases}\]for $u_{r,I,J},$  $v_{r,I,J}\in \F[x]_{< d}$.  Now $ \sum\limits_{M\in ({T}_p)_m}\overline{f_M\frac{\dd t_M}{t_M}}$ is done by the case $l=0$.\\



By Definition \ref{Up2},  $\overline{a_L\frac{\dd t_L}{t_L}}\in U_p$  for each $L\in (\overline{T}_p)_m$.
Moreover, by the similar  way  of  (i) we can prove that for each $K\in (T_p)_m\setminus (\overline{T}_p)_m$,   $\overline{a_K\frac{\dd t_K}{t_K}}\in U_p$.
 Hence, for any  $l\geq 1$, $h_l\in \F[x]_{< d}$ and $I\in ({T}_p)_m$, we have : $$ \overline{ \frac{h_l}{p^l} \frac{\dd t_I}{t_I}}\in U_p+L_0+L_0\wedge \overline{\frac{\dd x}{x}}+L_0\wedge \overline{\frac{\dd p}{p}}+L_0\wedge \overline{\frac{\dd x}{x}\wedge \frac{\dd p}{p}}.$$
Therefore, we prove  $ \tilde{S}_p+\tilde{S}_p\wedge \overline{\frac{\dd p}{p}}\subseteq  U_p+L_0+L_0\wedge \overline{\frac{\dd x}{x}}+L_0\wedge \overline{\frac{\dd p}{p}}+L_0\wedge \overline{\frac{\dd x}{x}\wedge \frac{\dd p}{p}}$.

(iv) Suppose that $p$ is inseparable of degree $d$. By \cite[Proposition 3.8(3)]{ex}, we have:
         
         $$ S_p+S_p\wedge  \overline{\frac{\dd x}{x}}+L_0+L_0\wedge \overline{\frac{\dd x}{x}}= \tilde{S}_p+\tilde{S}_p\wedge \overline{\frac{\dd p}{p}}+L_0+L_0\wedge \overline{\frac{\dd x}{x}}.$$ Now using the previous part (iii),  we get:

         $$S_p+S_p\wedge  \overline{\frac{\dd x}{x}}\subseteq  U_p+L_0+L_0\wedge \overline{\frac{\dd x}{x}}+L_0\wedge \overline{\frac{\dd p}{p}}+L_0\wedge\overline{\frac{\dd x}{x}\wedge \frac{\dd p}{p}}. $$
         
(v) Recall from Theorem \ref{T23} that each element $\phi\in W_1(H_2^{m+1}(F_p))$ can be expressed as \begin{align*}\phi&= \sum_{K\in ({T}_p)_m\setminus (\overline{T}_p)_m \atop J\in \overline{T}_p, \,J+K> K} \sum_{r\geq 1}\overline{ \frac{\overline{t^Jv^2_{r,K,J}}}{p^{2r}}\frac{\dd t_K}{t_K}}+ \sum_{L\in (\overline{T}_p)_m, J\in {T}_p\atop t^J=t^{J'}p}\sum_{r\geq 0}\overline{ \frac{\overline{ t^{J'}u^2_{r,L,J}}}{p^{2r+1}}\frac{\dd t_L}{t_L}}\;+\\&\sum_{L\in (\overline{T}_p)_m, J\in \overline{T}_p\atop J+L> L}\sum_{r\geq 1}\overline{ \frac{\overline{ t^{J}v^2_{r,L,J}}}{p^{2r}}\frac{\dd t_L}{t_L}}+ \phi'\wedge \overline{\frac{\dd p}{p}}, \end{align*} for unique $u_{r,I,J}, v_{r,I,J}\in \F[x]_{<\deg p}$. Note that  the second and third sums of right side are the generators of $S'_{p,r}$ \textcolor{black}{for some $r\geq 1$}.  For the first sum of right side, since $J+K> K$ and $J_{\n}=0, K_{\n}=1$ , it follows that $J+K'> K'$, where $t^K=t^{K'}p$. Hence, these elements are the second type generators of $S'_{p,r}$ for $r$ even.   The uniqueness of $u_{r,I,J}$ and $v_{r,I,J}$ implies that $U_p=\bigoplus\limits_{r\geq 1} S'_{p,r}$. Consequently, we have $U_p^0=\bigoplus\limits_{r\geq 1} S_{p,r}^0$ when $p$  \textcolor{black}{is inseparable}.

Now suppose $p=\frac{1}{x}$. Note that the generators of $S'_{\frac{1}{x},r} $ appear in the expression of the form $\psi$ given in \cite[Theorem 4.2(2)]{ex}. Moreover, $\psi$ is uniquely decomposed there. Hence, 
$U_{\frac{1}{x}}=\bigoplus_{r\geq 1} S'_{\frac{1}{x},r}$ is a direct sum.

(vi) If   $p=\frac{1}{x}$,  then this is a consequence of \cite[Theorem 4.2(2)]{ex}. For other $p,$ it comes from  Theorem \ref{T23}.
     \end{proof}

We define some maps to introduce the  transfer maps $s_{p}^*$, where $p$ is a monic irreducible polynomial of degree $d$ or $p=1/x$:
\medskip

(1) Let $\theta_p: W_1(H_2^{m+1}(F_p))\xrightarrow{} H_2^m(\F(p))$ be the map given as follows: For every $\phi\in W_1(H_2^{m+1}(F_p))$, let 
$\theta_p(\phi)$ be the unique element of $H_2^m(\F(p))$ such that $ \phi\equiv  \theta_p(\phi)\wedge \overline{\frac{\dd p}{p}}\pmod{\overline{U_p}}.$

(2) For $p\neq 1/x$, let $t_p: \F(p)\xrightarrow{}\F$ be the $\F$-linear map defined by: $ t_p(x^i)=0$ for $0\leq i\leq d-2$, and $t_p(x^{d-1})=1$. Let $t_p': W_q(\F(p))\xrightarrow{} W_q(\F)$ be the Scharlau transfer induced by $t_p$ \cite[Section 20]{El}\label{transfer}.
Now by Lemma  \ref{L41},  we have $ t_p'( I_q^m(\F(p)))\subset I_q^m(\F)$, for all $m\in \N.$  So we obtain a  map $t_p'': \overline{I}_q^{m}(\F(p))\xrightarrow{} \overline{I}_q^{m}(\F)$ induced from $t_p'$, for all $m\in\N$. Moreover,  $t_p''$ induces   the  map $t_p^* : H_2^{m}(\F(p))\xrightarrow{} H_2^{m}(\F)$ defined by \[  t_p^*= {f(\F)}_{m}^{-1}\circ t_p''\circ{f(\F(p))}_{m},\] where $  f(\F(p))_{m}$, $f(\F)_m$ are the  Kato's isomorphisms given in (\ref{Ke}) over $\F(p)$ and $\F$, respectively.

(3) For $p=1/x$, we take $t_p^*$ the identity since $\F(p)$ is isomorphic to $\F$.\qed
\medskip

\noindent{\bf Notation.} For each $h\in \F[x]$, let $h_c$ denote its constant part.

\begin{defn}\label{dt} {\bf (Transfer maps.)}

\noindent{(i)} Suppose that $p$ is  separable or $p=\frac{1}{x}$, then the transfer $s_p^*: W_1(H_2^{m+1}(F_p))\xrightarrow{} H_2^m(\F)$ is  defined by:$$s_p^*(\phi)= t_p^*(\theta_p(\phi))$$for $\phi \in W_1(H_2^{m+1}(F_p))$. Note that $s_{\frac{1}{x}}^*$ is nothing but the map $\eta$ defined in \cite[Theorem 4.9]{ex}.

\noindent{(ii)} Suppose that $p$ is inseparable. By Theorem \ref{T23},  we can write $ \phi-\theta_p(\phi)\wedge \overline{\frac{\dd p}{p}}$ modulo $\overline{U_p^0}$, uniquely as follows:\begin{align*} \textcolor{black}{\sum\limits_{I\in (\overline{T}_p)_m\atop J\in \overline{T}_p,  I_x=1}\sum\limits_{i\geq 0}\overline{ \frac{\overline{ t^J s_{{2i+1},I,J}^2}}{p^{2i+1}}\frac{\dd t_I}{t_I}}\;+\sum\limits_{I\in (\overline{T}_p)_m\atop  I_x=1}\sum_{J\in \overline{T}_p\atop J+I> I}\sum\limits_{i\geq 1}\overline{ \frac{\overline{ t^J s_{2i,I,J}^2}}{p^{2i}}\frac{\dd t_I}{t_I}}}\\+\textcolor{black}{\sum\limits_{I\in (\overline{T}_p)_{m-1}\atop I_x=1}\sum_{J\in \overline{T}_p\atop J+I> I} \sum\limits_{i\geq 1} \overline{ \frac{\overline{t^Js^2_{{2i},I,J}}}{p^{2i}}\frac{\dd t_I}{t_I}\wedge \frac{\dd p}{p}},}  \end{align*} where $ s_{2i,I,J}, s_{2i+1,I,J}\in \F[x]_{< d}$. For each $I\in (\overline{T}_p)_m$ with $I_x=1$, define $I'\in (\overline{T}_p)_{m-1}$ such that $t^{I}=t^{I'}x$ for all $m\geq 1$.
Then, the transfer  $s_p^*: W_1(H_2^{m+1}(F_p))\xrightarrow{} H_2^m(\F) $ is a homomorphism defined by \textcolor{black}{ \begin{align*}  s_p^*(\phi)&= \sum\limits_{I\in (\overline{T}_p)_m\atop J\in \overline{T}_p,  I_x=1}\sum\limits_{i\geq 0}\overline{ \frac{\left(\overline{ t^J s_{2i+1,I,J}^2}\right)_c}{p_c^{2i+1}}\frac{\dd t_{I'}}{t_{I'}}}+\sum\limits_{I\in (\overline{T}_p)_m\atop I_x=1}\sum_{J\in \overline{T}_p\atop J+I> I}\sum\limits_{i\geq 1}\overline{ \frac{\left(\overline{ t^J s_{2i,I,J}^2}\right)_c}{p_c^{2i}}\frac{\dd t_{I'}}{t_{I'}}}\\&+\sum\limits_{I\in (\overline{T}_p)_{m-1}\atop I_x=1}\sum_{J\in \overline{T}_p\atop J+I> I} \sum\limits_{i\geq 1} \overline{ \frac{\left(\overline{t^Js^2_{2i,I,J}}\right)_c}{p^{2i}_c}\frac{\dd t_{I'}}{t_{I'}}\wedge \frac{\dd p_c}{p_c}} +t_p^*(\theta_p(\phi)). \end{align*}}
\end{defn}

\section{The reciprocity law for $ L_0+ L_0\wedge \overline{\frac{\dd p}{p}}$}\label{S4}
Our aim in this section is to prove the reciprocity law for $L_0+L_0\wedge \overline{\frac{\dd p}{p}}$, which means ${\oplus_{q}s^*_{q}( \partial_q(\phi))=0}$ for all $\phi\in L_0+L_0\wedge \overline{\frac{\dd p}{p}}$. Moreover, when $p$ is inseparable, we extend this law  for   $L_0\wedge \overline{\frac{\dd x}{x}\wedge \frac{\dd p}{p}}$.

First of all we introduce some notations. Let $ p=\sum_{i=0}^{d} p_ix^{d-i}$ be a monic irreducible polynomial ($p_0=1$). From now on, we work in $\F(x)$ and $\F(p)$ and use $x$ to denote the  elements of both fields.  Let us define $\gamma_i\in \F$ such that$$x^{d+i-1}= \gamma_ix^{d-1}+ g_i\in \F(p),$$where  $\deg g_i\leq d-2$. It is clear that $ \gamma_0=1$ and $\gamma_1= p_1.$  By \cite[Section 5]{AJ}, we have the relation \begin{equation}\label{g1}
     \gamma_i= \gamma_{i-1}p_1+\gamma_{i-2}p_2+\ldots+ \gamma_0p_i\quad \text{ for $1\leq i\leq d$}.
 \end{equation}
For $i>d,$ we have  \begin{align*}
    x^{d+i-1}&=p_1x^{d+i-2}+p_2x^{d+i-3}+\ldots+ p_dx^{i-1}\in \F(p)\\&= p_1( \gamma_{i-1}x^{d-1}+g_{i-1})+p_2(\gamma_{i-2}x^{d-1}+g_{i-2})+\ldots + p_d(\gamma_{i-d}x^{d-1}+g_{i-d})\\&= ( p_1\gamma_{i-1}+p_2\gamma_{i-2}+\ldots +p_d\gamma_{i-d})x^{d-1}+G,
\end{align*}
where $G$ is the sum of the remaining terms, so $\deg G\leq d-2.$ Therefore, \begin{equation}\label{g2}
\gamma_{i}= \gamma_{i-1}p_1+\gamma_{i-2}p_2+\ldots+ \gamma_{i-d}p_d\quad \text{ for $i>d$}.
    \end{equation}
Moreover, we take $\gamma_i=0$ for $i<0.$ Therefore,  by (\ref{g1}) and (\ref{g2}) we have : \begin{equation}\label{g3}
    \gamma_i=\sum_{k=1}^{d} \gamma_{i-k}p_k\quad \text{for all $i\geq 1$.}
\end{equation}
    From  $\gamma_i$, we get the relation:
    \begin{equation}\label{re1}
        (1+p_1x+p_2x^2+\ldots+p_dx^d)(1+\gamma_1x+\gamma_2x^2+\ldots)=1\in \F((x)).
    \end{equation}
    Similarly, we also have:
    \begin{equation}\label{re}
        (1+p_1x^{-1}+p_2x^{-2}+\ldots +p_dx^{-d})(1+\gamma_1x^{-1}+\gamma_2x^{-2}+\ldots)=1\in \F(x)_{\frac{1}{x}}.
    \end{equation}
    Moreover, $p= x^d(1+p_1x^{-1}+\ldots +p_dx^{-d})$. 
    Therefore, from (\ref{re}) we get:
    \begin{equation}\label{pi}
        p^{-1}= x^{-d}( 1+\gamma_1x^{-1}+\gamma_2x^{-2}+\ldots )\in \F(x)_{\frac{1}{x}}.
    \end{equation}

We prove the following lemma, which directs the well-\textcolor{black}{definedness} of the transfer map $t_p^*$, for $p$ monic irreducible.
\vspace{2mm}

\begin{lemma}\label{L41} Let $p$ be a monic irreducible polynomial, and $t_p,$ $t_p'$ as defined in Page \pageref{transfer}. Then, we have $t_p'( I_q^m(\F(p)))\subseteq I_q^m(\F)$ for all $m\geq 1$.
\end{lemma}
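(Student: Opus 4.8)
The plan is to reduce to Pfister generators and then argue by induction on $m$, with the projection formula (Frobenius reciprocity) for Scharlau transfers \cite{El} doing most of the work. Since $t_p'$ is additive and $I_q^m(\F(p))$ is generated as an abelian group by the $m$-fold quadratic Pfister forms $\langle\langle a_1,\ldots,a_{m-1};b]]$ (with $a_i\in\F(p)^*$ and $b\in\F(p)$), it suffices to show that the transfer of each such generator lies in $I_q^m(\F)$. The case $m=1$ is clear, since $t_p'$ maps $W_q(\F(p))=I_q^1(\F(p))$ into $W_q(\F)=I_q^1(\F)$ by definition. For the inductive step I would peel off one bilinear slot, writing $\langle\langle a_1,\ldots,a_{m-1};b]]=\langle\langle a_1\rangle\rangle_b\otimes\rho$ with $\rho=\langle\langle a_2,\ldots,a_{m-1};b]]\in I_q^{m-1}(\F(p))$, so that $t_p'(\langle\langle a_1,\ldots,a_{m-1};b]])=t_p'(\rho)+t_p'(a_1\rho)$. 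If $a_1\in\F^*$, Frobenius reciprocity gives $t_p'(\langle\langle a_1\rangle\rangle_b\otimes\rho)=\langle\langle a_1\rangle\rangle_b\otimes t_p'(\rho)$, and since $t_p'(\rho)\in I_q^{m-1}(\F)$ by the induction hypothesis while $\langle\langle a_1\rangle\rangle_b\in I(\F)$, the product lies in $I_q^m(\F)$; so the slots coming from $\F$ are harmless.

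The content therefore sits in the slots $a_i$ that genuinely involve $x$. Observe that $t_p'(a_1\rho)=s_*(\rho)$, where $s\colon\F(p)\to\F$ is the twisted functional $s(y)=t_p(a_1y)$ and $s_*$ its Scharlau transfer; thus the inductive step amounts to the \emph{degree gain}
\[t_p'(\rho)-s_*(\rho)\in I_q^m(\F)\qquad(\rho\in I_q^{m-1}(\F(p))),\]
i.e.\ to the assertion that two transfers of $\F(p)/\F$ attached to functionals differing by a unit agree on $I_q^{m-1}(\F(p))$ modulo $I_q^m(\F)$. This cannot be obtained formally: since $t_p'(\rho)-s_*(\rho)=t_p'(\langle\langle a_1\rangle\rangle_b\otimes\rho)$ and $\langle\langle a_1\rangle\rangle_b\otimes\rho\in I_q^m(\F(p))$, the assertion is equivalent to the statement being proved, so the gain must come from an explicit computation of the transfer rather than from a manipulation of the filtration. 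Because the difference depends only on the square class of $a_1$ and composes multiplicatively (replacing $t_p$ by $s$ and iterating), it is enough to treat $a_1$ running over a generating set of $\F(p)^*/\F(p)^{*2}$; the classes coming from $\F^*$ are settled above, leaving the classes involving $x$ for the fixed $2$-bases of $\F(p)$ recorded in the Notations of Section \ref{S2}.

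The main obstacle is exactly this explicit degree gain for the $x$-slots, and it is where the relations (\ref{g1})--(\ref{g3}) enter. Working in the power basis $1,x,\ldots,x^{d-1}$ of $\F(p)$, the functional $t_p$ reads off the coefficient of $x^{d-1}$, and twisting by $a_1$ and reducing the resulting powers of $x$ back into this basis via the recursion $\gamma_i=\sum_{k=1}^d\gamma_{i-k}p_k$ produces only $\F$-linear recombinations of these coordinates. The prototype of the required computation is the degree-one case: unwinding Kato's isomorphism (\ref{Ke}), the step $I_q^1\to I_q^2$ is equivalent to $t_p(\wp(\F(p)))\subseteq\wp(\F)$, which one verifies directly by expanding $t_p(y^2+y)$ for $y=\sum c_ix^i$ and using the $\gamma_i$ to evaluate $t_p(x^{2i})$. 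I expect the same bookkeeping, carried out slot by slot, to yield the gain in every degree and to exhibit $t_p'(\langle\langle a_1,\ldots,a_{m-1};b]])$ as an explicit sum of $m$-fold Pfister forms over $\F$. Assembling these computations closes the induction and gives $t_p'(I_q^m(\F(p)))\subseteq I_q^m(\F)$ for all $m\ge1$; controlling this degree gain for the $x$-slots is the hard part of the argument.
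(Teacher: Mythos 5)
You have correctly located the difficulty---Frobenius reciprocity disposes of the bilinear slots coming from $\F^*$, and the slots genuinely involving $x$ require an explicit computation that cannot be extracted formally from the filtration---but the proposal stops exactly where the proof has to begin: the ``degree gain'' for the $x$-slots is asserted as an expectation (``I expect the same bookkeeping \ldots to yield the gain in every degree''), not carried out, and that computation is the entire content of the lemma. Moreover, the reduction you set up does not make the computation finite or tractable. In your induction on $m$, after peeling off a slot $a_1$ involving $x$, the remaining factor $\rho$ may still carry several $x$-involving slots, so the twisted functional $s_*$ does not let you invoke the induction hypothesis; as you yourself observe, the inductive step is then equivalent to the statement being proved. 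The paper avoids this circularity by first \emph{normalizing the generators}: using the relations $[1,h_1]+[1,h_2]=[1,h_1+h_2]$, $\langle\langle f+g;h]]=\langle\langle f;fh(f+g)^{-1}]]+\langle\langle g;gh(f+g)^{-1}]]$ and, crucially, $\langle\langle ax,bx\rangle\rangle=\langle\langle ab,ax\rangle\rangle$, it shows that $I_q^m(\F(p))$ is additively generated by Pfister forms $\langle\langle a_1,\ldots,a_{m-2},a_{m-1}x^l;a_mx^k]]$ with $a_i\in\F^*$, so that at most one bilinear slot and the quadratic slot involve $x$. A single application of Frobenius reciprocity together with \cite[Corollary 21.5]{El} then reduces the whole lemma to the rank-two assertion $t_p'(\langle\langle ax;bx^k]])\in I_q^2(\F)$; no induction on $m$ and no iterated degree gain is needed.

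The remaining computation is also not the one you sketch. The paper treats the two cases separately: for $p$ separable the lemma is quoted from \cite[Corollary 2.5]{mu}, and the explicit computation is only performed for $p$ inseparable, where it uses in an essential way that $d$ is even and $p_i=0$ for $i$ odd (hence $\gamma_1=p_1=0$), giving $t_p'([1,bx^k])=0$ and $t_p'([x,bx^{k-1}])=\sum_{i=1}^{k}\langle 1,p_i\rangle_b\otimes[1,bp_i\gamma_{k-i}]\in I_q^2(\F)$ via \cite[Lemma 5.2]{AJ} and the recursion (\ref{g1}). Your proposed prototype $t_p(\wp(\F(p)))\subseteq\wp(\F)$ is not the relevant model (it concerns the quotient $W_q/I_q^2$ rather than the passage from $I_q^{m-1}$ to $I_q^m$), and without the normalization of generators and the inseparability hypothesis the ``slot by slot bookkeeping'' has no finite list of cases to land on. As it stands the proposal is a correct diagnosis of where the work lies, together with a reduction that does not close; the lemma is not proved.
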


\begin{proof}
We treat two cases depending on  the separability of $p$.\\
    
     \noindent{Case 1 :} If $p$ is separable, then we are done by \cite[Corollary 2.5]{mu}.
     
\noindent{Case 2 :} Suppose $p=\sum\limits_{i=0}^{d}p_{i}x^{d-i}$ is inseparable ($p_0=1$). Using  the  three relations in $W_q(\F(p))$:
\begin{itemize}
\item[(a)] $[1,h_1]+[1,h_2]=[1,h_1+h_2]$,
\item[(b)] $\langle \langle f+g ; h]]= \langle \langle f ;{fh}{(f+g)}^{-1}]]+\langle\langle g ;{gh}{(f+g)}^{-1}]]$,
\item[(c)] $\langle\langle ax,bx\rangle\rangle=\langle \langle ab,ax\rangle\rangle$,
\end{itemize}\noindent{we easily see that $I^m_q(\F(p))$ is {additively} generated by  the $m$-fold quadratic  Pfister forms ${ \langle \langle a_1,a_2,\ldots, a_{m-2}, a_{m-1}x^l ; a_mx^k]]},$ where  $l\in \{0,1\}$, $0\leq k\leq d-1$, $a_i\in \F^*$ for $1\leq i\leq m-1$ and $a_m\in \F$.}

Combining the Frobenius reciprocity \cite[Proposition 20.2]{El} with \cite[Corollary 21.5]{El}, we reduce to prove that $t'_p(\langle \langle ax^l ; bx^k]])\in I^2_q(\F)$ for every $a\in \F^*$ and $b\in \F$, when $1\leq k\leq d-1$ and $l=1$. In fact, we have \textcolor{black}{ $\langle \langle ax; bx^k]]=[1, bx^k] + a[x, bx^{k-1}]$}, and we will compute the transfer separately for the $2$-dimensional forms.

As $p$ is inseparable, $d$ is even and $p_i=0$ for $i$ odd. Let \textcolor{black}{us} take $d=2e$ for a suitable $e\geq 1$. 
        By \cite[Lemma 5.2]{AJ}, we have
        \begin{align*}
            t_p'([1,bx^k])&= \sum\limits_{i=1}^{e-1}[p_{2(e-i)+1}, b\gamma_{k+2i-d-1}]+[\gamma_1, b\gamma_{k-1}]\\ &= [\gamma_1, b\gamma_{k-1}]\quad \textcolor{black}{(\text{since }  p_j=0, \text{ for } j\hspace{1mm} \text{ odd})}\\&=0\in W_q(\F)\quad \textcolor{black}{(\text{since } \gamma_1=p_1=0)}.
        \end{align*}

Now we compute $t_p'([x, bx^{k-1}])$ for $b\in \F$ and $1\leq k\leq d-1$. Recall that $\gamma_1=p_1=0$ and $\gamma_2= p_1\gamma_1+p_2\gamma_0=p_2.$ By \cite[Lemma 5.2]{AJ}, we have 
 \begin{align*}
            t_p'([x, bx^{k-1}])&= [1, b(\gamma_k+ p_1^2\gamma_{k-2})]+ \sum_{i=1}^{e-1}[p_{2(e-i)+2}, b\gamma_{k-1+2i-d-1}]+[\gamma_2, b\gamma_{k-2}]\\&=  [1,b\gamma_k]+\sum_{i=1}^{e}[p_{2i}, b\gamma_{k-2i}]\quad \textcolor{black}{(\text{as } d=2e, p_1=0, p_2=\gamma_2)}
            \\&=[1,b\gamma_k]+ \sum_{i=1}^{k}[p_i,b\gamma_{k-i}]\quad \textcolor{black}{(\text{as } p_i=0 \text{ for i odd and }  \gamma_i=0 \text{ for } i<0) }  \\&= [1,b\sum_{i=1}^{k} p_i \gamma_{k-i}]+ \sum_{i=1}^{k}[p_i,b\gamma_{k-i} ]\quad \textcolor{black}{(\text{by } (\ref{g1}))}\\&= \sum_{i=1}^{k} [1,bp_i\gamma_{k-i}]+\sum_{i=1}^{k}[p_i,b\gamma_{k-i} ]= \sum_{i=1}^{k} \langle 1,p_i\rangle \otimes [1,bp_i\gamma_{k-i}]\in I^2_q(\F).
\end{align*}This finishes the proof of the lemma.
\end{proof}

The following lemma will be  needed to prove the required  reciprocity law given in Lemma \ref{L44}.
\begin{lemma}\label{L42}Let  $p=\sum\limits_{i=0}^{d}p_ix^{d-i}$ be a monic irreducible polynomial of degree $d$, and $t_p'$ the transfer map as defined in Page \pageref{transfer}.\\(i) If $i\geq 0$ and $a\in \F$, then ${t_p'([1,ax^i])=\begin{cases}\sum\limits_{j=0}^{e-1}[p_{2j+1},a\gamma_{i-2j-1}] & \text{if}\;\; d=2e,
        \\ \sum\limits_{j=0}^{e} [p_{2j}, a\gamma_{i-2j}] & \text{if}\;\;d=2e+1.
            \end{cases}}$\\(ii) If $i\geq 1$ and $a\in \F$, then $  t_p'(\langle\langle x;ax^i]])=
                \sum\limits_{j=1}^{d}\langle\langle p_j;ap_j\gamma_{i-j}]].$\\(iii) If $p$ is inseparable, then  $ t_p'( \langle\langle x;a]])= \langle\langle p_d;a]]$ for any $a\in \F$.
    
\end{lemma}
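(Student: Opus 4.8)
The plan is to derive all three formulas from the single master transfer computation recorded in \cite[Lemma 5.2]{AJ}, which expresses $t_p'$ of a binary form $[1,ax^i]$ as a sum of binary forms $[p_j, a\gamma_{i-j}]$ indexed by the coefficients of $p$. The three parts of Lemma \ref{L42} are then obtained by specializing and reorganizing this sum, using the separability/parity structure of $p$ and the additive and Frobenius relations in $W_q(\F)$ recalled in the proof of Lemma \ref{L41}.

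For part (i), the starting point is \cite[Lemma 5.2]{AJ} applied to $[1,ax^i]$. Writing $d=2e$ or $d=2e+1$, I would split the resulting sum $\sum_j [p_j, a\gamma_{i-j}]$ according to the parity of the index $j$ and then discard the terms that vanish. The key simplification is that $[p_j, \ast]$ with $p_j=0$ contributes nothing, and that the surviving indices organize exactly into the two stated closed forms depending on whether $d$ is even or odd; the distinction between the two cases is just a matter of which endpoint of the summation range is attained. This part is essentially a bookkeeping specialization and should be routine once the master formula is in hand.

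For part (ii) I would exploit the identity $\langle\langle x;ax^i]] = [1,ax^i] + x\cdot[1,ax^{i-1}]$ (equivalently, the splitting of the Pfister form used in the proof of Lemma \ref{L41}), apply $t_p'$ termwise using part (i), and then invoke the Frobenius reciprocity \cite[Proposition 20.2]{El} to pull the scalar through the transfer. The recursion \eqref{g3} relating the $\gamma_i$ to the $p_k$ is what lets me recombine the two resulting sums into a single sum of $2$-fold Pfister forms $\langle\langle p_j; ap_j\gamma_{i-j}]]$; reassembling the binary pieces into genuine Pfister forms via the relation $\langle\langle p_j; c]] = [1,c] + p_j[1, \ast]$ is the step that requires care, and I expect the main obstacle here to be matching the index shifts so that the $\gamma$-recursion closes the sum cleanly rather than leaving boundary terms.

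Part (iii) is the degenerate case $i=0$ of the same circle of ideas: for $p$ inseparable one has $p_j=0$ for all odd $j$ (in particular $p_1=\gamma_1=0$), so in the expansion of $t_p'(\langle\langle x;a]])$ every intermediate $\gamma$-term collapses and only the top coefficient $p_d$ survives, yielding $\langle\langle p_d;a]]$. I would verify this directly by running the computation of part (ii) with $i=0$ and tracking which $\gamma_{i-j}$ are forced to vanish. The whole argument is computational, and the single genuine difficulty across the three parts is the index management in (ii); the parity vanishing and the $\gamma$-recursion do the rest of the work.
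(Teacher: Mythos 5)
Part (i) of your plan coincides with the paper's proof: it is a direct reindexing of the first formula of \cite[Lemma 5.2]{AJ}, splitting on the parity of $d$. However, there are two genuine gaps in parts (ii) and (iii).

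For (ii), the correct decomposition is $\langle\langle x;ax^i]]=[1,ax^i]+[x,ax^{i-1}]$ (note that $x[1,ax^i]\cong[x,ax^{i-1}]$, not $x\cdot[1,ax^{i-1}]$), and the second summand cannot be handled by ``part (i) plus Frobenius reciprocity'': Frobenius reciprocity only pulls through $t_p'$ bilinear factors defined over the base field $\F$, and $x\in\F(p)\setminus\F$. The paper computes $t_p'([x,ax^{i-1}])$ from the \emph{second}, structurally different formula of \cite[Lemma 5.2]{AJ} (the one with leading term $[1,a(\gamma_i+p_1^2\gamma_{i-2})]$ and the terms $[p_{2(e-j)+2},\cdot]$, $[\gamma_2,\cdot]$), which is not a consequence of the $[1,\cdot]$ formula; only after summing the two outputs does the recursion (\ref{g3}) let one insert $[1,\sum_{j=0}^{d}ap_j\gamma_{i-j}]=0$ and regroup into the Pfister forms $\langle\langle p_j;ap_j\gamma_{i-j}]]$. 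Your plan as stated omits this second transfer formula entirely.

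For (iii), the claim is emphatically \emph{not} the $i=0$ specialization of (ii): substituting $i=0$ into the closed formula of (ii) gives $\sum_{j=1}^{d}\langle\langle p_j;ap_j\gamma_{-j}]]=0$, since $\gamma_{-j}=0$ for $j\geq 1$, whereas the correct answer $\langle\langle p_d;a]]$ is nonzero in general. The recursion $\gamma_i=\sum_{k=1}^{d}p_k\gamma_{i-k}$ that closes the sum in (ii) holds only for $i\geq 1$ and fails at $i=0$ (where $\gamma_0=1$). Moreover $[x,ax^{-1}]$ carries a negative exponent, so the transfer formula of \cite[Lemma 5.2]{AJ} cannot be applied to it directly. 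The paper's proof requires the additional step of rewriting $x^{-1}=p_d^{-1}\sum_{i=0}^{d-1}p_ix^{d-i-1}$ in $\F(p)$ using $p(x)=0$, transferring each resulting term, and then invoking the recursions for $\gamma_d$ and $\gamma_{d-2}$ to collapse the result to $[1,a]+[p_d,ap_d^{-1}]=\langle\langle p_d;a]]$. This substitution is the actual content of part (iii) and is absent from your proposal.
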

\begin{proof} (i) Let $i\in \N$ and $a\in \F$. Now we have two cases depending on $d$ is even or odd.
    
\noindent{Case 1:} Suppose $d=2e$ for some $e\geq 1$. By \cite[Lemma 5.2]{AJ},  we have
\begin{align*}
t_p'([1,ax^i])&= \sum\limits_{l=1}^{e-1}[p_{2(e-l)+1}, a\gamma_{i+2l-d-1}]+[\gamma_1, a\gamma_{i-1}] \\&= \sum\limits_{l=1}^{e-1}[p_{2(e-l)+1}, a\gamma_{i+2l-2e-1}] +[p_1,a\gamma_{i-1}]\quad \textcolor{black}{ (\text{putting  }  d=2e, \gamma_1=p_1)}\\&= \sum\limits_{j=0}^{e-1}[ p_{2j+1}, a\gamma_{i-2j-1}].
\end{align*} 

\noindent{Case 2:} Suppose $d=2e+1$ for some $e\geq 0$. By \cite[Lemma 5.2]{AJ},  we have
\begin{align*}
            t_p'([1,ax^i])&=\sum\limits_{l=0}^{e}[p_{2(e-l)}, a\gamma_{i+2l-d+1}]\\&= \sum\limits_{l=0}^{e} [p_{2(e-l)}, a\gamma_{i-2(e-l)}] \quad \textcolor{black}{(\text{putting  } d=2e+1)}\\&= \sum\limits_{j=0}^{e}[p_{2j}, a\gamma_{i-2j}].
        \end{align*}

\noindent{(ii)} Let $ i\geq 1$ and $a\in \F$.  As above, we have two cases depending on $d$ is even or odd.
        
        \noindent{Case 1:} Let $d=2e$ be even for some $e\geq 1$. Now $\langle\langle x;ax^i]]= [1,ax^i]+[ x,ax^{i-1}] $.  \textcolor{black}{Therefore, using \cite[Lemma 5.2]{AJ}}, we have:
            \begin{align*}
                t_p'(\langle\langle x;ax^i]])&= t_p'([1,ax^i])+t_p'([x,ax^{i-1}])\\&= \left(\sum_{j=1}^{e-1}[p_{2(e-j)+1}, a\gamma_{i+2j-2e-1}]+[\gamma_1,a\gamma_{i-1}]\right) \\& +\left([1,a(\gamma_{i}+p_1^2\gamma_{i-2})]+\sum_{j=1}^{e-1}[p_{2(e-j)+2}, a\gamma_{i+2j-2e-2}]+[\gamma_2,a\gamma_{i-2}]\right)\\&=\sum_{j=0}^{2e}[p_j,a\gamma_{i-j}]+[1,ap_1^2\gamma_{i-2}]+[p_1^2, a\gamma_{i-2}]\quad \textcolor{black}{(\text{putting } \gamma_2= p_1^2+p_2)}\\&= \sum_{j=0}^{2e}[p_j,a\gamma_{i-j}]\quad \textcolor{black}{(\text{since } [p_1^2,a\gamma_{i-2}]=[1,ap_1^2\gamma_{i-2}])}.
            \end{align*}
 \noindent{Case 2:} Let $d=2e+1$ be odd for some $e\geq 0$. Again using \cite[Lemma 5.2]{AJ} we have 
             \begin{align*}
                t_p'(\langle\langle x;ax^i]])&= t_p'([1,ax^i])+t_p'([x,ax^{i-1}])\\&= \sum_{j=0}^{e}[p_{2(e-j)}, a\gamma_{i+2j-2e}]+\sum_{j=0}^{e}[p_{2(e-j)+1}, a\gamma_{i-1+2j-2e}]\\&= \sum_{j=0}^{2e+1}[p_j,a\gamma_{i-j}].
                \end{align*} Combining the two cases, we get ${ t_p'(\langle\langle x;ax^i]])= \sum\limits_{j=0}^{d} [p_j,a\gamma_{i-j}]}$.  Moreover, (\ref{g3})  says that $ \gamma_i=\sum\limits_{j=1}^{d}p_j\gamma_{i-j}$. Consequently, $\sum\limits_{j=0}^{d} p_j\gamma_{i-j}=0$. Therefore, we have  \begin{align*} t_p'( \langle \langle x;ax^i]])=\sum\limits_{j=0}^{d} [p_j,a\gamma_{i-j}]+ \left[1,\sum\limits_{j=0}^{d}ap_j\gamma_{i-j}\right]&=\sum\limits_{j=0}^{d} [p_j,a\gamma_{i-j}]+\sum_{j=0}^{d}[1, ap_j\gamma_{i-j}]\\&= \sum\limits_{j=1}^{d}\langle\langle p_j;ap_j\gamma_{i-j}]]\quad \textcolor{black}{(\text{since }  p_0=1)}.\end{align*} Thus, statement (ii) follows.
\medskip

\noindent{(iii)} Suppose that $p$ is inseparable. Then, $d$ is even and $p_i=0$ when $i$ is odd. Let $d=2e$ for some $e\geq 1$. Now $ \langle\langle x;a]]=[1,a]+[x,ax^{-1}]$. {We have seen in the proof of Lemma \ref{L41} that $t_p'([1,a])=0$.} Therefore, we have to prove that $ t_p'([x,ax^{-1}])=\langle\langle p_d;a]]$. Now $p=\sum\limits_{i=0}^{d}p_ix^{d-i}=0\in \F(p)$, where $p_0=1$.
Hence,  $x^{-1}= p_d^{-1}\left( \sum\limits_{i=0}^{d-1}p_ix^{d-(i+1)}\right)\in \F(p)$. Consequently, we have     \begin{align*}
    t_p'([x,ax^{-1}])= t_p'\left(\left[x,\sum_{i=0}^{d-1} ap_ip_d^{-1}x^{d-i-1}\right]\right)=  \sum_{i=0}^{d-1} t_p'\left(\left[ x,ap_ip_d^{-1}x^{d-i-1}\right]\right)\end{align*} \textcolor{black}{By \cite[Lemma 5.2]{AJ} and using $p_1=0$, we get \begin{align*}t_p'([x,ax^{-1}])&= \sum_{i=0}^{d-1}\left\{ \sum_{j=1}^{e-1}[ p_{2(e-j)+2}, ap_ip_d^{-1}\gamma_{2j-i-2}]+[\gamma_2,ap_ip_d^{-1}\gamma_{d-i-2}]\right\} \\&{+\sum_{i=0}^{d-1} [1,ap_ip_d^{-1} \gamma_{d-i}] }\\&= \underbrace{ \sum_{i=0}^{d-1} \sum_{j=1}^{e-1}[ p_{2(e-j)+2}, ap_ip_d^{-1}\gamma_{2j-i-2}]}_{B\textcolor{black}{\text{ (say)}}}+ \left[1,ap_d^{-1}\sum_{i=0}^{d-1}p_i\gamma_{d-i}\right]\\&+ \left[\gamma_2, ap_d^{-1}\sum_{i=0}^{d-1}p_i\gamma_{d-i-2}\right]\\&= [1,a]+ B.
\end{align*}}
The last equality holds because by (\ref{g3}), $ \gamma_d=\sum\limits_{i=1}^{d} p_i\gamma_{d-i}$, which implies that $ \sum\limits_{i=0}^{d-1}p_i\gamma_{d-i}=p_d\gamma_0=p_d.$ Similarly, using the recurrence relation for $\gamma_{d-2}$, we get $\sum\limits_{i=0}^{d-1}p_i\gamma_{d-i-2}=0 $. Now  we compute $B$:
\begin{align*}
    B&=\sum_{i=0}^{d-1} \sum_{j=1}^{e-1}[ p_{2(e-j)+2}, ap_ip_d^{-1}\gamma_{2j-i-2}]\\&= \sum_{j=1}^{e-1}\left[ p_{2(e-j)+2}, ap_d^{-1}\sum_{i=0}^{ 2j-2}p_i\gamma_{2j-i-2}\right] \quad \textcolor{black}{(\text{using } \gamma_i=0 \text{ for } i<0)}\\&= [p_{2e}, ap_d^{-1}p_0\gamma_0]+ \sum_{j=2}^{e-1}\left[ p_{2(e-j)+2}, ap_d^{-1}\sum_{i=0}^{2j-2}p_i\gamma_{2j-i-2}\right]\\&=[p_d, ap_{d}^{-1}]\quad \textcolor{black}{\left(\text{since }   \gamma_{2j-2}=\sum_{i=1}^{2j-2}p_i\gamma_{2j-2-i}, \text{ for all } j\in \{2,\ldots,e-1\}\right)}.
\end{align*}
 Therefore, $t_p'([x,ax^{-1}])=[1,a]+[p_d, ap_d^{-1}]=\langle\langle p_d;a]]$.
 Hence (iii)  is proved.
        \end{proof}
    In the next lemma   we  compute $s_{\frac{1}{x}}^*\circ \partial_{\frac{1}{x}}(\phi)$,  where $\phi\in L_0\wedge \overline{\frac{\dd p}{p}}$ and  $p=\sum_{i=0}^{d}p_ix^{d-i}$ is a  monic  irreducible  polynomial of degree $d$.

\begin{lemma}\label{L43}
    (i)  If ${\phi= \overline{ax^k \frac{\dd t_I}{t_I}\wedge \frac{\dd p}{p}}}$, where $a\in \F, k\geq 0$ and  $I\in (T)_{m-1}$, then \[s_{\frac{1}{x}}^*\circ\partial_{\frac{1}{x}}(\phi)=\begin{cases}
            {\sum\limits_{j=0}^{e-1} \overline{ap_{2j+1}\gamma_{k-2j-1}\frac{\dd t_I}{t_I}}} & \text{if}\;\; d=2e,\\
            {\sum\limits_{j=0}^{e} \overline{ap_{2j}\gamma_{k-2j}\frac{\dd t_I}{t_I}}} & \text{if}\;\; d=2e+1.
        \end{cases}\]
(ii) If ${\phi= \overline{ ax^k\frac{\dd t_I}{t_I}\wedge\frac{\dd x}{x}\wedge  \frac{\dd p}{p}}}$, where $ a\in \F$, $k\geq 1$ and  $I\in (T)_{m-2}$, then
        $ {s_{\frac{1}{x}}^*\circ \partial_{\frac{1}{x}} (\phi)=  \sum\limits_{i=1}^{d} \overline{ ap_i\gamma_{k-i}\frac{\dd t_I}{t_I}\wedge \frac{\dd p_i}{p_i}}}.$
    \end{lemma}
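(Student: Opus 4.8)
The plan is to reduce both parts to a single ``residue at infinity'' computation, exploiting that $\F(1/x)\cong\F$, so that $s_{1/x}^*=\theta_{1/x}$ is given by the identity transfer (Definition \ref{dt}(i)). The first step I would isolate as a reduction lemma: $s_{1/x}^*\circ\partial_{1/x}(\overline{\Theta})$ depends only on the $\overline{\frac{\dd x}{x}}$--component of $\Theta$, and if that component equals $g\,\omega\wedge\frac{\dd x}{x}$ with $g\in\F(x)$ and $\omega$ a logarithmic form over $\F$, then
\[
s_{1/x}^*\circ\partial_{1/x}\!\left(\overline{\Theta}\right)=\overline{g_{(0)}\,\omega}\in H_2^m(\F),
\]
where $g_{(0)}\in\F$ is the coefficient of $x^0$ in the $1/x$--adic expansion of $g$. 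To prove this I expand $g$ as a Laurent series in $1/x$ inside $F_{1/x}$: the terms of positive valuation (negative powers of $x$) die by Remark \ref{R21}; a term $\overline{x^n\omega\wedge\frac{\dd x}{x}}$ with $n\geq1$ odd equals $\overline{\dd(x^n\omega)}=0$ since $\omega$ is closed, while for $n$ even the relation $\overline{a^2\eta}=\overline{a\eta}$ (a consequence of $\wp$) lets me replace $x^{2s}$ by $x^s$ and iterate down to an odd power; and the $\frac{\dd x}{x}$--free contributions, together with the constant term over $\F$, lie in $\overline{U_{1/x}}$ or come from the residue field, hence vanish under $\theta_{1/x}$ by Lemma \ref{L32}(vi). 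Only $g_{(0)}$ survives.

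For part (i) I would write $\frac{\dd p}{p}=\frac{x}{p}\frac{\partial p}{\partial x}\cdot\frac{\dd x}{x}$ plus terms involving the coefficient differentials $\frac{\dd p_i}{p_i}$; the latter are $\frac{\dd x}{x}$--free and drop out by the reduction, so only $C:=ax^k\cdot\frac{x}{p}\frac{\partial p}{\partial x}$ matters. Putting $p=x^d w$ with $w=\sum_{i=0}^{d}p_i x^{-i}$ and inverting $w=u^{-1}$, $u=\sum_{j\geq0}\gamma_j x^{-j}$, via (\ref{re})/(\ref{pi}), the parity of $d-i$ gives $\frac{x}{p}\frac{\partial p}{\partial x}=u\sum_{i\not\equiv d\,(2)}p_i x^{-i}$, hence $C=a\sum_{i\not\equiv d\,(2),\,j\geq0}p_i\gamma_j\,x^{k-i-j}$. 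The reduction then yields $\overline{C_{(0)}\frac{\dd t_I}{t_I}}$ with $C_{(0)}=a\sum_{i\not\equiv d\,(2)}p_i\gamma_{k-i}$; splitting according to $d=2e$ or $d=2e+1$ reproduces the two stated sums.

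For part (ii), where $p$ is inseparable so that $\partial p/\partial x=0$ and only even powers of $x$ occur, I would instead use the absolute Kähler differential $\dd p=\sum_{i=1}^{d}x^{d-i}\dd p_i$. Since $\frac{\dd x}{x}\wedge\frac{\dd x}{x}=0$, wedging with the explicit $\frac{\dd x}{x}$ kills the $\dd x$--part of $\frac{\dd p}{p}$ and leaves $\frac{\dd x}{x}\wedge\frac{\dd p}{p}=\frac{\dd x}{x}\wedge\sum_i\frac{x^{d-i}}{p}\dd p_i$, where $\frac{x^{d-i}}{p}=x^{-i}u=\sum_j\gamma_j x^{-i-j}$. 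Reordering the wedge, $\phi=a\sum_{i,j}p_i\gamma_j\,x^{k-i-j}\bigl(\tfrac{\dd t_I}{t_I}\wedge\tfrac{\dd p_i}{p_i}\bigr)\wedge\frac{\dd x}{x}$, and applying the reduction termwise with $\omega=\frac{\dd t_I}{t_I}\wedge\frac{\dd p_i}{p_i}$ selects $j=k-i$, giving $\sum_{i=1}^{d}\overline{ap_i\gamma_{k-i}\frac{\dd t_I}{t_I}\wedge\frac{\dd p_i}{p_i}}$, as claimed.

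The step I expect to be the main obstacle is the reduction lemma itself, namely the rigorous proof that $s_{1/x}^*\circ\partial_{1/x}$ extracts exactly the constant term: this requires controlling all positive powers of $x$ modulo $\dd\Omega+\wp(\Omega)$ and modulo $\overline{U_{1/x}}$ through Lemma \ref{L32}(vi), and, in part (ii), working with absolute differentials so that the inseparability of $p$ genuinely converts $\frac{\dd p}{p}$ into the coefficient differentials $\frac{\dd p_i}{p_i}$. Once that reduction is secured, the remainder is routine bookkeeping with the series (\ref{re}) and the recurrence (\ref{g3}) for the $\gamma_i$.
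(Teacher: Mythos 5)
Your argument follows essentially the same route as the paper's: isolate the $\overline{\frac{\dd x}{x}}$--component, expand its coefficient in $\F((1/x))$ via (\ref{re})/(\ref{pi}), kill the negative powers of $x$ by Remark \ref{R21} and the positive powers and $\frac{\dd x}{x}$--free pieces through the structure of $s_{\frac{1}{x}}^*=\eta$, and read off the constant term $ap_i\gamma_{k-i}$ --- the paper's identity (\ref{feq}) is exactly your decomposition $\dd p=\frac{\partial p}{\partial x}\,\dd x+\sum_i x^{d-i}\dd p_i$ in disguise. The one point to fix is that in (ii) you restrict to inseparable $p$, whereas the lemma is stated for, and later applied (Lemma \ref{L44}(i), case (a)) to, an arbitrary monic irreducible $p$; since your computation never actually uses $\partial p/\partial x=0$ (wedging with the explicit $\frac{\dd x}{x}$ kills the $\dd x$--part of $\frac{\dd p}{p}$ in all cases), simply drop that hypothesis.
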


\begin{proof}
    (i) Recall that  $p= \sum\limits_{i=0}^{d} p_ix^{d-i}\in \F[x]$ with $p_0=1.$ Here, we use the 
 following fundamental relation over any field:
\begin{equation}\label{feq}
 \frac{\dd (u+v)}{u+v}= \frac{u}{u+v}\frac{\dd u}{u}+\frac{v}{u+v}\frac{\dd v}{v},     
 \end{equation} where $u+v\neq 0$. Now we have 
        \begin{align*}
        \phi= \overline{ax^k\frac{\dd t_I}{t_I}\wedge \frac{\dd p}{p}}&= \sum\limits_{i=0}^{d} \overline{\frac{ p_ix^{d-i}ax^k}{p}\frac{\dd t_I}{t_I}\wedge \frac{\dd (p_ix^{d-i})}{p_ix^{d-i}}}\quad (\textcolor{black}{\text{by } (\ref{feq})})\\ &= \underbrace{\sum\limits_{i=0}^{d} \overline{\frac{p_ix^{d-i}ax^k}{p} \frac{\dd t_I}{t_I}\wedge \frac{\dd p_i}{p_i}}}_{A}+ \underbrace{\sum\limits_{\textcolor{black}{d-i \text{ odd}}} \overline{\frac{p_ix^{d-i}ax^k}{p} \frac{\dd t_I}{t_I}\wedge\frac{\dd x}{x}}}_{B}.
        \end{align*}
        Moreover, over $\F(x)_{\frac{1}{x}}$, we 
 obtain \begin{align*}B&=\sum\limits_{d-i \hspace{1mm} odd} \overline{\frac{p_ix^{d-i}ax^k}{p} \frac{\dd t_I}{t_I}\wedge\frac{\dd x}{x}} \\&=\sum\limits_{d-i \hspace{1mm} odd} \overline{{ap_ix^{k-i}(1+\gamma_1x^{-1}+\gamma_2x^{-2}+\ldots)}\frac{\dd t_I}{t_I}\wedge\frac{\dd x}{x}}\quad \textcolor{black}{(\text{by  (\ref{pi})})}\\&= \sum\limits_{d-i \hspace{1mm} odd} \overline{(ap_i\gamma_{k-i}+ R_i) \frac{\dd t_I}{t_I}\wedge\frac{\dd x}{x}}\quad \textcolor{black}{(\text{by Remark  \ref{R21})}}, \end{align*} where  each $R_i\in x\F[x]$.
        Now we have two cases, $d$ is even or odd. \\
        
            \noindent{Case 1 :}  Let  $d=2e$ be even, where $e\geq 1$. Then we have  \begin{align*}
                s_{\frac{1}{x}}^*(\partial_{\frac{1}{x}}(\phi))&= s_{\frac{1}{x}}^*(\partial_{\frac{1}{x}}(B))\quad \textcolor{black}{ (\text{since }   s_{\frac{1}{x}}^*(\partial_{\frac{1}{x}}(A))=0)}.\end{align*} \textcolor{black}{Note that $ s_{\frac{1}{x}}^*=\eta$, the form given in \cite[Theorem 4.9]{ex}. Therefore, we  get \begin{align*} s_{\frac{1}{x}}^*(\partial_{\frac{1}{x}}(\phi))= s_{\frac{1}{x}}^*(\partial_{\frac{1}{x}}(B))= \sum\limits_{d-i \textcolor{black}{\text{ odd}}} \overline{ap_i\gamma_{k-i} \frac{\dd t_I}{t_I}} =\sum\limits_{j=0}^  {e-1} \overline{a p_{2j+1}\gamma_{k-{2j-1}} \frac{\dd t_I}{t_I}}. \end{align*}  }
  \noindent{Case 2 :} Let $d=2e+1$ be odd,  for some $e\geq 0$. Here also we get\begin{align*}
                s_{\frac{1}{x}}^*(\partial_{\frac{1}{x}}(\phi))= s_{\frac{1}{x}}^*(\partial_{\frac{1}{x}}(B))= \sum\limits_{d-i \textcolor{black}{\text{ odd}}} \overline{ap_i\gamma_{k-i} \frac{\dd t_I}{t_I}} =\sum\limits_{j=0}^  {e} \overline{a p_{2j}\gamma_{k-{2j}} \frac{\dd t_I}{t_I}}. \end{align*} 
            Thus, statement (i) is proved.\\
            
\noindent{(ii)}  For  any $a\in \F$, $k\geq 1$ and $I\in (T)_{m-2}$, we have  :\begin{align*}
        \phi= \overline{ax^k\frac{\dd t_I}{t_I}\wedge \frac{\dd x}{x}\wedge \frac{\dd p}{p}}&= \sum\limits_{i=0}^{d} \overline{\frac{ p_ix^{d-i}ax^k}{p}\frac{\dd t_I}{t_I}\wedge \frac{\dd x}{x}\wedge \frac{\dd (p_ix^{d-i})}{p_ix^{d-i}}}\quad (\textcolor{black}{\text{by (\ref{feq})}})\\ &=\sum\limits_{i=0}^{d} \overline{\frac{ap_ix^{d+k-i}}{p} \frac{\dd t_I}{t_I}\wedge \frac{\dd p_i}{p_i}\wedge \frac{\dd x}{x}}\\&=\sum\limits_{i=1}^{d} \overline{\frac{ap_ix^{d+k-i}}{p} \frac{\dd t_I}{t_I}\wedge \frac{\dd p_i}{p_i}\wedge \frac{\dd x}{x}} \quad \textcolor{black}{(\text{as  } p_0=1)}.
        \end{align*}
         {Repeating the same process done in statement (i),} we get  over  $\F(x)_{\frac{1}{x}}$ 
         \[ \phi= \sum\limits_{i=1}^{d} \overline{ (ap_i\gamma_{k-i}+R_i) \frac{\dd t_I}{t_I}\wedge \frac{\dd p_i}{p_i}\wedge \frac{\dd x}{x}},\]where each $R_i\in x\F[x].$ Hence, \[ s_{\frac{1}{x}}^*(\partial_{\frac{1}{x}}(\phi))= \sum\limits_{i=1}^{d} \overline{ ap_i\gamma_{k-i} \frac{\dd t_I}{t_I}\wedge \frac{\dd p_i}{p_i}}.\]
         Thus, statement (ii) follows.
\end{proof}
Now we prove  the  reciprocity law of $L_0+L_0\wedge \overline{\frac{\dd p}{p}}$, and for inseparable $p$ of $L_0\wedge\overline{\frac{\dd x}{x}\wedge \frac{\dd p}{p}}$ .

\begin{lemma}\label{L44} Let $ p=\sum_{i=0}^{d}p_ix^{d-i}$ be a monic irreducible polynomial of degree $d$ over $\F$. Then, we have:\\(i) $\sum\limits_{q}s_q^*(\partial_q(\phi))=0$ for all $\phi\in L_0+L_0\wedge \overline{\frac{\dd p}{p}}$, where $q$ varies over  $\frac{1}{x}$ and all monic irreducible polynomials of $\F[x]$.\\(ii) If $p$ is inseparable, then $\sum\limits_{q}s_q^*(\partial_ q(\phi))=0$ for all $\phi\in L_0\wedge \overline{\frac{\dd x}{x}\wedge \frac{\dd p}{p}}.$
\end{lemma}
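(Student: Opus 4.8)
The plan is to read the statement as a residue-theorem type identity on $\mathbb{P}^1_{\F}$. First I reduce to the generators of $L_0$ and of $L_0\wedge\overline{\frac{\dd p}{p}}$, and then observe that each such generator is a form on $\F(x)$ that ramifies at most at the two places $p$ and $\frac{1}{x}$. Indeed, a generator of $L_0$ has the shape $\overline{h\,\omega}$ with $h\in\F[x]$ and $\omega$ a wedge of logarithmic differentials $\frac{\dd c_i}{c_i}$ ($c_i\in\F^*$) together with at most one factor $\frac{\dd x}{x}$; since $h$ is a polynomial and the condition $h\in x\F[x]$ cancels the only pole that $\frac{\dd x}{x}$ could create at $x=0$, such a form is regular at every finite place. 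Wedging with $\overline{\frac{\dd p}{p}}$ introduces ramification at $p$ and, through the pole of $p$ at infinity, at $\frac{1}{x}$, but at no further finite place because $p(0)\neq 0$ keeps $\overline{\frac{\dd p}{p}}$ regular at $x=0$. Consequently $\sum_q s_q^*(\partial_q(\phi))$ collapses to the single term $s_{\frac{1}{x}}^*(\partial_{\frac{1}{x}}(\phi))$ when $\phi\in L_0$, and to $s_p^*(\partial_p(\phi))+s_{\frac{1}{x}}^*(\partial_{\frac{1}{x}}(\phi))$ when $\phi\in L_0\wedge\overline{\frac{\dd p}{p}}$. The guiding principle is then to prove these two terms \emph{equal}, so that their sum vanishes because $\mathrm{char}\,\F=2$.

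For $\phi\in L_0$ I would show $s_{\frac{1}{x}}^*(\partial_{\frac{1}{x}}(\phi))=0$ directly. If $\omega$ contains no factor $\frac{\dd x}{x}$, then the differential part of $\partial_{\frac{1}{x}}(\phi)$ carries no $\frac{\dd(1/x)}{1/x}=\frac{\dd x}{x}$ component, so $\theta_{\frac{1}{x}}$ returns $0$ and $s_{\frac{1}{x}}^*=t_{\frac{1}{x}}^*\circ\theta_{\frac{1}{x}}$ annihilates it. If $\omega$ does contain $\frac{\dd x}{x}$, then $h\in x\F[x]$ has no constant term, and since $s_{\frac{1}{x}}^*=\eta$ extracts precisely the constant part of the coefficient along $\frac{\dd x}{x}$ (Definition \ref{dt}(i) and \cite[Theorem 4.9]{ex}), the image is again $0$.

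The substantive case is $\phi\in L_0\wedge\overline{\frac{\dd p}{p}}$, which I would treat on the generators $\overline{a x^{k}\frac{\dd t_I}{t_I}\wedge\frac{\dd p}{p}}$ and those carrying an extra $\frac{\dd x}{x}$. On the infinite side, Lemma \ref{L43} already computes $s_{\frac{1}{x}}^*(\partial_{\frac{1}{x}}(\phi))$ explicitly in terms of the $p_j$ and $\gamma_i$ (part (i), respectively part (ii)). On the side of $p$, taking $p$ separable, $\theta_p(\phi)$ is the residue $\overline{a x^{k}\frac{\dd t_I}{t_I}}$, possibly wedged with $\frac{\dd x}{x}$, and $s_p^*(\partial_p(\phi))=t_p^*(\theta_p(\phi))$. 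Translating through Kato's isomorphism and Frobenius reciprocity reduces this to $f(\F)_m^{-1}\big(\langle\langle t_I\rangle\rangle\otimes t_p'([1,ax^{k}])\big)$, respectively $f(\F)_m^{-1}\big(\langle\langle t_I\rangle\rangle\otimes t_p'(\langle\langle x;ax^{k}]])\big)$, and Lemma \ref{L42}(i), respectively (ii), together with the congruence $\langle\langle t_I\rangle\rangle\otimes[c,d]\equiv\langle\langle t_I\rangle\rangle\otimes[1,cd]\pmod{I_q^{m+1}}$, yields exactly the expression produced by Lemma \ref{L43}. Thus $s_p^*(\partial_p(\phi))=s_{\frac{1}{x}}^*(\partial_{\frac{1}{x}}(\phi))$ and the two cancel. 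The argument splits according to the parity of $d=\deg p$, but the two formulas match column by column, the agreement being forced by the recurrence (\ref{g3}) that underlies both lemmas.

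The proof of (ii) follows the same template: for $\phi\in L_0\wedge\overline{\frac{\dd x}{x}\wedge\frac{\dd p}{p}}$ with $p$ inseparable, only $p$ and $\frac{1}{x}$ ramify, Lemma \ref{L43}(ii) supplies the contribution at $\frac{1}{x}$, and the contribution at $p$ is read off from the explicit inseparable transfer of Definition \ref{dt}(ii) with Lemma \ref{L42}(ii),(iii). I expect this inseparable case to be the main obstacle: there $s_p^*$ is not merely $t_p^*\circ\theta_p$ but the intricate formula of Definition \ref{dt}(ii), isolating the components with $I_x=1$ and retaining only their constant parts $(\,\cdot\,)_c$ and the differentials $\frac{\dd p_c}{p_c}$. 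Matching this output against the infinite side therefore requires careful bookkeeping of the $\gamma_i$, of the parity of $d$, and of the vanishing $p_j=0$ for odd $j$; once the two contributions are identified, the reciprocity again follows from $\mathrm{char}\,\F=2$.
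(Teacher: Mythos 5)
Your reduction to generators and your guiding principle --- that the sum collapses to two local contributions which are then shown to be \emph{equal}, hence cancel in characteristic $2$ --- is exactly the paper's strategy for part (i), and your treatment of $L_0$ and of $L_0\wedge\overline{\frac{\dd p}{p}}$ (matching $s_p^*\circ\partial_p$ against $s_{1/x}^*\circ\partial_{1/x}$ via Lemma \ref{L42}(i),(ii) on one side and Lemma \ref{L43} on the other) is the paper's argument. Two details you gloss over but which are handled without difficulty: the case $\deg p=1$ for the generators carrying $\frac{\dd x}{x}$ needs a separate small computation, and for $p$ inseparable one must check that the extra terms in Definition \ref{dt}(ii) vanish on these generators (they do, since $t^I\in\F$).

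In part (ii), however, you identify the wrong pair of ramified places. After discarding the generators with $i\geq 1$ (which land in $L_0\wedge\overline{\frac{\dd p}{p}}$ and are covered by part (i)), the essential case is $\phi=\overline{a\,\frac{\dd t_I}{t_I}\wedge\frac{\dd x}{x}\wedge\frac{\dd p}{p}}$ with $a\in\F$ constant. Here the coefficient of the $\frac{\dd x}{x}$ factor is a unit at $x=0$, so $\phi$ ramifies at the \emph{finite} place $x$; and writing $\phi=\sum_{i=1}^{d}\overline{\frac{ap_ix^{d-i}}{p}\frac{\dd t_I}{t_I}\wedge\frac{\dd p_i}{p_i}\wedge\frac{\dd x}{x}}$ shows $v_{1/x}\bigl(\frac{ap_ix^{d-i}}{p}\bigr)\geq 1$, so $\partial_{1/x}(\phi)=0$. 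Your plan of having ``Lemma \ref{L43}(ii) supply the contribution at $\frac{1}{x}$'' cannot work: that lemma requires $k\geq 1$, and the contribution at $\frac{1}{x}$ is zero anyway, while $s_p^*(\partial_p(\phi))=\overline{a\frac{\dd t_I}{t_I}\wedge\frac{\dd p_d}{p_d}}$ is generally nonzero, so you would be left trying to cancel a nonzero term against $0$. The identity actually needed is $s_x^*(\partial_x(\phi))=s_p^*(\partial_p(\phi))$, with the left side computed from $\frac{ap_d}{p}\equiv a$ at $x=0$ and the right side from Lemma \ref{L42}(iii), i.e.\ $t_p'(\langle\langle x;a]])=\langle\langle p_d;a]]$ --- which you do cite, but deploy against the wrong place. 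This is a genuine gap in the plan for (ii), though a localized one: once the balancing place is corrected from $\frac{1}{x}$ to $x$, the rest of your template goes through.
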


\begin{proof}
    (i)  Recall that  $L_0$ is  generated by these  two types of forms  $ {\overline{ ax^i\frac{\dd t_I}{t_I}}}$ and ${\overline{bx^j\frac{\dd t_J}{t_J}\wedge \frac{\dd x}{x}}} $, where  $I\in (T)_m,J\in (T)_{m-1}$, $a,b\in \F$ and $ i\geq 0, j\geq 1$.  Let  $\phi={\overline{ ax^i\frac{\dd t_I}{t_I}}} $,  where $a,i,I$ are as stated above. Then by \cite[Lemma 3.3]{ex},  $\partial_q(\phi)=0$ for all $q$ monic irreducible. 
         For  ${q=\frac{1}{x}},$  since $I\in (T)_m$, it follows from the  definition of $ s_{\frac{1}{x}}^*$ that  $$ {s_{\frac{1}{x}}^*(\partial_{\frac{1}{x}}(\phi))=0}.$$ 
        {Now suppose $\phi={\overline{bx^j\frac{\dd t_J}{t_J}\wedge \frac{\dd x}{x}}}$,} where $b\in \F,j\geq 1$ and $J\in (T)_{m-1}$. Again by \cite[Lemma 3.3]{ex},  $\partial_q(\phi)=0$, where  $q\neq x$ is a monic irreducible polynomial. 
Since $j\geq 1$,  Remark \ref{R21} implies that   $\partial_x(\phi)=0$ and  by definition of the transfer maps,  we have ${s_{\frac{1}{x}}^*(\partial_{\frac{1}{x}}(\phi))=0}.$ Therefore, we get \begin{equation}\label{Leq}
     \sum\limits_{q}s_q^*( \partial_q(\phi))=0, \quad \forall\; \phi\in L_0.
 \end{equation}
Note that ${L_0\wedge \overline{\frac{\dd p}{p}}} $ is generated by the two types of elements  $ {\overline{ ax^i\frac{\dd t_I}{t_I}\wedge \frac{\dd p}{p}}}$ and ${\overline{bx^j\frac{\dd t_J}{t_J}\wedge \frac{\dd x}{x}\wedge \frac{\dd p}{p}}} $  where $a,b\in \F$, $ i\geq 0, j\geq 1$ and $I\in (T)_{m-1},J\in (T)_{m-2}$. 
If $ {\phi=\overline{ ax^i\frac{\dd t_I}{t_I}\wedge \frac{\dd p}{p}}}$, then  \cite[Lemma 3.3]{ex} implies that   $\partial_q(\phi)=0$ for all  $q\neq p$ monic irreducible.

Now suppose $\phi={\overline{bx^j\frac{\dd t_J}{t_J}\wedge \frac{\dd x}{x}\wedge \frac{\dd p}{p}}}$, where $b\in \F$, $j\geq 1$ and $J\in (T)_{m-2}$. {Hence, Remark \ref{R21} implies that   $\partial_x(\phi)=0$.} Moreover, by \cite[Lemma 3.3]{ex}, $\partial_q(\phi)=0$ for all  monic irreducible $q$ except $p$ and $x$.

Thus, till now we proved that  for any  ${\phi\in L_0\wedge \overline{\frac{\dd p}{p}}}$, we have  $ {s_q^*(\partial_q(\phi))=0}$ for all $q$ except $p$ and $\frac{1}{x}$. Hence, it suffices to  prove that $$s_p^*(\partial_p(\phi))=s_{\frac{1}{x}}^*(\partial_{\frac{1}{x}}(\phi)$$for any generator $\phi$ of ${L_0\wedge \overline{\frac{\dd p}{p}} }$. Let us consider $ \phi={\overline{ ax^i\frac{\dd t_I}{t_I}\wedge \frac{\dd p}{p}}} $, where $a\in \F$, $i\geq 0$ and  $I\in (T)_{m-1}$, such that $ \tilde{I}=\{l_1,\ldots,l_{m-1}\}$ with $l_1<l_2<\ldots<l_{m-1}$. Therefore,  we get \begin{align*}
    s_p^*(\partial_p(\phi))&= s_p^*\left( \overline{ ax^i\frac{\dd t_I}{t_I}\wedge \frac{\dd p}{p}}\right)=t_p^*\left( \overline{ ax^i\frac{\dd t_I}{t_I}}\right)\\&= {f(\F)}^{-1}_m\left(t_p''\left(\overline{ \langle \langle t_{l_1},\ldots,t_{l_{m-1}};ax^i]]}\right)\right) \quad \textcolor{black}{(\text{by  the definition of } t_p^*)}\\&=  {f(\F)}^{-1}_m\left( \overline{ t_p'\left(\langle \langle t_{l_1},\ldots,t_{l_{m-1}};ax^i]]\right)}\right).\end{align*} \textcolor{black}{Now using \cite[Proposition 20.2]{El}, we have  \begin{align*} s_p^*(\partial_p(\phi))=f(\F)_m^{-1} \left(\overline{\langle \langle t_{l_1},\ldots,t_{l_{m-1}}\rangle\rangle\otimes t_p'([1,ax^i])}\right).
        \end{align*}} Recall that $p=\sum\limits_{i=0}^{d}p_ix^{d-i}$ with $p_0=1$. Putting the value of $t_p'([1,ax^i])$ from   Lemma \ref{L42}(i), we get  
        \begin{align*}
            s_p^*(\partial_p(\phi))&=\begin{cases}
                 f(\F)^{-1}_m\left(\sum\limits_{j=0}^{e-1} \overline{\langle\langle t_{l_1},\ldots, t_{l_{m-1}}; ap_{2j+1} \gamma_{i-2j-1}]]}\right) &  \textcolor{black}{\text{if}} \; d=2e,\\
                  f(\F)^{-1}_m\left(\sum\limits_{j=0}^{e} \overline{\langle\langle t_{l_1},\ldots, t_{l_{m-1}}; ap_{2j} \gamma_{i-2j}]]}\right) &  \textcolor{black}{\text{if}} \; d=2e+1.
            \end{cases}\\&= \begin{cases}
                {\sum\limits_{j=0}^{e-1} \overline{ ap_{2j+1}\gamma_{i-2j-1} \frac{\dd t_I}{t_I}}} &  \textcolor{black}{\text{if}}\; d=2e,\\
    {\sum\limits_{j=0}^{e}\overline{ap_{2j} \gamma_{i-2j} \frac{\dd t_I}{t_I}}} & \textcolor{black}{\text{if}}\; d=2e+1.
            \end{cases} \\&= s_{\frac{1}{x}}^*(\partial_{\frac{1}{x}}(\phi))\quad \textcolor{black}{(\text{by Lemma  \ref{L43}(i)})}.\end{align*}
          Let $\phi$ be a   generator of $ {L_0\wedge \overline{\frac{\dd p}{p}}}$ of second type, i.e,  $ \phi={\overline{ bx^j\frac{\dd t_J}{t_J}\wedge\frac{\dd x}{x}\wedge \frac{\dd p}{p}}},$ where $b\in \F$, $j\geq 1$ and $J\in (T)_{m-2}$ such that  $\tilde{J}=\{l_1,\ldots,l_{m-2}\}$ with $ l_1<l_2<\ldots<l_{m-2}.$ 
        Here, we have two cases depending on $\deg p=d> 1$ or $d=1$.\\

            \noindent{(a)} Let $\deg p=d> 1$. Then, {$p\neq x$, and} we obtain:\begin{align*}
                 s_p^*(\partial_p(\phi))&= s_p^*\left( \overline{ bx^j\frac{\dd t_J}{t_J}\wedge\frac{\dd x}{x}\wedge  \frac{\dd p}{p}}\right)\\&=t_p^*\left( \overline{ bx^j\frac{\dd t_J}{t_J}\wedge \frac{\dd x}{x}}\right)\\&= {f(\F)}^{-1}_m\left(t_p''\left(\overline{ \langle \langle t_{l_1},t_{l_2},\ldots,t_{l_{m-2}},x;bx^j]]}\right)\right)\\&=  {f(\F)}^{-1}_m\left( \overline{ t_p'\left(\langle \langle t_{l_1},t_{l_2},\ldots,t_{l_{m-2}},x;bx^j]]\right)}\right)\\&=  {f(\F)}^{-1}_m\left( \overline{ \left(\langle \langle t_{l_1},t_{l_2},\ldots,t_{l_{m-2}}\rangle\rangle\right)\otimes t_p'(\langle\langle x;bx^j]])}\right)\\&= f(\F)^{-1}_m\left( \sum\limits_{i=1}^{d}\overline{\langle\langle t_{l_1},t_{l_2}, \ldots, t_{l_{m-2}}, p_i; bp_i\gamma_{j-i}]]}\right)\quad \textcolor{black}{(\text{by  Lemma \ref{L42}(ii)})}\\&= \sum\limits_{i=1}^{d} \overline{ bp_i\gamma_{j-i} \frac{\dd t_J}{t_J}\wedge \frac{\dd p_i}{p_i}} = s^*_{\frac{1}{x}}(\partial_{\frac{1}{x}}(\phi))\quad \textcolor{black}{(\text{by Lemma \ref{L43}(ii)})}.
            \end{align*} 
 \noindent{(b)} Let $\deg p=1$. Then $p=x+c$ for some $c\in \F$, and $p_0=1$, $p_1=c$. 
              Now, we have \begin{align*}
                  \phi= \overline{bx^j\frac{\dd t_J}{t_J}\wedge \frac{\dd x}{x}\wedge \frac{\dd p}{p}}&=\overline{bx^j\frac{\dd t_J}{t_J}\wedge \frac{\dd {(p+c)}}{(p+c)}\wedge \frac{\dd p}{p}}\\&= \overline{ \frac{pbx^j}{x}\frac{\dd t_J}{t_J}\wedge \frac{\dd p}{p}\wedge \frac{\dd p}{p}}+\overline{\frac{cbx^j}{x} \frac{\dd t_J}{t_J}\wedge \frac{\dd c}{c}\wedge\frac{\dd p}{p}} \quad \textcolor{black}{(\text{by  (\ref{feq}))}}\\&= \overline{{cbx^{j-1}} \frac{\dd t_J}{t_J}\wedge \frac{\dd c}{c}\wedge\frac{\dd p}{p}} \\&= \overline{{cb(p+c)^{j-1}} \frac{\dd t_J}{t_J}\wedge \frac{\dd c}{c}\wedge\frac{\dd p}{p}}.
              \end{align*}Using Remark \ref{R21} and   definition of the transfer map $s_p^*$, we have  \begin{align*}
                  s_p^*(\partial_p(\phi))= \overline{ {bc^j} \frac{\dd t_J}{t_J}\wedge \frac{\dd c}{c}}&= \overline{ bp_1\gamma_{j-1} \frac{\dd t_J}{t_J}\wedge \frac{\dd p_1}{p_1}}\quad \textcolor{black}{(\text{since } p_1=c\text{ and } \gamma_{j-1}= c^{j-1} )}\\&=s_{\frac{1}{x}}^*(\partial_{\frac{1}{x}}(\phi)) \quad \textcolor{black}{(\text{by Lemma \ref{L43}(ii)})}.
              \end{align*}Finally, for  ${\phi= \overline{bx^j\frac{\dd t_J}{t_J}\wedge \frac{\dd x}{x}\wedge \frac{\dd p}{p}}}$  with $ b\in \F,j\geq 1$ and $J\in (T)_{m-2}$, we have  $ \sum\limits_{q}s_q^*(\partial_q(\phi))=0.$ Taking all together,  we get that $ \sum\limits_{q}s_q^*(\partial_q(\phi))=0$ for all $\phi\in L_0+L_0\wedge \overline{\frac{\dd p}{p}}$ and $p$ monic irreducible.\\

\noindent{(ii)}  Let $p=\sum\limits_{i=0}^{d} p_ix^{d-i}$ be inseparable, where   $p_i\in \F$ and $p_0=1$. {Let $\phi={ \overline{ ax^i\frac{\dd t_I}{t_I}\wedge \frac{\dd x}{x}\wedge \frac{\dd p}{p}}}$, where $a\in \F$, $i\geq 0$ and  $I\in (T)_{m-2}$, a generator of the group ${L_0\wedge \overline{ \frac{\dd x}{x}\wedge \frac{\dd p}{p}}}$.}
Now if $i\geq 1$, then $ {\overline{ax^i\frac{\dd t_I}{t_I}\wedge \frac{\dd x}{x}}}\in L_0$ and it implies that ${\phi\in L_0\wedge \overline{  \frac{\dd p}{p}}}$. Then, by statement  (i), we have $\sum\limits_{q}s^*_q(\partial_q( \phi))=0$. Therefore, {it remains to prove that $\sum\limits_{q}s^*_q(\partial_q( \phi))=0$ for $\phi={ \overline{ a\frac{\dd t_I}{t_I}\wedge \frac{\dd x}{x}\wedge \frac{\dd p}{p}}}$, where   $a\in \F$.} By \cite[Lemma 3.3]{ex},  we have  $ \partial_q\left({ \overline{ a\frac{\dd t_I}{t_I}\wedge \frac{\dd x}{x}\wedge \frac{\dd p}{p}}}\right)=0$  for all $q$ monic irreducible except $p$, $x$ and $\frac{1}{x}.$
 Now we develop the expression of $\phi$:
\begin{align*}
      \phi&= \overline{ a\frac{\dd t_I}{t_I}\wedge \frac{\dd x}{x}\wedge \frac{\dd p}{p}} \\&= \sum_{i=0}^{d}\overline{\frac{ap_ix^{d-i}}{p}\frac{\dd t_I}{t_I}\wedge \frac{\dd x}{x}\wedge \frac{\dd (p_ix^{d-i})}{p_ix^{d-i}} }\quad (\textcolor{black}{\text{by  (\ref{feq})}})\\&= \sum_{i=0}^{d}\overline{\frac{ap_ix^{d-i}}{p}\frac{\dd t_I}{t_I}\wedge \frac{\dd x}{x}\wedge \frac{\dd p_i}{p_i}} \\&= \sum_{i=1}^{d}\overline{\frac{ap_ix^{d-i}}{p}\frac{\dd t_I}{t_I}\wedge \frac{\dd p_i}{p_i}\wedge \frac{\dd x}{x}} \quad \textcolor{black}{(\text{since }   p_0=1)}.
  \end{align*}
Since  $ \deg(ap_ix^{d-i})<\deg p$, so ${v_{\frac{1}{x}}\left( \frac{ap_ix^{d-i}}{p}\right)}\geq 1 $ for all $1\leq i\leq d.$
   Therefore, $ \partial_{\frac{1}{x}}(\phi)=0.$
So we need to prove that $$s_x^*(\partial_x(\phi))= s_p^*(\partial_p(\phi)).$$ If $1\leq i<d$, then $ {v_{{x}}\left( \frac{ap_ix^{d-i}}{p}\right)}\geq 1 $.   \textcolor{black}{ Since $p_d$ is the constant term of $p$,  it follows that  ${\frac{ap_d}{p}}= a+u\in \F((x))$, where $v_x(u)\geq 1$.} Therefore,  we have  \begin{align*} \partial_x(\phi)= \partial_x\left( \overline{\frac{ap_d}{p} \frac{\dd t_I}{t_I} \wedge \frac{\dd p_d}{p_d}\wedge \frac{\dd x}{x}}\right)= \overline{ a \frac{\dd t_I}{t_I}\wedge \frac{\dd p_d}{p_d}\wedge \frac{\dd x}{x}}.\end{align*}  Consequently, by definition of the transfer map $s_x^*$, we have  \begin{equation}\label{xeq} {s_x^*(\partial_x(\phi))=\overline{ a \frac{\dd t_I}{t_I}\wedge \frac{\dd p_d}{p_d}}}.\end{equation}
      Now we compute $s_p^*(\partial_p(\phi))$. Let $\tilde{I}=\{i_1,i_2,\ldots,i_{m-2}\}$, with $i_1<i_2<\ldots<i_{m-2}$. Then, we have   

\begin{align*} s_p^*(\partial_p( \phi))&= s_p^*\left(\partial_p\left( \overline{ a\frac{\dd t_I}{t_I}\wedge \frac{\dd x}{x}\wedge \frac{\dd p}{p}}\right)\right)\\&=t_p^*\left(\overline{ a \frac{\dd t_I}{t_I}\wedge \frac{\dd x}{x}}\right) \quad \textcolor{black}{(\text{since } \deg p>1)} \\&= f(\F)^{-1}_m\left( t_p''\left( \overline{ \langle\langle t_{i_1},\ldots, t_{i_{m-2}},x;a]}\right)\right)\\&= f(\F)^{-1}_m\left( \overline{ t_p'\left( \langle\langle t_{i_1},\ldots,t_{i_{m-2}},x;a]]\right)}\right)\\&= f(\F)^{-1}_m\left( \overline{  \langle\langle t_{i_1},\ldots, t_{i_{m-2}}\rangle\rangle\otimes t_p'\left( \langle\langle x;a]]\right)}\right)\\&= f(\F)^{-1}_m\left( \overline{\langle\langle t_{i_1},\ldots, t_{i_{m-2}}\rangle\rangle\otimes (\langle\langle p_d;a]])}\right)\quad \textcolor{black}{(\text{by Lemma \ref{L42}(iii)})}\\&=\overline{a\frac{\dd t_I}{t_I}\wedge \frac{\dd p_d}{p_d}}=s_x^*(\partial_x(\phi))\quad (\textcolor{black}{\text{by (\ref{xeq})}}).\end{align*}
  Thus,  ${\sum\limits_{q}s_q^*(\partial_q(\phi))=0}$ for all $\phi\in {L_0\wedge \overline{\frac{\dd x}{x}\wedge \frac{\dd p}{p}}}$, where $p$ is  an   inseparable monic irreducible polynomial.
    \end{proof}

\section{Milnor-Scharlau's exact sequence}\label{S5}
Our aim in this section is to extend the reciprocity law for   $ H_2^{m+1}(\F(x))$, where $\F$ is a field of characteristic $2$ and prove the main result of the paper.

\begin{thm}\label{T51} For any $\phi\in H_2^{m+1}(\F(x))$, we have  $\sum\limits_{q} s_q^*(\partial_q(\phi))=0$, where $q$ varies over  $\frac{1}{x}$ and all monic irreducible polynomials over $\F$.
 \end{thm}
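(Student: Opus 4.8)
The plan is to exploit the exhaustive filtration $H_2^{m+1}(\F(x))=\bigcup_{d\ge 0}L_d$ from \cite{ex} and to prove by induction on $d$ that $\sum_q s_q^*(\partial_q(\phi))=0$ for every $\phi\in L_d$. The base case $\phi\in L_0$ is precisely equation~(\ref{Leq}) established inside the proof of Lemma~\ref{L44}. For the inductive step I would take a generator of $L_d$ and, using partial fractions together with the identity $\frac{\dd f}{f}=\sum_{p\mid f}\frac{\dd p}{p}$ (which in characteristic $2$ only retains the parity of each multiplicity), transfer its degree-$d$ content onto the finitely many monic irreducible polynomials $p$ of degree $d$. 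Modulo $L_{d-1}$ the generator then becomes a finite sum of \emph{$p$-primary} terms, each lying in $S_p+S_p\wedge\overline{\frac{\dd p}{p}}$ (resp.\ in $\tilde S_p+\tilde S_p\wedge\overline{\frac{\dd p}{p}}$ when $p$ is inseparable). Since the $L_{d-1}$-part is annihilated by the induction hypothesis, it suffices to annihilate each $p$-primary term.

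Fix such a $p$. By Lemma~\ref{L32}(i)--(iv) the $p$-primary term lies in $U_p+L_0+L_0\wedge\overline{\frac{\dd p}{p}}$, with the two additional summands $L_0\wedge\overline{\frac{\dd x}{x}}$ and $L_0\wedge\overline{\frac{\dd x}{x}\wedge\frac{\dd p}{p}}$ when $p$ is inseparable. The parts lying in $L_0$, $L_0\wedge\overline{\frac{\dd p}{p}}$ and $L_0\wedge\overline{\frac{\dd x}{x}\wedge\frac{\dd p}{p}}$ are killed by $\sum_q s_q^*\circ\partial_q$ thanks to~(\ref{Leq}) and Lemma~\ref{L44}(i),(ii). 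Hence the whole statement reduces to the single claim that $\sum_q s_q^*(\partial_q(u))=0$ for every $u\in U_p$.

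For the $U_p$-term I would argue one place at a time. Every generator of $U_p$ has a coefficient of the form $\overline{t^Js^2}/p^{k}$ with $\deg\overline{t^Js^2}<\deg p\le k\deg p$; it is therefore regular at each monic irreducible $q\ne p$, so $\partial_q(u)=0$ by \cite[Lemma 3.3]{ex}, and its valuation at infinity is strictly positive, so $\partial_{\frac1x}(u)=0$ by Remark~\ref{R21}. At the place $p$ one has $\partial_p(u)\in\overline{U_p}$ by the description of $\overline{U_p}$ in Lemma~\ref{L32}(vi), and $s_p^*$ annihilates $\overline{U_p}$ because $s_p^*=t_p^*\circ\theta_p$ with $\theta_p$ defined only modulo $\overline{U_p}$. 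This disposes of the separable case and of $p=\frac1x$ at once.

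The main obstacle is twofold. First, making the reduction of the first paragraph rigorous is exactly where the structure theory of \cite{ex}---the graded pieces of the filtration $(L_d)$ and the way degree-$d$ denominators are distributed over the primes of degree $d$---has to be invoked, and the place at infinity must be fed into the bookkeeping consistently. Second, when $p$ is inseparable the indeterminate $x$ belongs to the chosen $2$-basis, so $U_p$ acquires a genuine residue at the place $x$; the required cancellation $s_p^*(\partial_p(u))+s_x^*(\partial_x(u))=0$ is no longer formal and must be read off from the explicit inseparable definition of $s_p^*$ in Definition~\ref{dt}(ii) together with the auxiliary subgroup $U_p^0$. Establishing this last compatibility---matching the residue transferred from $p$ against the one at $x$---is the technical heart of the argument; granting it, summing over the finitely many primes dividing a given $\phi$ yields $\sum_q s_q^*(\partial_q(\phi))=0$ and completes the induction.
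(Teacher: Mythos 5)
Your overall architecture matches the paper's: decompose $H_2^{m+1}(\F(x))$ into $p$-primary pieces (the paper simply quotes $H_2^{m+1}(\F(x))=\sum_p\bigl(S_p+S_p\wedge\overline{\frac{\dd x}{x}}\bigr)$ from \cite{ex} rather than running your induction on the filtration $(L_d)$, but that is a cosmetic difference), absorb each piece into $U_p$ plus $L_0$-type summands via Lemma \ref{L32}, kill the $L_0$-type summands by Lemma \ref{L44}, and then treat the generators of $U_p$ one place at a time. Your handling of the separable case and of $p=\frac{1}{x}$ is correct: the residues vanish away from $p$, the residue at infinity vanishes because $\deg\overline{t^Js^2}<\deg p$, and $\partial_p(u)\in\overline{U_p}$ forces $\theta_p(\partial_p(u))=0$, hence $s_p^*(\partial_p(u))=0$.

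However, the proposal stops exactly where the proof becomes nontrivial. For inseparable $p$ you write that the cancellation $s_p^*(\partial_p(u))=s_x^*(\partial_x(u))$ for generators of $U_p\setminus U_p^0$ ``must be read off from the explicit inseparable definition of $s_p^*$'' and then proceed ``granting it.'' That identity is not formal and is the actual content of the theorem for inseparable $p$: one must take $\phi=\overline{\frac{\overline{t^Js^2_{r,I,J}}}{p^r}\frac{\dd t_I}{t_I}\wedge\frac{\dd x}{x}}$ (and the variant with an extra $\wedge\frac{\dd p}{p}$), compute $\partial_x(\phi)$ by extracting constant terms via $\frac{\overline{t^Js^2_{r,I,J}}}{p^r}=\frac{(\overline{t^Js^2_{r,I,J}})_c}{p_c^r}+u$ with $v_x(u)\geq 1$ (for the $\wedge\frac{\dd p}{p}$ variant one must first expand $\frac{\dd p}{p}$ via the relation $\frac{\dd(u+v)}{u+v}=\frac{u}{u+v}\frac{\dd u}{u}+\frac{v}{u+v}\frac{\dd v}{v}$ and use $p_c=p_d$), and check that this coincides term by term with the explicit sums in Definition \ref{dt}(ii). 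A related unverified point: for inseparable $p$ the map $s_p^*$ is \emph{not} just $t_p^*\circ\theta_p$, so even for the generators of $U_p^0$ (those with $I_x=0$) you need $\partial_p(\phi)\in\overline{U_p^0}$, not merely $\partial_p(\phi)\in\overline{U_p}$, to conclude that the extra sums in Definition \ref{dt}(ii) contribute nothing. Without these computations the reciprocity law is established only for separable $p$ and $p=\frac{1}{x}$, so the proof is incomplete.
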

 \begin{proof}
     By \cite[Lemma 3.5(2)]{ex}, we have $H_2^{m+1}(\F(x))=\sum\limits_{p}\left(S_p+S_p\wedge \overline{\frac{\dd x}{x}}\right),$ where  $p$ varies over all  monic irreducible polynomials. 
\medskip

-- If $p$ is separable, then   Lemma \ref{L32}(ii) implies that $$S_p+S_p\wedge \overline{\frac{\dd x}{x}}\subseteq U_p+L_0+L_0\wedge \overline{\frac{\dd x}{x}}+L_0\wedge \overline{\frac{\dd p}{p}}.$$By Lemma \ref{L44}(i), we have      $\sum\limits_q s_q^*(\partial_q(\phi))=0$ for all $\phi\in L_0+L_0\wedge \overline{\frac{\dd x}{x}}+L_0\wedge \overline{\frac{\dd p}{p}}$. 
\medskip

-- If $p$ is  inseparable,  then by  Lemma \ref{L32}(iv)  we have $$S_p+S_p\wedge \overline{\frac{\dd x}{x}}\subseteq  U_p+L_0+L_0\wedge \overline{\frac{\dd x}{x}}+ L_0\wedge \overline{ \frac{\dd p}{ p}}+L_0\wedge \overline{\frac{\dd x}{x}\wedge \frac{\dd p}{p}}. $$Moreover, for all ${\phi\in L_0+L_0\wedge \overline{\frac{\dd x}{x}}+ L_0\wedge \overline{ \frac{\dd p}{ p}}+L_0\wedge \overline{\frac{\dd x}{x}\wedge \frac{\dd p}{p}}}$, we have by Lemma \ref{L44} ${\sum\limits_{q}s_q^*(\partial_q(\phi))=0}$.

     By Definition (\ref{Up}), $U_p=\bigoplus\limits_{r\geq 1}S'_{p,r}$ for all $p$ monic  irreducible. 
Therefore, it is sufficient to prove that  ${\sum\limits_{q} s_q^*(\partial_q(\phi))=0}$, where $\phi$ is a generator of   $ S'_{p,r}$ for any  monic irreducible $p$ and  $r\geq 1.$ We  prove this in four cases.\\

    \noindent{(i)} Let ${\phi=\overline{\frac{\overline{t^J s^2_{r,I,J}}}{p^r}\frac{\dd t_I}{t_I}}}$, where  $ J\in \overline{T}_p,r\geq 1 $,  $s_{r,I,J}\in \F[x]_{< \deg p}$ and  $I\in ({T}_p)_{m}$ such that $t^I\in \F$. Moreover,  $J+I> I$ when $r$ is even.
 By \cite[Lemma 3.3]{ex}, we get  $\partial_q(\phi)=0$ for all $q$ 
 except $p$ and $\frac{1}{x}$.
Since $\deg \left(\overline{t^J s^2_{r,I,J}}\right)<\deg p$, it follows that  $v_{\frac{1}{x}}\left(\frac{\overline{t^J s^2_{r,I,J}}}{p^r}\right)>0$. Hence ${\partial_{\frac{1}{x}}(\phi)=0}.$
 Now  the  definition of $s_p^*$ and Theorem \ref{T23} implies that   $ s_p^*(\partial_p(\phi))=0.$  Because $\partial_p(\phi)\in \overline{U_p}$ for $p$ is separable, and $\partial_p(\phi)\in \overline{U_p^0}$ for $p$ inseparable. Thus, for this particular type of generators $\phi$ of $S'_{p,r}$, we have $\sum_{q} s_q^*(\partial_q(\phi))=0.$

\noindent{(ii)} Let ${\phi=\overline{\frac{\overline{t^J s^2_{r,I,J}}}{p^r}\frac{\dd t_I}{t_I}\wedge \frac{\dd p}{p}}}, $ where  $r$ is even, $ s_{r,I,J}\in \F[x]_{< \deg p}$ and $I\in ({T}_p)_{m-1}$ such that $t^I\in \F$. Moreover, $J\in \overline{T}_p\subset T_p$ such that $J+I> I$.   Now  \cite[Lemma 3.3]{ex} implies that   $\partial_q(\phi)=0$ for all ${q\neq p,\frac{1}{x}}.$ By the same reasons as in (i), we also have $ \partial_{\frac{1}{x}}(\phi)=0$ and $s_p^*(\partial_p(\phi))=0$.

The elements {considered} in (i) and (ii) generate $U_p$ for $p$ separable and $U_p^0$ for $p$ inseparable.  It remains  to check for the generators of $U_p\setminus U_p^0$, when $p$ is  inseparable.  Therefore, for  the next two  types  of generators, let $p$ be inseparable.
\vskip1mm

\noindent{(iii)} Let $ {\phi=\overline{\frac{\overline{t^J s^2_{r,I,J}}}{p^r}\frac{\dd t_I}{t_I}\wedge \frac{\dd x}{x}}},$ where $J\in \overline{T}_p, r\geq 1$, $s_{r,I,J}\in \F[x]_{< \deg p}$ and  $I\in ({T}_p)_{m-1}$ such that $t^I\in \F$.
      Moreover for $r$ even, $J+I'> I'$, where $t^{I'}=t^I x$. Now \cite[Lemma 3.3]{ex} implies that   $\partial_q(\phi)=0$ for all $q$ except $ p,x$ and $\frac{1}{x}$.  Since $\deg\left(\overline{t^Js^2_{r,I,J}}\right)<\deg p$,   Remark \ref{R21} implies that $\partial_{\frac{1}{x}}(\phi)=0$. Therefore, it suffices  to  show that $ s_p^*(\partial_p(\phi))=s_x^*(\partial_x(\phi))$. Now    we write $\frac{\overline{t^J s^2_{r,I,J}}}{p^r}=\frac{\left(\overline{t^J s^2_{r,I,J}}\right)_c}{p_c^r}+ u\in \F((x))$, where $v_x(u)\geq 1$
 and  $f_c$ denotes the constant term
  of $f$, for any polynomial $f\in \F[x]$.
Therefore, we have \begin{align*} s_x^*\left(\partial_x(\phi)\right)&= s_x^*\left( \overline{ \frac{\left(\overline{t^J s^2_{r,I,J}}\right)_c}{p_c^r}\frac{\dd t_I}{t_I}\wedge \frac{\dd x}{x}}\right)= \overline{ \frac{\left(\overline{t^J s^2_{r,I,J}}\right)_c}{p_c^r}\frac{\dd t_I}{t_I}}. \end{align*} Using the decomposition of Theorem \ref{T23}, ${\partial_p(\phi)=\overline{\frac{\overline{t^J s^2_{r,I,J}}}{p^r}\frac{\dd t_I}{t_I}\wedge \frac{\dd x}{x}}}$. Since $p$ is inseparable, it folllows that   $$ {s_p^*( \partial_p(\phi))=\overline{ \frac{\left(\overline{t^J s^2_{r,I,J}}\right)_c}{p_c^r}\frac{\dd t_I}{t_I}}} = s_x^*(\partial_x(\phi)).$$\\

 \noindent{(iv)} Let $ {\phi=\overline{\frac{\overline{t^Js^2_{r,I,J}}}{p^r}\frac{\dd t_I}{t_I}\wedge \frac{\dd x}{x}\wedge \frac{\dd p}{p}}},$ where  $r$ is even, $ s_{r,I,J}\in \F[x]_{< \deg p}$ and $I\in ({T}_p)_{m-2}$ such that $t^I\in \F$. Moreover, $J\in \overline{T}_p$ such that $J+I'> I'$, where $t^{I'}=t^Ix$. As in  part (iii), here also $ \partial_q(\phi)=0$ for all $q$ except $ p$ and $x.$ Hence we need to prove that $ s_p^*(\partial_p(\phi))=s_x^*(\partial_x(\phi))$. Now by  the definition of $s_p^*$, we have $${s_p^*(\partial_p(\phi))=\overline{\frac{\left(\overline{t^J s^2_{r,I,J}}\right)_c}{p_c^r}\frac{\dd t_I}{t_I}\wedge \frac{\dd p_c}{p_c}}}.$$

Let $p=\sum\limits_{i=0}^{d}p_ix^{d-i}$ with $p_0=1$. Let $h$ denote $\overline{t^J s^2_{r,I,J}}$, which is a polynomial of degree $<d$.   Using (\ref{feq}) we write   \begin{align*}
    \phi&= \sum_{i=0}^{d} \overline{ \frac{hp_ix^{d-i}}{p^{r+1}}\frac{\dd t_I}{t_I}\wedge \frac{\dd x}{x}\wedge \frac{\dd p_i}{p_i}}\\&= \sum_{i=1}^{d} \overline{ \frac{hp_ix^{d-i}}{p^{r+1}}\frac{\dd t_I}{t_I}\wedge \frac{\dd p_i}{p_i}\wedge \frac{\dd x}{x}} \quad\textcolor{black}{(\text{as }  p_0=1)}\end{align*}
Hence, we get
\begin{align*}
\partial_x(\phi)&= \overline{ \frac{h_c p_d}{p^{r+1}_c}\frac{\dd t_I}{t_I}\wedge \frac{\dd p_d}{p_d}\wedge \frac{\dd x}{x}}\quad \textcolor{black}{(\text{since  } v_x\left( \frac{hp_ix^{d-i}}{p}\right)\geq 1 \text{ for } \hspace{1mm} i<d )} \\&=  \overline{ \frac{h_c }{p^{r}_c}\frac{\dd t_I}{t_I}\wedge \frac{\dd p_c}{p_c}\wedge \frac{\dd x}{x}} \quad \textcolor{black}{(\text{since } p_c=p_d)}.\end{align*}
    Therefore, \begin{align*}  s_x^*(\partial_x(\phi))= \overline{ \frac{h_c }{p^{r}_c}\frac{\dd t_I}{t_I}\wedge \frac{\dd p_c}{p_c}}= \overline{ \frac{\left( \overline{t^Js^2_{r,I,J}}\right)_c }{p^{r}_c}\frac{\dd t_I}{t_I}\wedge \frac{\dd p_c}{p_c}}= s_p^*(\partial_p(\phi)).
\end{align*}  
Taking (i), (ii), (iii) and (iv)  together, we get that $\sum\limits_{q}(s_q^*(\partial_q(\phi))=0$ for all $\phi\in U_p$, for any  monic  irreducible $p$, which proves the theorem.  
 \end{proof}

Now we are able to prove our main result, the Milnor-Scharlau exact sequence in the setting of Kato-Milne cohomology.

  \begin{thm}\label{milnor}
       Let $\mathcal{F}$ be a field of characteristic $2$  and $F=\mathcal{F}(x)$,  the rational functional field in one variable $x$ over $\mathcal{F}$. Then the following sequence is   exact:\[ 0\xrightarrow{} H_2^{m+1}(\mathcal{F})\xrightarrow{{i}} H_2^{m+1}(F)\xrightarrow{{\bigoplus\partial_p}}{ \bigoplus_{p,\frac{1}{x}}W_1(H_2^{m+1}(F_p))} \xrightarrow{\bigoplus s_p^*} H_2^m(\F)\xrightarrow{}0, \] where ${i}$ is induced by the inclusion  $\F\xrightarrow{}F$, and  $p$ varies over   $\frac{1}{x} $ and   all monic  irreducible polynomials  of $ \F[x]$.  
  \end{thm}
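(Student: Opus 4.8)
The plan is to reduce the whole statement to a single equality and then split that into two inclusions. By \cite[Theorems 4.9, 4.10]{ex} the injectivity of $i$, the exactness at $H_2^{m+1}(F)$ (so that $\ker(\oplus\partial_p)=\mathrm{Im}(i)$), and the surjectivity of $\oplus s_p^*$ onto $H_2^m(\F)$ are already known; hence it suffices to prove $\mathrm{Im}(\oplus\partial_p)=\ker(\oplus s_p^*)$. The inclusion $\mathrm{Im}(\oplus\partial_p)\subseteq\ker(\oplus s_p^*)$ is immediate from Theorem \ref{T51}, which asserts that $\sum_q s_q^*(\partial_q(\psi))=0$ for \emph{every} $\psi\in H_2^{m+1}(F)$. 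Thus the entire weight of the proof rests on the reverse inclusion $\ker(\oplus s_p^*)\subseteq\mathrm{Im}(\oplus\partial_p)$.

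For that inclusion I would take a finitely supported tuple $(\phi_p)_p$ with $\sum_p s_p^*(\phi_p)=0$ and argue by descending induction on the largest degree $d$ among the monic irreducible primes in its support, using the decomposition of Theorem \ref{T23} and the filtration of $H_2^{m+1}(F)$ by the groups $L_d$. Fix such a prime $p$ of maximal degree $d$ and write $\phi_p=w_p+\theta_p(\phi_p)\wedge\overline{\frac{\dd p}{p}}$ with $w_p\in\overline{U_p}$, as permitted by Lemma \ref{L32}(vi). First I would realize the $\theta_p$-part: expressing $\theta_p(\phi_p)\in H_2^m(\F(p))$ as a sum of symbols, lifting their entries to polynomials of degree $<d$ (which are $p$-adic units) and wedging with $\overline{\frac{\dd p}{p}}$ yields an element of $L_d$ whose $p$-residue equals $\theta_p(\phi_p)\wedge\overline{\frac{\dd p}{p}}$ modulo $\overline{U_p}$, while its remaining residues sit only at primes of degree $<d$ and at $\frac1x$. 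Subtracting this element leaves a tuple whose $p$-component lies in $\overline{U_p}$, which by Lemma \ref{L32} is itself the $p$-residue of an explicit element of $U_p\subseteq S_p$ (resp. $\tilde S_p$); subtracting that in turn disposes of the $p$-component entirely, introducing overflow only at $\frac1x$ (separable $p$) or at $x$ and $\frac1x$ (inseparable $p$).

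Because every subtracted element is a residue, Theorem \ref{T51} guarantees that the modified tuple stays in $\ker(\oplus s_p^*)$ at each step; and since all the overflow has been pushed onto primes of degree $<d$ together with $\frac1x$, the maximal degree occurring in the support strictly decreases. Iterating over all primes of top degree and then lowering $d$ clears every monic irreducible, leaving a tuple supported only at $\frac1x$. At this last stage the constraint reads $s_{\frac1x}^*(\phi_{\frac1x})=\eta(\phi_{\frac1x})=0$, and the analysis of the infinite place in \cite[Theorem 4.9]{ex}, where $\eta=s_{\frac1x}^*$, forces $\phi_{\frac1x}$ to be a residue, which closes the induction.

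The hard part will be the bookkeeping in the inductive step rather than any single identity: one must check that the realizing elements can always be chosen inside $L_d$ so that the overflow genuinely lands in degrees $<d$ and at $\frac1x$, and one must treat the inseparable primes with care, since there $s_p^*$ also sees the $I_x=1$ part of $\overline{U_p}\setminus\overline{U_p^0}$ through Definition \ref{dt}(ii). This is exactly where the explicit transfer computations of Lemmas \ref{L42} and \ref{L43}, and the reciprocity established in Lemma \ref{L44}, do the real work: they ensure that the transfer constraint is preserved under each subtraction and that the residual class at $\frac1x$ vanishes precisely when $\sum_p s_p^*(\phi_p)=0$, so that the induction terminates at zero.
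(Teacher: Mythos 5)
Your overall skeleton matches the paper's: reduce to ${\rm Im}(\oplus\partial_p)=\ker(\oplus s_p^*)$ after citing \cite[Theorems 4.9, 4.10]{ex} and \cite[Lemma 2.17]{AB} for the other exactness claims, get ${\rm Im}(\oplus\partial_p)\subseteq\ker(\oplus s_p^*)$ from Theorem \ref{T51}, and close the argument at the place $\frac{1}{x}$ via the exactness of the $\eta$-sequence. Where you genuinely diverge is in how the finite part of a tuple $(\phi_p)_p\in\ker(\oplus s_p^*)$ gets realized as a residue: you propose a descending induction on the maximal degree of the primes in the support, realizing each local component through Theorem \ref{T23} and Lemma \ref{L32}(vi) and tracking the overflow into lower degrees and $\frac{1}{x}$. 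The paper short-circuits all of this with a single citation: by \cite[Theorem 4.8]{ex} the map $\bigoplus_{p\in\mathcal{P}}\partial_p$ (sum over monic irreducibles only, excluding $\frac{1}{x}$) is already surjective, so one picks $\phi\in H_2^{m+1}(F)$ with $\bigoplus_{p\in\mathcal{P}}\partial_p(\phi)=\phi_1$ in one stroke; Theorem \ref{T51} then turns the hypothesis $\sum_q s_q^*(\phi_q)=0$ into $\eta\bigl(\partial_{\frac{1}{x}}(\phi)+\phi_{\frac{1}{x}}\bigr)=0$, and \cite[Theorem 4.9]{ex} yields $\phi'\in L_0$ with $\partial_{\frac{1}{x}}(\phi+\phi')=\phi_{\frac{1}{x}}$ while $\partial_q(\phi')=0$ at every finite $q$, so $\phi+\phi'$ hits the whole tuple. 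Your induction is in effect a re-proof of \cite[Theorem 4.8]{ex}; it is not wrong in outline, and Theorem \ref{T51} does guarantee that the transfer constraint survives each subtraction as you say, but it carries all the bookkeeping you yourself flag (confining the overflow to degrees $<d$ and to $\frac{1}{x}$, the inseparable primes, the $I_x=1$ generators of $\overline{U_p}\setminus\overline{U_p^0}$) that the citation renders unnecessary. The only content genuinely new to this paper in the reverse inclusion is the reciprocity law of Theorem \ref{T51}, which both arguments use in the same way; you would strengthen your write-up considerably by replacing the inductive machinery with the appeal to \cite[Theorem 4.8]{ex}.
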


  \begin{proof}
       Now \cite[Lemma 2.17]{AB} provides the injectivity of ${i}$. By the exactly same method of \cite[Theorem 4.10]{ex}, we can  prove that  $ \textcolor{black}{\rm ker}(\bigoplus \partial_p) ={\rm Im}({i})$.  Moreover,  \cite[Theorem 4.9]{ex} implies that      $s_{\frac{1}{x}}^*=\eta$ is onto ($\eta$ as in \cite[Theorem 4.9]{ex}). Consequently, the map $\bigoplus s_p^*$ is onto.   Now  because of {Theorem \ref{T51}}, it suffices to  prove that  \textcolor{black}{${\rm ker}\left(\bigoplus s_p^*\right) \subseteq {\rm Im}\left(\bigoplus\partial_p\right)$}.\vskip1mm

{Let ${\mathcal P}$ be the set of all irreducible monic polynomials of $\F[x]$.} Let us consider $ {\bigoplus s_p^*( \phi_1,\phi_2)=0}$, where  $\phi_1\in \bigoplus\limits_{{p\in {\mathcal P}}} W_1(H_2^{m+1}(F_p))$ and $\phi_2\in  W_1(H_2^{m+1}(F_{\frac{1}{x}})).$ 
 Therefore, we have \begin{equation}\label{M1}
        \bigoplus\limits_{{p\in {\mathcal P}}} s_p^*( \phi_1)\bigoplus s^*_{\frac{1}{x}}(\phi_2)=0.
\end{equation}
By \cite[Theorem 4.8]{ex}, there exists an element $\phi\in H_2^{m+1}(F)$ such that $\bigoplus\limits_{{p\in {\mathcal P}}} \partial_p(\phi)=\phi_1.$  
  Putting this expression of $\phi_1$ in (\ref{M1}), we get:$$\bigoplus_{{p\in {\mathcal P}}} 
s_p^*(\partial_p(\phi))\bigoplus s^*_{\frac{1}{x}}(\phi_2)=0.$$
By Theorem \ref{T51}, we have:$$s_{\frac{1}{x}}^*\left(\partial_{\frac{1}{x}}(\phi)\right)+ s_{\frac{1}{x}}^*(\phi_2)=0.$$
Since $ s_{\frac{1}{x}}^*=\eta$, it follows that$$\eta\left(\partial_{\frac{1}{x}}(\phi)+\phi_2\right)=0.$$

Moreover, \cite[Theorem 4.9]{ex} implies that  $\partial_{\frac{1}{x}}(\phi)+\phi_2= i_2(\phi')$, where $i_2$ is the restriction of $\partial_{\frac{1}{x}}$ on $L_0$, and $\phi'\in L_0$. Therefore, we have $\partial_{\frac{1}{x}}(\phi)+\phi_2=  \partial_{\frac{1}{x}}(\phi')$, and thus we get$$\partial_{\frac{1}{x}}(\phi+ \phi')=\phi_2.$$

Since $\phi'\in L_0$, \cite[Theorem 4.8]{ex} implies that  $ \bigoplus\limits_{{p\in {\mathcal P}}} \partial_p( \phi')=0.$  Hence, $ \bigoplus \partial_p( \phi+\phi')=(\phi_1,\phi_2).$   Therefore, ${\rm ker}\left(\bigoplus s_p^*\right) \subseteq {\rm Im} \left(\bigoplus\partial_p\right)$.
 Thus, the theorem is proved.
  \end{proof}

\end{sloppypar}
 \end{document}